\DeclareMathOperator*{\minimize}{minimize}
\DeclareMathOperator*{\subject}{subject\ to}
\DeclareMathOperator*{\argmin}{argmin}
\DeclareMathOperator*{\diag}{diag}
\DeclareMathOperator*{\rank}{rank}
\DeclareMathOperator*{\blkdiag}{blkdiag}
\newcounter{lem}
\newcounter{thm}
\newtheorem{lemma}[lem]{Lemma}
\newtheorem{theorem}[thm]{Theorem}
\definecolor{red}{rgb}{1,0,0}
\tikzstyle{block} = [draw, fill=white, rectangle,
\tikzstyle{sum} = [draw, fill=white, circle, node distance=1cm, minimum size = 0.3cm]
\tikzstyle{input} = [coordinate]
\tikzstyle{output} = [coordinate]
\tikzstyle{pinstyle} = [pin edge={to-,thin,black}]
\tikzstyle{block2} = [draw,minimum width = 6em, minimum height=2.8em,text centered]
\begin{document}

\title{A Distributed Primal-Dual Interior-Point Method for Loosely Coupled Problems Using ADMM} 

\author[*]{Mariette Annergren} 
\author[**]{Sina Khoshfetrat Pakazad} 
\author[**]{Anders Hansson}
\author[*]{Bo Wahlberg}

\affil[*]{Department of Automatic Control, KTH Royal Institute of Technology, SE-100 44 Stockholm, Sweden. Email:\{mariette.annergren, bo.wahlberg\}@ee.kth.se}                                         
\affil[**]{Division of Automatic Control, Department of Electrical Engineering, Link\"oping University, SE-581 83 Link\"oping, Sweden. Email: \{sina.kh.pa, hansson\}@isy.liu.se.} 

\maketitle

\begin{abstract}
In this paper we propose an efficient distributed algorithm for solving loosely coupled convex optimization problems. The algorithm is based on a primal-dual interior-point method in which we use the alternating direction method of multipliers (ADMM) to compute the primal-dual directions at each iteration of the method. This enables us to join the exceptional convergence properties of primal-dual interior-point methods with the remarkable parallelizability of ADMM. The resulting algorithm has superior computational properties with respect to ADMM directly applied to our problem. The amount of computations that needs to be conducted by each computing agent is far less. In particular, the updates for all variables can be expressed in closed form, irrespective of the type of optimization problem. The most expensive computational burden of the algorithm occur in the updates of the primal variables and can be precomputed in each iteration of the interior-point method. We verify and compare our method to ADMM in numerical experiments.
\end{abstract}

%

\section{Introduction}
We are interested in solving convex optimization problems of the form
\begin{subequations}\label{eq:EOP}
\begin{align}
\minimize_x  \quad &f_1(x)+ \dots + f_N(x),\\
\subject \quad & G^i(x) \preceq 0, \quad i=1, \dots, N, \\
  & A^i x = b^i, \hspace{6mm} i=1, \dots, N,
\end{align}
\end{subequations}
where $f_i \colon \mathbb R^{n}\rightarrow \mathbb R$, $G^i \colon \mathbb R^{n}\rightarrow \mathbb R^{m_i}$ and $A^i \in \mathbb R^{p_i \times n}$ with $p_i < n$ and $\rank(A^i) = p_i$ for all $i = 1, \dots, N$. We assume that the function pairs $f_i$, $G^i$ and their corresponding $A^i$ for $i = 1, \dots, N$, depend only on a small subset of the elements of the variable $x$ and we denote the ordered set of the indices of these variables by $J_i$. We also denote  the ordered set of indices of triplets $f_i$, $G^i$, $A^i$ that depend on $x_i$ by $\mathcal I_i$, i.e., $\mathcal{I}_i = \{k \ | \ i \in J_k\}$. An optimization problem is called loosely coupled if $|\mathcal I_i|\ll N$ for all $i = 1, \dots, n$. We can explicitly express the coupling structure in \eqref{eq:EOP} using the so-called consistency or consensus constraints.

Centralized algorithms for solving optimization problems of the form \eqref{eq:EOP} can be unviable. This can be due to lack of powerful enough centralized computational units, or because the problem cannot be formed as a centralized optimization problem due to its structural constraints, such as privacy requirements. A sensible approach for circumventing such issues is to use distributed optimization algorithms, which rely on collaboration of multiple computing agents to solve the problem. In such a setting, each agent is assigned a local subproblem, and at every iteration it solves its subproblem and communicates or collaborates with certain other agents. This is done repeatedly until the network of agents arrives or agrees on a solution.

Distributed optimization methods have been studied for many years, and there are different approaches for devising such algorithms, see e.g. \cite{ber:97,eck:89,boyd:11,ned:09,ned:10}. One of the most common approaches for designing distributed algorithms is to apply first order or proximal point methods directly to the problem or some reformulation of it. In this class of distributed algorithms, the ones based on subgradient or gradient methods are perhaps among the simplest, see e.g. \cite{ned:09,ned:10}. The local computations that need to be performed by each agent are usually elementary. However, these algorithms are very sensitive to the scaling of the problem. They also generally require many iterations to converge to a solution with even medium accuracy, \citep{ber:97}. In order to alleviate these issues there has been a surge of interest to devise distributed algorithms based on proximal point methods, e.g. see \cite{ber:97,eck:89,boyd:11,com:11}. For certain classes of problems, for instance when the objective function of the equivalent unconstrained reformulation of the problem has two terms and/or is strongly convex, such algorithms commonly enjoy better convergence properties, \citep{gol:12,gols:12} and are less sensitive to the scaling of the problem. However, they are generally more complicated in that the local computational burden is higher, and the communication protocols are more sophisticated, see e.g. \cite{sum:12,ohl:13}. Moreover, extra care must be taken if one wishes to apply proximal point methods to more general  classes of problems, as these algorithms might even diverge, see e.g. \cite{che:13}. There have been suggestions on how to modify these methods to allow application to more general problems. However, the resulting algorithms can become overly complicated to implement, particularly in a distributed fashion, \citep{gol:12,han:12,hon:12}.

Another approach for designing distributed optimization algorithms is to use second order methods, e.g. see \cite{Chu:11,wei:13,nec:09}. For instance, in \cite{nec:09} the authors propose a distributed optimization method based on an interior-point method. The introduced algorithm is obtained by first performing a Lagrangian decomposition of the problem and then efficiently solving the subproblems using interior-point methods. However, in the proposed algorithm, the computational cost for solving the subproblems can still be considerable. The authors in \cite{Chu:11} propose a distributed Newton method for solving coupled unconstrained quadratic problems, which is used for anomaly detection in large populations. This distributed method is only applicable to unconstrained quadratic problems. In \cite{wei:13} a distributed Newton method for solving a network utility maximization problem is proposed. The cost function for such problems  is given by a summation of several terms where each term depends on a single scalar variable. This structure allows the authors to employ a matrix splitting method which in turn enables them to distribute the computations of the inexact Newton directions. However, this method relies on the special structure in the considered problem and hence can only be used in particular cases.

The approach presented in the latter paper falls in the class of inexact interior-point methods which have been studied thoroughly over the past two decades, e.g. see \cite{bel:98,fre:99,miz:99,han:00,kor:00,dur:03,bel:04,zho:04,bon:05,bon:07,toh:08,lu:09,al:09,cur:10,cur:12}. These methods combine primal or primal-dual interior point methods with iterative algorithms for solving linear systems of equations. This is motivated by the fact that we need to solve a linear system of equations in every iteration of a primal or primal-dual interior-point method, in order to compute primal or primal-dual directions. These methods provide bounds on the required accuracy of the computed directions at each iteration in order to guarantee convergence. The papers \cite{fre:99} and \cite{miz:99} consider Linear Programs (LPs) and focus on the design of these accuracy bounds. In particular, they provide bounds on primal and dual residuals and computed directions to assure convergence of their respective proposed inexact interior-point method. LPs are also considered in \cite{kor:00} where the author proposes an inexact interior-point method with Quasi-Minimal Residual (QMR) technique and Conjugate Gradient (CG) as inexact solvers of choice. Also in \cite{al:09}, the authors consider LPs and they focus on devising efficient pre-conditioners for CG algorithms for solving the underlying linear equations more efficiently, so-called Preconditioned Conjugate Gradient (PCG) algorithms. An inexact primal-dual method for solving robust optimal control problems is proposed in \cite{han:00} with QMR as the iterative solver of choice. The papers \cite{bel:04} and \cite{zho:04} consider semidefinite programs and propose inexact primal-dual interior point methods for solving the problem. The inexact solvers in these papers were PCG for which they both propose efficient pre-conditioners to improve the convergence properties. They also propose similar accuracy bounds on the computed directions that depend solely on the so-called complementarity gap. In \cite{toh:08} a quadratic semi-definite program is considered where the author uses a pre-conditioned QMR algorithm and proposes efficient pre-conditioners for further improvement of its convergence rate. Inexact interior-point methods have also been used for solving constrained nonlinear systems of equations, which can be considered as Karush-Kuhn-Tucker (KKT) optimality conditions for general optimization problems (not necessarily convex). For instance \cite{bel:98} proposes an inexact interior-point method for solving constrained nonlinear monotone systems of equations, under the assumption that the Jacobian of the system of equations is invertible at the solution. The authors in \cite{dur:03} put forth a similar framework for solving general constrained nonlinear systems of equations and they use the PCG algorithm for solving them with respect to primal-dual directions. In \cite{bon:05} an inexact interior-point method for solving constrained nonlinear system of equations is proposed which uses the so-called Hestenes' multipliers method for solving the underlying linear systems of equations. The authors further investigate the numerical properties of the proposed method and compare with the case when they use PCG as the iterative solver of choice in \cite{bon:07}.

Notice that design of distributed algorithms for solving optimization problems was not the focus of any of the works discussed in the previous paragraph. In this paper, we focus on devising a distributed optimization algorithm based on a primal-dual interior-point method for solving loosely coupled optimization problems. These constitute a more general class of problems than those considered by \cite{Chu:11,wei:13,nec:09}. To this end, we first exploit the coupling in the problem using consistency constraints and use proximal splitting methods, particularly Alternating Direction Method of Multipliers (ADMM), to compute the primal-dual directions in a distributed manner.

ADMM is a method for finding saddle points of an augmented Lagrangian and, as such, a method of finding a solution of an optimization problem \citep{gab:76}. In our approach, we use ADMM to solve the KKT conditions of a particular optimization problem that has the primal-dual directions as solution, see Section \ref{sec:PDIPM}. The benefits of using ADMM are several. The ADMM iterations
\begin{itemize}
\item converge to a solution under mild assumptions \citep{boyd:11}.
\item enable the solution to be calculated in a highly distributed way, see Section \ref{distributedmethod}.
\item consist of subproblems that are extremely cheap to solve, see Section \ref{distributedmethod}.
\end{itemize}

ADMM was first introduced in \cite{mar:75} for solving nonlinear Dirichlet problems. It was presented as a modified version of Uzawa's algorithm \citep{arr:64}. The method was developed further in \cite{gab:76}, where some convergence properties were stated.  In \cite{gab:83}, it was shown that ADMM is equivalent to Douglas-Rachford splitting for monotone operators \citep{dou:56} and similar to Peaceman-Rachford splitting \citep{pea:55}. ADMM is related to the method of multipliers, also known as Hestenes' multipliers method, \citep{ hes:69,pow:69}, and the proximal point algorithm \citep{eck:92}. For a detailed overview of ADMM and other related methods, see \cite{boyd:11}.

Our proposed distributed optimization algorithm has superior computational properties than other distributed solvers, and we believe that the key to achieving this has been the use of ADMM for computing the primal-dual directions. We are not aware of any other iterative solvers that would present the same characteristics as listed above. We illustrate the performance of the proposed algorithm using a numerical experiment.

\subsection*{Contribution}
We present a novel distributed optimization algorithm for solving loosely coupled problems of the form \eqref{eq:EOP}. The algorithm is a primal-dual interior-point method where ADMM is used to calculate the search direction in a distributed fashion. We also present an inexact version of the algorithm, where the search directions are calculated with an adaptive degree of accuracy. We formulate the conditions under which the inexact algorithm converges to a solution along with a formal proof thereof. In addition, we review how ADMM relates to other methods of solving linear system of equations in general and for our problem formulation specifically.

Our method exhibits several important qualities. Specifically, the method
\begin{itemize}
\item inherits the convergence properties of the primal-dual interior-point \\method.
\item inherits the ability of ADMM to distribute calculations.
\item has cheap search direction calculations, where the most expensive computational burden can be precomputed.
\end{itemize}

\subsection*{Outline}

First we define the notation, and in Section \ref{sec:LCP} we explain the problem formulation and the structure of loose coupling. In Section \ref{sec:PDIPM} we briefly describe a primal-dual interior-point method. We apply this method to loosely coupled problems in Section \ref{sec:DPDIPM} and describe the details of how we can devise a distributed algorithm for solving such problems by using ADMM. In order to increase the efficiency of the proposed algorithm we discuss the use of inexact primal-dual directions in the algorithm in sections \ref{sec:PDIIPM} and \ref{sec:IDP}. We then provide a connection to iterative saddle point solvers in Section \ref{sec:saddle}. Moreover, to further improve the convergence properties of the algorithm, over-relaxation and scaling for the ADMM iterations are briefly discussed in Section~\ref{sec:ADMM}. We illustrate the performance of the proposed algorithm using some numerical experiments in Section~\ref{sec:NE}. Conclusions and future work are stated in Section \ref{sec:C}. In Appendix \ref{app:global}, we provide a proof of global convergence of the inexact algorithm. In Appendix \ref{sec:Appendix2}, we derive the explicit relations between ADMM and Uzawa's method, and ADMM and fixed point iterations for the considered problem formulation.
\subsection*{Notation}
The set of real numbers is denoted by $\mathbb R$. The set of real $n$-dimensional vectors and $n\times m$ matrices are denoted by $\mathbb R^{n}$ and $\mathbb R^{n\times m}$, respectively, and the transpose of a matrix $A$ is denoted by $A^T$. Let $\mathbb{N}_p$ represent the ordered set of positive integers $\{1,2,\ldots,p\}$. Given a set $J \subset \{1,2,\ldots,n\}$, the matrix $E_J \in \mathbb{R}^{|J|\times n}$ is the matrix obtained by deleting the rows indexed by $\mathbb{N}_n \setminus J$ from an identity matrix of order $n$, where $|J|$ denotes the number of elements in set $J$. Consequently, $E_Jx$ is a $|J|$-dimensional vector with the components of $x$ that correspond to the elements in $J$, and we denote this vector $x_J$. We denote by $x^{i,(l)}_k$ the $k$th element of vector $x^i$ at the $l$th iteration. Given vectors $x^i$ and matrices $A^i$ for $i= 1, \dots, N$, the column vector $(x^1, \dots, x^N)$ is all of the given vectors stacked and $\blkdiag(A^1, \dots, A^N)$ represents a block-diagonal matrix with $A^i$ as its diagonal blocks. Similarly, given a vector $x \in \mathbb R^{n}$, $\diag(x_1, \dots, x_n)$ denotes a diagonal matrix with its diagonals  expressed by elements of $x$. The vector $\hat{e}$ is a vector of ones of appropriate dimensions given by the context. The minimum value of a set or of a function is denoted by ``$\min$'' and the minimizing argument of an optimization problem in denoted by ``$\argmin$''. The inequality $x\preceq y$, where $x,y\in \mathbb R^{n}$, means $x_i\leq y_i$ for $i=1,\ldots,n$. The standard uniform distribution over interval $[a,b]$ is denoted $U(a,b)$. To simplify notation we introduce
\begin{align*}
z^{(l)}&=(x^{(l)},s^{(l)},\lambda^{(l)},v^{(l)}),\\
\Delta z&=(\Delta x,\Delta s,\Delta \lambda,\Delta v),\\
z^{i,(l)}&=(x_{J_i}^{(l)},s^{i,(l)},\lambda^{i,(l)},v^{i,(l)}).
\end{align*}
Here $x$ denotes all primal variables, $s$ is the slack variable vector, $\lambda$ is the dual variable vector corresponding to inequality constraints and $v$ is the dual variable vector corresponding to equality constraints. When formulating a loosely coupled problem, we introduce an additional primal variable $w$ and an additional dual variable $v_c$, both correspond to the consistency constraint in the coupled problem. Given $z^{(l)}$ and $\Delta z$, we also define $f^{(l)}(\alpha)$ as $f(z^{(l)}+\alpha\Delta z)$ which yields $f^{(l)}(0)=f(z^{(l)})$.
\section{Loosely Coupled Problems}\label{sec:LCP}
 The problem in \eqref{eq:EOP} can be equivalently written as
\begin{subequations}\label{eq:DDEOP}
\begin{align}
\minimize_{W, x} \quad & \bar f_1(w^1) + \dots + \bar f_N(w^N),\\
\subject \quad & \bar G^i(w^i) \preceq 0,  \quad  i = 1, \dots, N,\\
  & \bar A^i w^i = b^i, \hspace{6mm} i=1, \dots, N, \label{eq:DEOPc}\\
  & \bar E x = W,\label{eq:DEOPb}
\end{align}
\end{subequations}
where $W = (w^1, \dots, w^N)$ and $\bar{E}  = \begin{bmatrix} E_{J_1}^T   &\cdots &  E_{J_N}^T \end{bmatrix}^T$ with $E_{J_i}$ as a $0$--$1$ matrix that is obtained from an identity matrix of order $n$ by deleting the rows indexed by $\mathbb{N}_n \setminus J$. We refer to the constraints in \eqref{eq:DEOPb} as consistency constraints. The functions $\bar f_i \colon \mathbb R^{|J_i|} \rightarrow \mathbb R$ are lower dimensional descriptions of the functions $f_i$ such that $f_i(x) = \bar f_i(E_{J_i}x)$ for all $x \in \mathbb R^n$ and $i = 1, \dots, N$. In this formulation, the functions $\bar G^i \colon \mathbb R^{|J_i|} \rightarrow \mathbb R^{m_i}$ are defined in the same manner as the functions $\bar f_i$, and the matrices $\bar A^i \in \mathbb R^{p_i \times |J_i|}$ are defined by removing unnecessary columns from $A^i$. We further assume that $p_i < |J_i|$ and that $\rank (\bar A^i) = p_i$ for all $i = 1, \dots, N$. In this paper, we intend to devise algorithms to solve problems of the form in \eqref{eq:EOP} or \eqref{eq:DDEOP} in a distributed manner, and we will investigate the possibility of using primal-dual interior-point methods, both exact and inexact. To ensure global convergence of our algorithm, when using an inexact interior point method, we further make the standard assumption that the functions $f_i$ and $G^i$, in addition to being convex, have Lipschitz continuous derivatives, see section \ref{distConRes}. Next, we briefly review primal-dual interior-point methods for solving convex problems.

\section{Primal-Dual Interior-Point Methods}\label{sec:PDIPM}

Let us consider the convex optimization problem
\begin{equation}\label{eq:ConvexIneq}
\begin{split}
\minimize & \quad  F(x)\\
\subject & \quad  g_i(x) \leq 0, \quad i = 1, \dots, m, \\
&\quad  Ax = b,
\end{split}
\end{equation}
where $F \colon \mathbb R^n \rightarrow \mathbb R$, $g_i \colon \mathbb R^n \rightarrow \mathbb R$ and $A \in \mathbb R^{p \times n}$ with $p<n$ and $\rank (A) = p$. We introduce slack variables $s\in\mathbb R^m$ and reformulate \eqref{eq:ConvexIneq} as
\begin{equation}\label{eq:ConvexIneqSlack}
\begin{split}
\minimize & \quad  F(x)\\
\subject & \quad  g_i(x) +s_i= 0, \quad i = 1, \dots, m, \\
&\quad  Ax = b,\\
&\quad  s_i \geq 0,\quad i = 1, \dots, m.
\end{split}
\end{equation}
The problem in \eqref{eq:ConvexIneq} is equivalent to \eqref{eq:ConvexIneqSlack}. This means that $x$ is optimal for \eqref{eq:ConvexIneq} if and only if $(x,s)$ is optimal for \eqref{eq:ConvexIneqSlack} with $s_i=-g_i(x)$ for $i=1,\dots,m$, see \cite{boyd:04}. The KKT optimality conditions for Problem \eqref{eq:ConvexIneqSlack} can be written as
\begin{subequations}\label{eq:ConvexIneqKKTP1}
\begin{align}
\nabla F(x)  + \sum_{i = 1}^m \lambda_i\nabla g_i(x) + A^T v&= 0,\\ \lambda_i&\geq 0, \quad i= 1, \dots, m, \\ s_i&\geq 0, \quad i= 1, \dots, m, \\  \lambda_i s_i &= 0 , \quad i= 1, \dots, m,  \label{eq:ConvexIneqKKTP1-d}\\  g_i(x) +s_i&= 0, \quad i= 1, \dots, m,\\A x &= b.
\end{align}
\end{subequations}
The conditions are equivalent to those obtained for \eqref{eq:ConvexIneq} if $s_i$ is exchanged with $-g_i(x)$ for $i=1,\dots,m$. Primal-dual methods solve the problem in \eqref{eq:ConvexIneqSlack} by dealing with a sequence of modified versions of the optimality conditions in \eqref{eq:ConvexIneqKKTP1} where we perturb~\eqref{eq:ConvexIneqKKTP1-d}~as $\lambda_i s_i = \mu$ with $\mu>0$. Particularly, in a primal-dual framework and at each iteration, we get the primal and dual search directions by linearizing the perturbed KKT conditions and solving the resulting set of linear equations with respect to the search directions. The perturbed KKT conditions for \eqref{eq:ConvexIneqSlack} are
\begin{subequations}\label{eq:ConvexIneqKKTPD}
\begin{align}
\nabla F(x)  + \sum_{i = 1}^m \lambda_i\nabla g_i(x) + A^T v&= 0,\\ \lambda_i s_i &= \mu , \quad i= 1, \dots, m,  \label{eq:ConvexIneqKKTP-d}\\ g_i(x) +s_i&= 0, \quad i= 1, \dots, m,\\  A x &= b,
\end{align}
\end{subequations}
with $\lambda_i > 0$ and $s_i > 0$ for all $i =1, \dots, m$. Given the primal and dual iterates $x^{(l)}$, $s^{(l)}$, $\lambda^{(l)}$ and $v^{(l)}$ at iteration $l$ such that $\lambda_i^{(l)} > 0$ and $s_i^{(l)} > 0$ for all $i =1, \dots, m$, we linearize \eqref{eq:ConvexIneqKKTPD} which results in
\begin{subequations}\label{eq:QAppIneqKKTPD}
\begin{align}
\begin{split}
\!\left(\!\!\nabla^2 F(x^{(l)}) + \sum_{i = 1}^m \lambda_i^{(l)}\nabla^2 g_i(x^{(l)})\!\right)\!\Delta x+& \\\sum_{i = 1}^m \nabla g_i(x^{(l)})\Delta \lambda_i + A^T\Delta v &= -r^{(l)}_{\text{dual}},\end{split}\\
\label{eq:QAppIneqKKTPD1}
 -\lambda_i^{(l)} \Delta s_i +s_i^{(l)}\Delta \lambda_i &= -\left(r^{(l)}_{\text{cent}}\right)_i+\mu^{(l)},
  i= 1, \dots, m,\\
\label{eq:QAppIneqKKTPD2}
 \nabla g_i(x^{(l)})^T \Delta x +\Delta s_i&= -\left(r^{(l)}_{\text{primal,1}}\right)_i,
  i= 1, \dots, m, \\
 A \Delta x &= -r^{(l)}_{\text{primal,2}},
\end{align}
\end{subequations}
where
\begin{subequations}\label{eq:res}
\begin{align}
r_{\text{dual}}^{(l)} &= \nabla F(x^{(l)}) +  \sum_{i = 1}^m \lambda^{(l)}_i\nabla g_i(x^{(l)}) + A^T v^{(l)},\\
\left(r_{\text{cent}}^{(l)} \right)_i &= \lambda^{(l)}_i s_i^{(l)}, \quad i=1, \dots, m,\\
\left(r^{(l)}_{\text{primal,1}}\right)_i& = g_i(x^{(l)}) + s_i^{(l)}, \quad i=1, \dots, m,\\
r_{\text{primal,2}}^{(l)}& = Ax^{(l)} - b.
\end{align}
\end{subequations}
The linearized KKT conditions in~\eqref{eq:QAppIneqKKTPD} can be written in a compact form as
\begin{align}\label{eq:QAppIneqKKTPDCom}
H'(z^{(l)})\Delta z=- H(z^{(l)})+ \mu^{(l)}\begin{bmatrix}0 \\ 0\\ 0  \\\hat{e} \end{bmatrix},
\end{align}
where
\begin{align*}
H'(z^{(l)})=&\begin{bmatrix}\nabla^2 F(x^{(l)}) + \sum_{i = 1}^m \lambda_i^{(l)}\nabla^2 g_i(x^{(l)}) & 0&Dg(x^{(l)})^T & A^T \\  Dg(x^{(l)}) & I&0& 0 \\ A & 0 & 0&0\\0&\Lambda^{(l)}&S^{(l)}&0\end{bmatrix},\end{align*}
and
\begin{align*}
H(z^{(l)})=& \begin{bmatrix} r^{(l)}_{\text{dual}} \\ r^{(l)}_{\text{primal,1}}  \\ r^{(l)}_{\text{primal,2}}\\ r^{(l)}_{\text{cent}} \end{bmatrix},
\end{align*}
with
\begin{subequations}
\begin{align*}
Dg(x) &= \begin{bmatrix} \nabla g_1(x) & \dots & \nabla g_m(x) \end{bmatrix}^T\in\mathbb R^{m\times n},\\
\Lambda^{(l)}&=\diag(\lambda_1^{(l)}, \dots, \lambda_m^{(l)}) \in\mathbb R^{m\times m},\\
S^{(l)}&=\diag(s_1^{(l)}, \dots, s_m^{(l)})\in\mathbb R^{m\times m},\\
\hat{e}&=(1 \dots 1)^T \in\mathbb R^{m}.
\end{align*}
\end{subequations}
We assume that $H'(z^{(l)})$ is nonsingular, which is a standard assumption in an interior-point method. One way to solve \eqref{eq:QAppIneqKKTPDCom} is by first eliminating $\Delta s$ and $\Delta \lambda$ as
\begin{subequations}
\label{eq:PDsolution}
\begin{align}
\label{eq:PDS}
\Delta s &= -Dg(x^{(l)})\Delta x -   r^{(l)}_{\text{primal,1}},\\
\label{eq:PDLambda}
\Delta \lambda &= -(S^{(l)})^{-1}\left( \Lambda^{(l)}\Delta s -   r^{(l)}_{\text{cent}}+\mu^{(l)}\hat{e} \right).
\end{align}
\end{subequations}
We can then rewrite \eqref{eq:QAppIneqKKTPDCom} as
\begin{align}\label{eq:PD}
\begin{bmatrix} H^{(l)}_{\text{pd}} & A^T \\ A & 0 \end{bmatrix}\begin{bmatrix} \Delta x \\ \Delta v \end{bmatrix}  = - \begin{bmatrix}r^{(l)}\\ r^{(l)}_{\text{primal,2}}     \end{bmatrix}
\end{align}
where
\begin{align*}
H_{\text{pd}}^{(l)} = \nabla^2 F(x^{(l)}) + \sum_{i = 1}^m \lambda_i^{(l)}\nabla^2 g_i(x^{(l)})+ \sum_{i=1}^m \frac{\lambda_i^{(l)}}{s_i^{(l)}} \nabla g_i(x^{(l)})\nabla g_i(x^{(l)})^T,
\end{align*}
and
\begin{multline*}
r^{(l)} = r^{(l)}_{\text{dual}} + Dg(x^{(l)}) ^T(S^{(l)})^{-1} r^{(l)}_{\text{cent}}-Dg(x^{(l)}) ^T(S^{(l)})^{-1} \mu^{(l)}\hat{e}+\\Dg(x^{(l)}) ^T(S^{(l)})^{-1} \Lambda^{(l)}r^{(l)}_{\text{primal,1}}.
\end{multline*}
The key observation for our proposed algorithm is that the set of equations in \eqref{eq:PD} express the optimality conditions for the quadratic program

\begin{align}\label{eq:PDQP}
\begin{split}
\minimize & \quad \frac{1}{2}\Delta x^T H^{(l)}_{pd} \Delta x + (r^{(l)})^T \Delta x,\\
\subject & \quad A \Delta x = - r_{\text{primal,2}}^{(l)}.
\end{split}
\end{align}
Hence $\Delta x$ and $\Delta v$ can be computed through solving \eqref{eq:PDQP}. Based on the solution obtained, $\Delta s$ and $\Delta \lambda$ can be calculated using \eqref{eq:PDsolution}. With this, we lay out a primal-dual interior-point method in Algorithm \ref{alg:alg1a}.
\begin{algorithm}[tb]
\caption{Primal-Dual Interior-Point Method, \cite[]{boyd:04}.}\label{alg:alg1a}
\begin{algorithmic}[1]
\State{Given $l = 0$, $\sigma \in (0\ 1)$, $\epsilon>0$, $\epsilon_{\text{feas}}>0$, $\lambda_i^{(0)} > 0$, $s_i^{(0)} > 0$ for all $i = 1, \dots, m$ and $\hat \eta^{(0)} = \sum_{i=1}^m \lambda_i^{(0)}s_i^{(0)}$.}
\Repeat
\State{Set $\mu = \sigma \hat \eta^{(l)}/m$.}
\State{Given $\mu$, $x^{(l)}$, $s^{(l)}$,  $v^{(l)}$ and $\lambda^{(l)}$  compute $\Delta x^{(l+1)}$, $\Delta s^{(l+1)}$, $\Delta v^{(l+1)}$ and $\Delta \lambda^{(l+1)}$ by solving \eqref{eq:PD} and~\eqref{eq:PDsolution}.}
\State{Compute $\alpha^{(l+1)}$ using line search.}
\State {Update:
\begin{align*}
x^{(l+1)} &=  x^{(l)} + \alpha^{(l+1)}\Delta x^{(l+1)},\\
s^{(l+1)} &=  s^{(l)} + \alpha^{(l+1)}\Delta s^{(l+1)},\\
\lambda^{(l+1)}& =  \lambda^{(l)} + \alpha^{(l+1)}\Delta \lambda^{(l+1)},\\
v^{(l+1)} &=  v^{(l)} + \alpha^{(l+1)}\Delta v^{(l+1)},\\
l &= l + 1.
\end{align*}}
\State{Set $\hat \eta^{(l)} =  \sum_{i=1}^m \lambda_i^{(l)}s_i^{(l)}$.}
\Until{$\| (r^{(l)}_{\text{primal,1}}, r^{(l)}_{\text{primal,2}}) \|\leq \epsilon_{\text{feas}}$, $\| r^{(l)}_{\text{dual}} \| \leq \epsilon_{\text{feas}}$ and $\hat \eta^{(l)} \leq \epsilon$.}
\end{algorithmic}
\end{algorithm}
\subsubsection*{Remark 1}
\emph{We do not use \eqref{eq:QAppIneqKKTPDCom} for computing the primal-dual directions. This is because the coefficient matrix in \eqref{eq:QAppIneqKKTPDCom} is not symmetric, which limits our capability to solve \eqref{eq:QAppIneqKKTPDCom} efficiently. Instead we focus on the linear system of equations in \eqref{eq:PD}, which is sometimes referred to as the \emph{augmented system}. The structure in \eqref{eq:PD}, or equivalently in \eqref{eq:PDQP}, enables us to distribute the computations of primal-dual directions. Another approach to computing the primal-dual directions eliminates $\Delta x$ and $\Delta s$ and then solves a linear set of equations, referred to as the \emph{normal equations}, for computing $\Delta v$. This, however, generally destroys the inherent structure of the problem and inhibits us from devising distributed solutions.}
\newpage
%
\subsection{Step Size Computations}\label{sec:PDIPMstep}
We briefly review one of the ways to compute suitable step sizes to ensure convergence of the interior-point method. At each iteration, $l$, in order to have $s^{(l+1)}\succ 0$ and $\lambda^{(l+1)}\succ 0$, we first compute
\begin{align*}
\alpha_{\textrm{max}} = \textrm{min} \left\{ 1, \underset{i}{\textrm{min}} \left\{ -\lambda_i^{(l)}/\Delta \lambda_i^{(l+1)}  \ \big | \ \Delta \lambda_i^{(l+1)} < 0 \right\} \right\},
\end{align*}
and perform a backtracking line search as\\
\begin{algorithmic}
  \While{$\exists \ i  \colon s_i^{(l)} + \alpha^{(l+1)} \Delta s_i^{(l+1)} \leq 0 $}
    \State $\alpha^{(l+1)} = \beta \alpha^{(l+1)}$
  \EndWhile
\end{algorithmic}
with $\beta \in (0,1)$ and $\alpha^{(l+1)}$ initialized as $0.99 \alpha_{\textrm{max}}$. In order to guarantee convergence of primal and dual residuals to zero we continue the back tracking as
\begin{flushleft}
\begin{algorithmic}
 \While{$\left\| H^{(l)}(\alpha^{(l+1)}) \right\| > (1 - \gamma \alpha^{(l+1)})  \left\| H^{(l)}(0) \right\|$}
    \State  $\alpha^{(l+1)} = \beta \alpha^{(l+1)}$
  \EndWhile
\end{algorithmic}
\end{flushleft}
\noindent where $\gamma \in [0.01, 0.1]$. The resulting $\alpha^{(l+1)}$ ensures that the iterates remain feasible and that the norm of the KKT conditions, $\left\| H(z^{(l)}) \right\|$, is decreased consistently after each iteration, \cite{boyd:04}.
\subsubsection*{Remark 2}
\emph{The primal-dual method presented in this section is an implementation of a so-called infeasible long-step interior-point method. There are other variants of primal-dual methods, such as short-step, predictor-corrector and Mehrotra's predictor-corrector, that differ in their choice of primal-dual directions. One of the major differences among these variants is in the way they perturb the KKT conditions, i.e., the choice of $\mu$ in~\eqref{eq:ConvexIneqKKTPD}. This means that regardless of the choice of primal-dual interior point method the structure of the coefficient matrix in the resulting linear system of equations remains the same, and hence the discussions that follow can be extended to other variants of primal-dual methods.}
\\

\noindent Next, we apply the described primal-dual interior-point method to the loosely coupled problem in \eqref{eq:DDEOP} and propose a distributed algorithm for solving the problem.
%
\section{A Distributed Primal-Dual Interior-Point\\ Method For solving Loosely Coupled Problems}\label{sec:DPDIPM}
Let us now apply the primal-dual interior-point method in Algorithm \ref{alg:alg1a} to the problem in \eqref{eq:DDEOP}. As can be seen in Section \ref{sec:PDIPM}, the primal-dual directions computation is at the heart of a primal-dual interior-point method. Hence, the first step in devising a distributed primal-dual interior-point method for solving \eqref{eq:DDEOP} is to distribute the computations of these directions. To this end, we focus on the structure of \eqref{eq:PD} for the problem in~\eqref{eq:DDEOP}, which is given by
\vspace{5pt}
{
\renewcommand{\arraystretch}{1.8}
\begin{align}\label{sysOfEq}
\begin{bmatrix} \begin{array} {c:c:c:c}   \bar H^{(l)}_{\textrm{pd}} &0 & \bar A^T & I \\ \hdashline 0 & 0 & 0 & - \bar E^T \\\hdashline \bar A & 0 & 0 & 0 \\ \hdashline I & - \bar E  & 0 & 0 \end{array}\end{bmatrix} \begin{bmatrix} \begin{array}{c} \Delta w^1\\\vdots\\ \Delta w^N \\ \hdashline \Delta x \\ \hdashline \Delta v^1 \\ \vdots \\ \Delta v^N \\ \hdashline \Delta v_c  \end{array}  \end{bmatrix} = -\begin{bmatrix} \begin{array} {c} r^{1,(l)} \\ \vdots \\ r^{N,(l)} \\ \hdashline -\bar E^Tv_c^{(l)} \\ \hdashline r_{\text{primal,2}}^{1,(l)} \\ \vdots \\ r_{\text{primal,2}}^{N,(l)} \\ \hdashline \\[-17pt] r_c^{(l)} \end{array}  \end{bmatrix},
\end{align}
}%
\vspace{5pt}
where $\Delta v$ and $\Delta v_c$ are the dual variable directions for the constraints in \eqref{eq:DEOPc} and \eqref{eq:DEOPb}, respectively; $\bar H^{(l)}_{\textrm{pd}} = \blkdiag\left(H_{\text{pd}}^{1,(l)}, \dots, H_{\text{pd}}^{N,(l)}\right)$ with
\vspace{5pt}
\begin{align*}
H_{\text{pd}}^{i,(l)} =& \nabla^2 \bar f_i(w^{i,(l)}) +\sum_{j = 1}^{m_i}\! \lambda^{i,(l)}_j \nabla^2 \bar G_j^i(w^{i,(l)})+ \\ &\sum_{j=1}^{m_i} \frac{\lambda^{i,(l)}_j}{s^{i,(l)}}\nabla \bar G_j^i(w^{i,(l)})\left(\nabla \bar G_j^i(w^{i,(l)})\right)^T,
\end{align*}
\vspace{5pt}
$\bar A = \blkdiag\left(\bar A^1, \dots, \bar A^N\right)$ and
\vspace{5pt}
\begin{align*}
r^{i,(l)} =& \nabla \bar f_i (w^{i,(l)}) + \sum_{j=1}^{m_i} \lambda^{i,(l)}_j \nabla \bar G_j^i(w^{i,(l)})+ (\bar A^i)^T v^{i,(l)} +  v_c^{i,(l)}+\\&D\bar G^i(w^{i,(l)}) (S^{i,(l)})^{-1} r_{\text{cent}}^{i,(l)}-D\bar G^i(w^{i,(l)}) (S^{i,(l)})^{-1} \mu^{(l)}\hat{e}+\\&D\bar G^i(w^{i,(l)}) (S^{i,(l)})^{-1}\Lambda^{i,(l)} r_{\text{primal,1}}^{i,(l)}, \\
r_{\text{primal,2}}^{i,(l)} = &A^iw^{i,(l)} - b^i,\\
r_c^{(l)} =&  W^{(l)} - \bar E x^{(l)},
\end{align*}
\vspace{5pt}
with \begin{align*}r^{i,(l)}_{\text{cent}}& =  \Lambda^{i,(l)}s^{i,(l)},\\ r^{i,(l)}_{\text{primal,1}} &= \bar{G}^i(w^{i,(l)})+s^{i,(l)}.\end{align*}
\newpage
The system of equations in \eqref{sysOfEq} coincides with the necessary and sufficient optimality conditions for the optimization problem
\begin{subequations}\label{eq:QAppEqLogPD}
\begin{align}
\minimize_{\Delta W, \Delta x} & \quad \sum_{i = 1}^{N} \frac{1}{2}(\Delta w^i)^T H_{\text{pd}}^{i,(l)} \Delta w^i +  (r^{i,(l)})^T\Delta w^i - (v_c^{(l)})^T \bar E \Delta x,\\
\subject & \quad \bar A^i (\Delta w^i + w^{i,(l)}) = b^i, \quad i = 1, \dots, N,\label{eq:QAppEqLogb}\\
& \quad \Delta W - \bar E \Delta x = \bar E x^{(l)}-W^{(l)}. \label{eq:QAppEqLogc}
\end{align}
\end{subequations}

\noindent Note that \eqref{eq:QAppEqLogPD} has the same coupling structure as in~\eqref{eq:DDEOP} and can be solved in a distributed way. This enables us to compute the primal-dual directions in a distributed manner. In the following sections, we describe how to distribute the calculation of the search directions, perturbation parameter, step sizes and stopping criteria for the over-all method.
\subsection{Distributed Primal-Dual Direction Computations}\label{distributedmethod}
The problem in \eqref{eq:QAppEqLogPD} is of the form
\begin{equation}\label{eq:ADMM}
\begin{split}
\minimize_{\Delta W, \Delta x}  & \quad  F_1 (\Delta W) + F_2(\Delta x), \\
\subject & \quad  A\Delta  W + B\Delta x = c,
\end{split}
\end{equation}
which can be solved in a distributed fashion using proximal splitting methods, for example ADMM as described in Algorithm \ref{alg:alg2}, \cite{boyd:11}, \cite{com:11}.
\begin{algorithm}[tb]
\caption{ADMM, \cite{boyd:11}.}\label{alg:alg2}
Let $\bar{v}$ denote the scaled dual variable, that is, $\bar{v}=(1/\rho)v$.
\begin{algorithmic}[1]
\State{Given $k= 0$, $\rho > 0$, $\epsilon_{\textrm{pri}}>0$, $\epsilon_{\textrm{dual}}>0$, $x^{(0)}$ and $\bar{v}^{(0)}$.}
\Repeat
\State  $\Delta W^{(k+1)} = \argmin_{\Delta W} \left\{ F_1(\Delta W) + \frac{\rho}{2} \| A\Delta  W - B \Delta x^{(k)} - c + \bar{v}^{(k)} \|^2  \right\}$.
\State  $\Delta x^{(k+1)} = \argmin_{\Delta x} \left\{ F_2(\Delta x) + \frac{\rho}{2} \| A\Delta  W^{(k+1)} - B\Delta  x - c + \bar{v}^{(k)} \|^2  \right\}$.
\State  $\bar{v}^{(k+1)} =  \bar{v}^{(k)} + \left(A\Delta  W^{(k+1)} + B\Delta  x^{(k+1)} - c \right)$.
\State{$k = k+1$.}
\If  {$\| A\Delta W^{(k+1)} + B\Delta x^{(k+1)} - c  \|^2 < \epsilon_{\text{pri}}$ and $\| \rho A^TB(\Delta x^{(k+1)} - \Delta x^{(k)}) \|^2 < \epsilon_{\text{dual}}$.}
\State Terminate the algorithm.
\EndIf
\State{$k = k+1$.}
\Until{Algorithm is terminated.}
\end{algorithmic}
\end{algorithm}
In particular, \eqref{eq:QAppEqLogPD} can be written as
\begin{equation}\label{eq:ADMMp}
\begin{split}
\minimize_{\Delta W, \Delta x} &  \hspace{2mm} \underbrace{\sum_{i = 1}^{N}\left( \frac{1}{2}(\Delta w^i)^T H_{\text{pd}}^{i,(l)} \Delta w^i + (r^{i,(l)})^T\Delta w^i\right)}_{F_1(\Delta W)} + \underbrace{(-v_c^{(l)})^T \bar E \Delta x}_{F_2(\Delta x)},\\
\subject &  \\
& \hspace{-3mm}  \underbrace{\begin{bmatrix} \bar A^1 & 0 & \dots & 0\\ 0 & \bar A^2 & \dots & 0 \\ \vdots & \vdots & \ddots & \vdots \\ 0 & 0 & \dots & \bar A^N \\ I & 0 & \dots & 0\\ 0 & I & \dots & 0 \\ \vdots & \vdots & \ddots & \vdots \\ 0 & 0 & \dots & I \end{bmatrix}}_{A} \underbrace{\begin{bmatrix} \Delta w^1 \\ \Delta w^2 \\ \vdots \\ \Delta w^N\end{bmatrix}}_{\Delta W} + \underbrace{\begin{bmatrix} 0 \\0 \\ \vdots \\ 0 \\ -\bar E \end{bmatrix}}_{B} \Delta x = \underbrace{-\begin{bmatrix} r_{\text{primal,2}}^{1,(l)} \\ r_{\text{primal,2}}^{2,(l)} \\ \vdots \\ r_{\text{primal,2}}^{N,(l)} \\ r_c^{(l)} \end{bmatrix}}_{c}.
\end{split}
\end{equation}
Applying ADMM to \eqref{eq:ADMMp} results in the following update rules for the primal variable directions:
\begin{multline*}
\Delta W^{(k+1)} = \argmin_{\Delta W} \left\{ \sum_{i=1}^{N}  \left( \frac{1}{2}(\Delta w^i)^T H_{\text{pd}}^{i,(l)} \Delta w^i +  (r^{i,(l)})^T\Delta w^i+ \right. \right.\\ \left. \left.\frac{\rho}{2} \| \Delta w^{i} - \Delta x_{J_i}^{(k)} + r_c^{i,(l)} + \Delta \bar v_c^{i,(k)} \|^2+ \frac{\rho}{2} \|\bar A^i\Delta w^{i} + r_{\text{primal,2}}^{i,(l)} + \Delta \bar v^{i,(k)} \|^2 \right)  \right\},
\end{multline*}
with $r_c^{i,(l)} = w^{i,(l)} - x_{J_i}^{(l)}$, and
\begin{align*}
\Delta x^{(k+1)} =  \argmin_{\Delta x} \left\{ (-v_c^{(l)})^T \bar E \Delta x+   \frac{\rho}{2} \| \Delta W^{(k+1)} - \bar E \Delta x +r_c^{(l)} + \Delta \bar v_c^{(k)} \|^2 \right\},
\end{align*}
which results in
\begin{align}\label{eq:xPD1}
\Delta x^{(k+1)} =  \left( \bar E^T \bar E  \right)^{-1}\bar E^T \left(  v_c^{(l)} + \Delta W^{(k+1)} +r_c^{(l)} + \Delta \bar v_c^{(k)} \right).
\end{align}
Note that the update for $\Delta W$ is highly parallelizable and can be rewritten as
\begin{multline*}
\Delta w^{i,(k+1)} = \argmin_{\Delta w^i} \left\{  \frac{1}{2}(\Delta w^i)^T H_{\text{pd}}^{i,(l)} \Delta w^i + \right.\\ \left. (r^{i,(l)})^T\Delta w^i +  \frac{\rho}{2} \| \Delta w^{i} - \Delta x_{J_i}^{(k)} + r_c^{i,(l)} + \Delta \bar v_c^{i,(k)} \|^2+ \right.\\ \left. \frac{\rho}{2} \| \bar A^i\Delta w^{i} + r_{\text{primal,2}}^{i,(l)} + \Delta \bar v^{i,(k)} \|^2  \right\},
\end{multline*}
which in turn results in
\begin{multline}\label{eq:SPD1}
\Delta w^{i,(k+1)} = -\left[  H^{i,(l)}_{pd} + \rho I + \rho (\bar A^i)^T\bar A^i  \right]^{-1}\times\\  \left[ r^{i,(l)} + \rho \left( r_c^{i,(l)} + \Delta \bar v_c^{i,(k)} -   \Delta x_{J_i}^{(k)} \right)+  \rho (\bar A^i)^T \left( r_{\text{primal,2}}^{i,(l)} + \Delta \bar v^{i,(k)}  \right)  \right],
\end{multline}
for $i = 1, \dots, N$. By considering the update in \eqref{eq:xPD1} and the structure in matrix $\bar E$, we see that each Agent $i$ can update their corresponding elements of $\Delta x$ (i.e. $\Delta x_{J_i}$) in a distributed manner, through communication with its neighbors defined by $\text{Ne}(i) = \left\{ j \ | \ J_i\cap J_j \neq \emptyset  \right\}.$ The updates for the dual variable directions are given by
\begin{align}\label{dualDir}
\begin{split}
\Delta \bar v^{i,(k+1)} &= \Delta \bar v^{i,(k)} + \left( \bar A^i\Delta w^{i,(k+1)} + r_{\text{primal,2}}^{i,(l)}\right) ,\\
\Delta \bar v_c^{i,(k+1)} &= \Delta \bar v_c^{i,(k)} + \left( \Delta w^{i,(k+1)} - \Delta x_{J_i}^{(k+1)} + r_c^{i,(l)}\right),
\end{split}
\end{align}
for $i = 1, \dots, N$. The dual variable directions \eqref{dualDir} are scaled, \cite{boyd:11}, and they have to be rescaled to give the actual dual variable directions, that is
\begin{align*}
\Delta \bar v= (1/\rho)\Delta v,\quad \Delta \bar v_c= (1/\rho)\Delta v_c.
\end{align*}
Having computed the directions $\Delta W$, $\Delta x$, $\Delta v_c$ and $\Delta v$, we can now compute $\Delta s$ and $\Delta \lambda$ as
\begin{align}
\label{extraDir}
\begin{split}
\Delta s^i\! &=\! -D\bar{G}^i(w^{(l)})\Delta w^i\!- \!  r^{i,(l)}_{\text{primal,1}},\\
\Delta \lambda^i\! &=\! -(S^{i,(l)})^{-1}\!\left(\! \Lambda^{i,(l)}\Delta s^i\! - \!  r^{i,(l)}_{\text{cent}}+\mu^{(l)}\hat{e} \right)\!,
\end{split}
\end{align}
for $i = 1, \dots, N$.
The distributed algorithm for computing the primal-dual directions is expressed in Algorithm \ref{alg:alg4PD}.
\subsubsection*{Remark 3}
\emph{The computational effort for each iteration of Algorithm \ref{alg:alg4PD} is dominated by the cost of updating the iterates $\Delta w^{i,(k+1)}$, which requires factorizing the matrices $H^{i,(l)}_{\text{pd}} + \rho\left( I + (\bar A^i)^T\bar A^i\right)$ for $i = 1, \dots, N.$ In case $\rho$ is chosen to be a constant, these matrices remain the same within each iteration of the algorithm, and hence the computational burden of each instance of Algorithm \ref{alg:alg4PD} can be significantly reduced by pre-caching the factorizations and reusing them in the subsequent iterations. In fact, even if $\rho$ is nonconstant we can adopt a procedure that would allow us to update the factorizations of these matrices without having to recompute them entirely \emph{\cite[Sec. 4.2]{liu:13}}.}
\subsubsection*{Remark 4}
\emph{We can use other proximal splitting methods than ADMM for solving \eqref{eq:QAppEqLogPD} in a distributed way and possibly get better convergence properties. However, other proximal splitting methods generally require a reformulation of \eqref{eq:QAppEqLogPD}, which can in turn complicate the recovery of the dual variable directions, $\Delta v$ and $\Delta v_c$. In order to keep the presentation simple, we have restricted ourselves to using ADMM in this paper.}

\begin{algorithm}[H]
\caption{ADMM-Based Primal-Dual Direction Computation}\label{alg:alg4PD}
\begin{algorithmic}[1]
\State{Given $k= 0$, $\rho > 0$, $\hat{\eta}>0$, $\epsilon_{\textrm{pri}}>0$, $\epsilon_{\textrm{dual}}>0$, $s^{i}$, $\lambda^{i}$, $m_i$ for $i=1,\dots,N$, $\Delta W^{(0)}$, $\Delta \bar v^{(0)}$ and $\Delta \bar v_c^{(0)}$.}
\For{$i = 1, 2, \dots,N$}
\State {Communicate with all agents $r$ belonging to $\text{Ne}(i)$.}
\For {all $j \in J_i$}
 \State $\Delta x_j^{(0)} = \frac{1}{|\mathcal{I}_j|}\sum_{q \in \mathcal{I}_j}^{} \left( E_{J_q}^T \Delta w^{q,(0)} \right)_j$.
 \EndFor
 \EndFor
 \Repeat
\For{$i = 1, 2, \dots,N$}
\State {\vspace*{-10pt}
\begin{multline*}
\hspace{6mm}\Delta w^{i,(k+1)} = -\left[  H^{i,(l)}_{\text{pd}} + \rho\left( I + (\bar A^i)^T\bar A^i\right)  \right]^{-1}\times\\  \left[ r^{i,(l)} + \rho \left( r_c^{i,(l)} + \Delta \bar v_c^{i,(k)} -   \Delta x_{J_i}^{(k)} \right)+ \right.  \\ \left.  \rho (\bar A^i)^T \left( r_{\text{primal,2}}^{i,(l)} + \Delta \bar v^{i,(k)}  \right)  \right].
\end{multline*}}
\State {Communicate with all agents $r$ belonging to $\text{Ne}(i)$.}
\For {all $j \in J_i$}
 \State $\Delta x_j^{(k+1)} = \frac{1}{|\mathcal{I}_j|}\sum_{q \in \mathcal{I}_j}^{} \left[ E_{J_q}^T\left( \Delta w^{q,(k+1)}+  \Delta \bar v_c^{q,(k)} + v_c^{q,(l)} + r_c^{q,(l)}   \right) \right]_j. $
 \EndFor
 \State $\Delta \bar v^{i,(k+1)} = \Delta \bar v^{i,(k)} + \left( \bar A^i\Delta w^{i,(k+1)} + r_p^{i,(l)}\right)$.
  \State $\Delta \bar v_c^{i,(k+1)} = \Delta \bar v_c^{i,(k)} + \left( \Delta w^{i,(k+1)} - \Delta x_{J_i}^{(k+1)} + r_c^{i,(l)}\right)$.
\State  Check whether $\| \Delta x_{J_i}^{(k+1)}-\Delta x_{J_i}^{(k)}\|^2 \leq \epsilon_{\text{dual}}/N$, $\| \Delta w^{i,(k+1)}-\Delta x_{J_i}^{(k+1)}+r_c^{i,(l)}\|^2 \leq \epsilon_{\text{pri}}/(2N)$ and $\| \bar A^i \Delta w^{i,(k+1)} + r_{\text{primal,2}}^{i,(l)}\|^2 \leq \epsilon_{\text{pri}}/(2N)$.
\EndFor
\If  {Condition in Step (17) satisfied for all $i = 1, \dots, N$.}
\State {Terminate the algorithm.}
\EndIf
\State{$k = k+1$.}
\Until{Algorithm is terminated.}
\For{$i = 1, 2, \dots,N$}
\State{\vspace*{-10pt}\begin{align*}
\Delta s^i &= -D\bar{G}^i(w^{(l)})\Delta w^i-   r^{i,(l)}_{\text{primal,1}},\\
\Delta \lambda^i &= -(S^{i,(l)})^{-1}\left( \Lambda^{i,(l)}\Delta s^i -   r^{i,(l)}_{\text{cent}}+\mu^{(l)}\hat{e} \right).
\end{align*}}
\EndFor
 \end{algorithmic}
 \end{algorithm}
%

%
%
\subsection{Distributed Computations of Perturbation Parameter, Step Size and Stopping Criterion}\label{distStep}
In the distributed case, we set $\mu$ at iteration $l$ to
\begin{align}\label{eq:Distmu}
\mu^{(l)} = \sigma \frac{\underset{i}{\textrm{min}}\big(\hat{\eta}^{i,(l)}\big)}{\sum_{i=1}^N m_i},
\end{align}
where $\hat{\eta}^{i,(l)}=(s^{i,(l)})^T\lambda^{i,(l)}$ and $\sigma \in (0,1)$ is a user-defined constant. We can distribute the required minimum value computations using algorithms such as \emph{min-consensus}, \cite{Iut:12}. To calculate the step size in a distributed way, we need to provide an alternative representation of $H(z^{(l)})$. For this purpose, we express $\|r_{\textrm{dual}}(z^{(l)})\|^2=\sum_{i=1}^N\|r_{\textrm{dual}}^{i}(z^{i,(l)})\|^2$ with
\begin{align}\label{rdual}
r_{\textrm{dual}}^{i}(z^{i,(l)})=\nabla \bar{f}_i(w^{i,(l)})+\sum_{j=1}^{m_i}\nabla \bar{G}_j^i(w^{i,(l)})\lambda_j^i+(\bar{A}^i)^Tv^{i,(l)}+v_c^{i,(l)}.
\end{align}
 Note that the sum $-\sum_{i=1}^NE_{J_i}^Tv_c^{i,(l)}$ is not included in the expression for the residual. This is because it is always zero for each iteration of the algorithm provided that we initialize $v_c^{i,(0)}$ such that $-\sum_{i=1}^NE_{J_i}^Tv_c^{i,(0)}=0$ and choose the same step size for all subsystems. To see this, consult the derivation of why the second block in residuals vector \eqref{residual2} is always equal to zero in the proof of Theorem \ref{connectStop}. We express the primal residuals as before,
\begin{align}\label{rprimal}
\begin{split}
(r_{\textrm{primal,1}}^{i}(z^{i,(l)}),r_{\textrm{primal,2}}^{i}(z^{i,(l)}))=
\begin{bmatrix}\begin{array}{c}
\bar{G}^i(w^{i,(l)})-s^i\\
\hdashline\\
\bar{A}^iw^{i,(l)}-b^i\\
w^{i,(l)}-x_{J_i}^{(l)}\\
\end{array}\end{bmatrix}.
\end{split}
\end{align}
We are now ready to express $\|H(z^{(l)})\|^2=\sum_{i=1}^N\|H^{i}(z^{i,(l)})\|^2$ with
\begin{align}\label{H}
\begin{split}
H^{i}(z^{i,(l)})=
\begin{bmatrix}\begin{array}{c}
r_{\textrm{dual}}^{i}(z^{i,(l)})\\
(r_{\textrm{primal,1}}^{i}(z^{i,(l)}),r_{\textrm{primal,2}}^{i}(z^{i,(l)}))\\
S^{i,(l)}\Lambda^{i,(l)}\hat{e}\\
\end{array}\end{bmatrix},
\end{split}
\end{align}
and are thus able to distribute the evaluation of $\|H(z^{(l)})\|^2$. At this point, let each agent $i$ compute its local step size $\alpha^{i,(l+1)}$ using the approach described in Section \ref{sec:PDIPMstep}, that is the agent first sets
\begin{align*}
\alpha_{\textrm{max}}^i = \textrm{min} \left\{ 1, \underset{j}{\textrm{min}} \left\{ -\lambda_j^{i,(l)}/\Delta \lambda_j^{i,(l+1)} \ \big | \ \Delta \lambda_j^{i,(l+1)}<0  \right\} \right\},
\end{align*}
for $j=1, \dots, m_i$, and perform a backtracking line search as\\
\begin{algorithmic}
  \While{$\exists \ j  \colon s_j^{i,(l)} + \alpha^{i,(l+1)} \Delta s_j^{i,(l+1)} \leq 0 $}
    \State $\alpha^{i,(l+1)} = \beta \alpha^{i,(l+1)}$
  \EndWhile
\end{algorithmic}
with $\beta \in (0,1)$ and $\alpha^{i,(l+1)}$ initialized as $0.99 \alpha_{\textrm{max}}^i$. In order to guarantee convergence of the KKT conditions to zero we continue the back tracking as
\begin{flushleft}
\begin{algorithmic}
 \While{$\left\| H^{i,(l)}(\alpha^{i,(l+1)}) \right\|^2> (1 - \gamma \alpha^{i,(l+1)})^2  \left\| H^{i,(l)}(0)\right\|^2   $}
    \State  $\alpha^{i,(l+1)} = \beta \alpha^{i,(l+1)}$
  \EndWhile
\end{algorithmic}
\end{flushleft}
\noindent where $\gamma \in [0.01, 0.1]$. The resulting $\alpha^{i,(l+1)}$ ensures that the local iterates remain feasible with respect to local inequality constraints and that the norm of the local KKT residuals is decreased consistently after each iteration. Once all agents have computed their local step sizes, we then choose the global step size as the smallest one, that is
\begin{align*}
\alpha^{(l+1)} = \underset{i}{\textrm{min}}\left\{\alpha^{i,(l+1)}\right\}, \textrm{ for } i = 1,\dots,N.
\end{align*}
This then allows us to guarantee the aforementioned properties consistently for all agents. As for the perturbation parameter, the minimum value computations can be performed in a distributed fashion using algorithms such as \emph{min-consensus}, \cite{Iut:12}.

It is also possible to check the stopping criterion at each iteration in a distributed way. For this purpose we need to distribute the check of whether the primal and dual residual norms together with the centrality residual are small enough. Note that, due to~\eqref{rdual} and~\eqref{rprimal}, we have the following implications:
\begin{align*}
\|(r_{\textrm{primal,1}}^{i,(l)},r_{\textrm{primal,2}}^{i,(l)})\|^2\leq\frac{\epsilon_{\textrm{feas}}^2}{N} \textrm{ for all } i=1,\dots,N \Rightarrow &\|(r_{\textrm{primal,1}}^{(l)}r_{\textrm{primal,2}}^{(l)})\|\!\leq\! \epsilon_{\textrm{feas}},\\
\|r_{\textrm{dual}}^{i,(l)}\|^2\leq\frac{\epsilon_{\textrm{feas}}^2}{N} \textrm{ for all } i=1,\dots,N \Rightarrow& \|r_{\textrm{dual}}^{(l)}\|\leq \epsilon_{\textrm{feas}}
\end{align*}
and
\begin{align*}
\hat{\eta}^{i,(l)}=(s^{i,(l)})^T\lambda^{i,(l)}\leq\frac{\epsilon}{N}\textrm{ for all } i=1,\dots,N \Rightarrow \hat{\eta}^{(l)}\leq\epsilon.
\end{align*}
Hence, in case for all agents $i = 1, \dots, N$ we have
\begin{align*}
\|(r_{\textrm{primal,1}}^{i,(l)},r_{\textrm{primal,2}}^{i,(l)})\|^2&\leq\frac{\epsilon_{\textrm{feas}}^2}{N},\\
\|r_{\textrm{dual}}^{i,(l)}\|^2&\leq\frac{\epsilon_{\textrm{feas}}^2}{N},\\
\hat{\eta}^{i,(l)}=(s^{i,(l)})^T\lambda^{i,(l)}&\leq\frac{\epsilon}{N},
\end{align*}
then the termination condition of the primal-dual algorithm is satisfied and the algorithm is terminated. Notice that this can be performed in a distributed way and to this end, each agent would merely need to declare whether its local termination conditions are satisfied or not.

At this point we can distribute the computations of the primal-dual directions $\Delta W$, $\Delta x$, $\Delta v$, $\Delta v_c$, $\Delta s$ and $\Delta \lambda$ using Algorithm \ref{alg:alg4PD}. The algorithm can be used in Step 4 of Algorithm~\ref{alg:alg1a}. Combining algorithms~\ref{alg:alg1a} and~\ref{alg:alg4PD} with the modifications discussed above result in Algorithm~\ref{alg:algNPDexact}, which is a distributed primal-dual interior-point method for solving \eqref{eq:DDEOP}.

Notice that for Algorithm \ref{alg:algNPDexact} to function consistently, it is important that the primal-dual directions are computed accurately. This in turn can require many ADMM iterations, particularly for the first iterations of the primal-dual method. It is, however, expected that the number of ADMM iterations would decrease as we progress through the primal-dual iterations, thanks to warm-starting of the ADMM iterations as outlined in Step 10 of Algorithm \ref{alg:algNPDexact}. One way of improving the efficiency of our proposed algorithm is to incorporate the use of inexact directions. This can potentially reduce the number of required ADMM iterations for computing the primal-dual directions. Analysis of interior-point methods under inexact directions has been investigated, and such methods are referred to as inexact interior-point methods. Studying the convergence results for such methods suggests that during the first iterations of an interior-point method, when we are far away from the optimal solution, it is not necessary to compute the search directions accurately, and the accuracy requirements become more stringent as we progress through the interior-point method iterations, e.g., see \cite{han:00}, \cite{al:09}, \cite{bel:98}. This means that such methods utilize an adaptive stopping criterion for the search direction calculations, which yields higher and higher accuracy as the interior-point iterates get closer and closer to a solution.
Next we investigate the possibility of devising a distributed version of one such method.

\begin{algorithm}[tb]
\caption{Distributed Primal-Dual Interior-Point Method}\label{alg:algNPDexact}
\begin{algorithmic}[1]
\State
Given $l=0$, $\sigma \in (0,1)$, $v_c^{(0)}$ such that $\bar{E}^Tv_c^{(0)}=0$, $W^{0}$, $(s^{(0)},\lambda^{(0)})\succ 0$, $\epsilon_{\textrm{feas}}>0$, $\epsilon>0$, $\epsilon_{\textrm{pri}}>0$, $\epsilon_{\textrm{dual}}>0$, $m_i$ for $i=1,\dots,N$, $\Delta W^{(0)}$, $\Delta \bar v^{(0)}$ and $\Delta \bar v_c^{(0)}$.
\For{$i = 1, 2, \dots,N$}
\State {Communicate with all agents $r$ belonging to $\text{Ne}(i)$.}
\For {all $j \in J_i$}
 \State $x_j^{(0)} = \frac{1}{|\mathcal{I}_j|}\sum_{q \in \mathcal{I}_j}^{} \left( E_{J_q}^T  w^{q,(0)} \right)_j.$
 \EndFor
 \EndFor
\Repeat
\State{Compute $\mu^{(l)}$ as in \eqref{eq:Distmu}.}
\State{Given $\mu^{(l)}$ and $z^{(l)}$ compute $\Delta z^{(l+1)}$ using Alg. \ref{alg:alg4PD} with the initial iterates $\Delta z^{(l)}$. }
\For{$i = 1, 2, \dots,N$}
\State{Compute local step size $\alpha^{i,(l+1)}$ using the approach presented in Section \ref{distStep}.}
\EndFor
\State{Let $\alpha^{l+1}=\underset{i}{\textrm{min}}\{\alpha^{i,(l+1)}\}$.}
\For{$i = 1, 2, \dots,N$}
\State{Set $z^{i,(l+1)}=z^{i,(l)}+\alpha^{(l+1)}\Delta_z^{i,(l+1)}$.}
\EndFor
\State{Set $l=l+1$.}
\Until{$\|(r_{\textrm{primal,1}}^{i}(z^{i,(l)}),r_{\textrm{primal,2}}^{i}(z^{i,(l)}))\|^2\leq \epsilon_{\textrm{feas}}^2/N$, $\|(r_{\textrm{dual}}^{i}(z^{i,(l)})))\|^2\leq \epsilon_{\textrm{feas}}^2/N$ and $\hat{\eta}^{i,(l)}\leq \epsilon/N$ for all $i=1,\dots,N$.}
\end{algorithmic}
\end{algorithm}
\vspace{32pt}
\section{Primal-Dual Inexact Interior-Point Methods}\label{sec:PDIIPM}
\begin{algorithm}[tb]
\caption{Primal-dual Inexact Interior-point Method, \cite[]{bel:98}.}\label{alg:alg1b}
\begin{algorithmic}[1]
\State
Given $(s^{(0)},\lambda^{(0)})\succ 0$, $\bar{\tau}_1=\textrm{min}(\Lambda^{(0)}S^{(0)}\hat{e})/((\lambda^{(0)})^Ts^{(0)}/m),$ $\bar{\tau}_2=(\lambda^{(0)})^Ts^{(0)}/\|R(z^{(0)})\|,$
$\gamma^{(l-1)}\in[1/2,1)$, $\eta_{\textrm{max}}\in(0,1)$, $\beta\in(0,1)$, $\theta\in(0,1)$ and $\epsilon>0$.
\Repeat
		\State{
Choose $\sigma^{(l)}$, $\hat{\eta}^{(l)}$ and $\gamma^{(l)}\in[1/2,\gamma^{(l-1)}]$ such that $(\sigma^{(l)}+\hat{\eta}^{(l)})\in(0,\eta_{\textrm{max}})$ and
\begin{align*}
\sigma^{(l)}>\textrm{max}\left( \frac{\sqrt{m}+\bar{\tau}_1\gamma^{(l)}}{\sqrt{m}(1-\bar{\tau}_1\gamma^{(l)})},\frac{\sqrt{m}+\bar{\tau}_2\gamma^{(l)}}{m} \right)\hat{\eta}^{(l)}.
\end{align*}
Put $\mu^{(l)}=\sigma^{(l)}(s^{(l)})^T\lambda^{(l)}/m$ and $\bar{\eta}^{(l)}=\sigma^{(l)}+\hat{\eta}^{(l)}$.
		\State
Compute $\Delta z^{(l)}$ by solving \eqref{eq:QAppIneqKKTPDComInexact} with $\|\hat{r}^{(l)}\|\leq \hat{\eta}^{(l)}(s^{(l)})^T\lambda^{(l)}/m.$
		\State
Choose $\bar{\alpha}_1^{(l)}$ such that
$$\textrm{min}\left(S^{(l)}(\bar{\alpha}_1^{(l)})\Lambda^{(l)}(\bar{\alpha}_2^{(l)})\hat{e}\right)\geq\bar{\tau}_1\gamma^{(l)}\left(s^{(l)}(\bar{\alpha}_1^{(l)})\right)^T\lambda^{(l)}(\bar{\alpha}_1^{(l)})/m.$$}
\State{Choose $\bar{\alpha}_2^{(l)}$ such that
$$\left(s^{(l)}(\bar{\alpha}_2^{(l)})\right)^T\lambda^{(l)}(\bar{\alpha}_2^{(l)})\geq\bar{\tau}_2\gamma^{(l)}\|R(z^{(l)}(\bar{\alpha}_2^{(l)}))\|.$$
		\State
Set $\alpha^{(l)}=\textrm{min}(\bar{\alpha}_1^{(l)},\bar{\alpha}_2^{(l)}).$
		\State
Set $\eta^{(l)}=1-\alpha^{(l)}(1-\bar{\eta}^{(l)})$.
			\While{
$\| H^{(l)}(\alpha^{(l)}) \|> (1-\beta(1-\eta^{(l)}))\| H^{(l)}(0) \|$}
				\State{
$\alpha^{(l)}=\theta \alpha^{(l)}$ and $\eta^{(l)}=1-\theta(1-\eta^{(l)})$.}
			\EndWhile
		\State
Set $z^{(l+1)}=z^{(l)}+\alpha^{(l)}\Delta z^{(l)}$ and $l=l+1$.
	}
\Until{$\|H(z^{(l)})\|\leq \epsilon$.}
\end{algorithmic}
\end{algorithm}
In an inexact interior-point method we only need to solve \eqref{eq:QAppIneqKKTPDCom} approximately for the primal-dual directions. That is, we solve
\vspace{-5pt}
\begin{align}\label{eq:QAppIneqKKTPDComInexact}
H'(z^{(l)})\Delta z =- H(z^{(l)})+\mu^{(l)}\begin{bmatrix}0 \\ 0\\ 0  \\\hat{e} \end{bmatrix}+\hat{r}^{(l)},
\end{align}
where $\hat{r}^{(l)}$ is the residual. However, in order to assure the convergence of the algorithm, it is necessary to make modifications to the framework. Specifically, consider the framework laid out in Algorithm \ref{alg:alg1b} as introduced in \cite{bel:98}. In order to ensure a global convergent inexact interior-point method, \cite{bel:98}, \cite{bakry:96}, it is necessary to
\begin{itemize}
\item impose restrictions on the problem formulation;
\item add requirements on the residual $\hat{r}$ in \eqref{eq:QAppIneqKKTPDComInexact};
\item change the choice of $\hat{\eta}$, $\sigma$ and $\alpha$.
\end{itemize}

For example, the total residual $\tilde{r}^{(l)}$ of the KKT system must fulfill
\begin{align}
\|\tilde{r}^{(l)}\|=\left\|\mu^{(l)}\begin{bmatrix}0 \\ 0\\ 0  \\\hat{e} \end{bmatrix}+\hat{r}^{(l)}\right\|\leq(\sigma^{(l)}+\hat{\eta}^{(l)})\|H(z^{(l)})\|,
\end{align}
see \cite{bel:98}.

We define the set $\Omega(\epsilon)$ for a given $\epsilon>0$ as
\begin{align}\label{omega:set}
\begin{split}
\Omega(\epsilon)=\bigg\{&z\in \mathbb{R}^{n+2m+p}|\epsilon\leq\|H(z)\|\leq\|H(z^{(0)})\|,\\ &\textrm{min}\left(S\Lambda\hat{e}\right)\geq \frac{\bar{\tau}_1}{m}\frac{1}{2}s^T\lambda,\ s^T\lambda\geq\bar{\tau}_2\frac{1}{2}\|R(z)\|\bigg\},
\end{split}
\end{align}
with $\bar{\tau}_1=\textrm{min}(\Lambda^{(0)}S^{(0)}\hat{e})/((\lambda^{(0)})^Ts^{(0)}/m)$ and $\bar{\tau}_2=(\lambda^{(0)})^Ts^{(0)}/\|R(z^{(0)})\|,$
and the following assumptions
\begin{enumerate}
\item [A1] $H$ is continuously differentiable in $\Omega(0)$.
\item [A2] $\{z^{(l)}\}$ is bounded.
\item [A3] $H'(z)$ is nonsingular in $\Omega({\epsilon})$ with $\epsilon>0$.
\item [A4] $R'$ is Lipschitz continuous in $\Omega(0)$ with constant $L$, where $$R=(r_{\textrm{dual}},r_{\textrm{primal,1}},r_{\textrm{primal,2}}).$$
\end{enumerate}
Then if $\{z^{(l)}\}$ is generated by Algorithm \ref{alg:alg1b} and assumptions A1--A4 are fulfilled, the sequence $\{\|H(z^{(l)})\|\}$ will converge to zero and $z^{(l)}$ will converge to the limit point of $\{z^{(l)}\}$, see Theorem 3.3 in \cite{bel:98} (with the additional assumption that $\sigma^{(l)}$ is bounded away from zero). Similar to Algorithm~\ref{alg:algNPDexact}, it is also possible to distribute the computations in Algorithm \ref{alg:alg1b}, and that is discussed in the next section.

\section{A Distributed Primal-Dual Inexact Interior-Point Method for Solving Loosely Coupled Problems}\label{sec:IDP}
Let us apply the primal-dual inexact interior-point method described in Section \ref{sec:PDIIPM} to the problem in \eqref{eq:DDEOP}. Similar to Algorithm \ref{alg:algNPDexact}, we can use Algorithm \ref{alg:alg4PD} for computing the inexact directions in a distributed fashion. Particularly, this algorithm can be used in Step~4 of Algorithm~\ref{alg:alg1b}. However, in order for the computed directions to satisfy the required accuracy in Step~4 of Algorithm \ref{alg:alg1b}, we need to establish a connection between the ADMM stopping criteria and the norm of the residuals in \eqref{eq:QAppIneqKKTPDComInexact}. This connection is established in the following theorem.
\begin{theorem}\label{connectStop}
It is possible to choose the thresholds $\epsilon_{\text{pri}}, \epsilon_{\text{dual}}>0$ such that the stopping criteria in Algorithm \ref{alg:alg4PD} and the residual conditions in Step 4 of Algorithm \ref{alg:alg1b} are equivalent.
\end{theorem}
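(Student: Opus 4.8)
The plan is to write out explicitly the residual vector $\hat r^{(l)}$ of the unreduced linear system \eqref{eq:QAppIneqKKTPDComInexact} in terms of the quantities that the ADMM algorithm (Algorithm \ref{alg:alg4PD}) actually produces at termination, and then bound its norm in terms of the ADMM stopping quantities $\epsilon_{\text{pri}}$ and $\epsilon_{\text{dual}}$. First I would recall that Algorithm \ref{alg:alg4PD} solves the quadratic program \eqref{eq:QAppEqLogPD}, whose optimality conditions are precisely the system \eqref{sysOfEq}; the ADMM iterates satisfy the first-block (stationarity in $\Delta w^i$) and last-block (stationarity in $\Delta x$) optimality conditions \emph{exactly} by construction of the argmin steps \eqref{eq:SPD1} and \eqref{eq:xPD1}, while the remaining blocks — the $\bar A^i \Delta w^i = -r_{\text{primal,2}}^{i,(l)}$ rows and the consistency rows $\Delta w^i - \Delta x_{J_i} = -r_c^{i,(l)}$ — are violated by exactly the ADMM primal residual, and the stationarity rows are violated by the ADMM dual residual $\rho (\bar A^i)^T B(\Delta x^{(k+1)}-\Delta x^{(k)})$-type terms that are controlled by $\|\Delta x_{J_i}^{(k+1)}-\Delta x_{J_i}^{(k)}\|$. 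The key step is to identify which rows of $H'(z^{(l)})\Delta z + H(z^{(l)}) - \mu^{(l)}(0,0,0,\hat e)$ each ADMM quantity feeds into: I would show that $\hat r^{(l)}$ is a linear function of the three quantities appearing in Step 17 of Algorithm \ref{alg:alg4PD}, namely $\Delta x_{J_i}^{(k+1)}-\Delta x_{J_i}^{(k)}$, $\Delta w^{i,(k+1)}-\Delta x_{J_i}^{(k+1)}+r_c^{i,(l)}$, and $\bar A^i \Delta w^{i,(k+1)}+r_{\text{primal,2}}^{i,(l)}$. Then by the triangle inequality $\|\hat r^{(l)}\| \le C_1\sqrt{\epsilon_{\text{dual}}} + C_2\sqrt{\epsilon_{\text{pri}}}$ for constants $C_1, C_2$ depending on $\rho$, on $\|(\bar A^i)^T\bar A^i + I + H^{i,(l)}_{\text{pd}}\|$ and on the structure of $\bar E$, all of which are available to the agents at iteration $l$.

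Having this bound, the remaining task is just algebra: the accuracy requirement in Step 4 of Algorithm \ref{alg:alg1b} is $\|\hat r^{(l)}\| \le \hat\eta^{(l)} (s^{(l)})^T\lambda^{(l)}/m$. So I would simply set $\epsilon_{\text{pri}}$ and $\epsilon_{\text{dual}}$ so that $C_1\sqrt{\epsilon_{\text{dual}}} + C_2\sqrt{\epsilon_{\text{pri}}} \le \hat\eta^{(l)}(s^{(l)})^T\lambda^{(l)}/m$ — for instance $\epsilon_{\text{dual}} = \left(\hat\eta^{(l)}(s^{(l)})^T\lambda^{(l)}/(2mC_1)\right)^2$ and similarly for $\epsilon_{\text{pri}}$ with $C_2$. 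Since these thresholds are recomputed at each outer iteration $l$ and all ingredients ($\hat\eta^{(l)}$, $(s^{(l)})^T\lambda^{(l)}$, the constants $C_1,C_2$) are locally computable (or computable via the min-consensus / sum-consensus primitives already invoked in Section \ref{distStep}), this makes the two stopping criteria equivalent in the sense claimed. I would also need to address the side remark in Section \ref{distStep} that the block $-\sum_i E_{J_i}^T v_c^{i,(l)}$ in the residual stays identically zero: this is because ADMM's $\Delta x$-update \eqref{eq:xPD1} forces $\bar E^T$ times that block to vanish and, together with the initialization $\bar E^T v_c^{(0)} = 0$ and a common step size across agents, the quantity is preserved by the outer iteration — so it contributes nothing to $\hat r^{(l)}$ and needs no threshold.

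The main obstacle I anticipate is the bookkeeping in the first step — correctly matching the ADMM per-agent residual quantities to the block rows of the global system \eqref{sysOfEq} and getting the constants $C_1, C_2$ right, including the factor coming from $(\bar E^T\bar E)^{-1}\bar E^T$ and from the fact that $\Delta x_{J_i}$ is reconstructed by averaging over $|\mathcal I_j|$ neighbours. A subtlety is that the ADMM $\Delta w$-step uses $\Delta x_{J_i}^{(k)}$ (the previous iterate) whereas the reconstructed $\Delta x_{J_i}^{(k+1)}$ differs from it precisely by the dual-residual term, so one must be careful to express $\hat r^{(l)}$ at the \emph{terminated} iterate consistently; this is exactly why the quantity $\|\Delta x_{J_i}^{(k+1)}-\Delta x_{J_i}^{(k)}\|$ appears in the stopping test and why it controls the stationarity-row residual. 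Once that dictionary is set up correctly, the rest is a routine triangle-inequality estimate and a choice of $\epsilon_{\text{pri}}, \epsilon_{\text{dual}}$.
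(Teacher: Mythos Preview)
Your outline is correct in spirit and would yield a valid proof, but you miss a significant simplification that the paper exploits, and this leads you to an unnecessarily weak bound with spurious constants.

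The paper's key observation is that the residual vector $\hat r^{(l)}$ has \emph{orthogonal block structure}: the stationarity-in-$\Delta w^i$ rows equal exactly $\rho(\Delta x_{J_i}^{(k)}-\Delta x_{J_i}^{(k+1)})$ (this is obtained by rewriting the optimality condition of the $\Delta w$-argmin using the dual updates, as you anticipate in your ``subtlety'' paragraph), the stationarity-in-$\Delta x$ row is identically zero (as you also note), and the equality-constraint rows contain exactly $\bar A^i\Delta w^{i,(k+1)}+r_{\text{primal,2}}^{i,(l)}$ and $\Delta w^{i,(k+1)}-\Delta x_{J_i}^{(k+1)}+r_c^{i,(l)}$. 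Because these sit in \emph{disjoint coordinate blocks} of $\hat r^{(l)}$, one obtains the exact equality
\[
\|\hat r^{(l)}\|^2 \;=\; \sum_{i=1}^N\Bigl(\rho^2\|\Delta x_{J_i}^{(k)}-\Delta x_{J_i}^{(k+1)}\|^2 + \|\Delta w^{i,(k+1)}-\Delta x_{J_i}^{(k+1)}+r_c^{i,(l)}\|^2 + \|\bar A^i\Delta w^{i,(k+1)}+r_{\text{primal,2}}^{i,(l)}\|^2\Bigr),
\]
not merely an upper bound. No triangle inequality is needed, and consequently there are no constants $C_1,C_2$: in particular the residual norm has \emph{no} dependence on $\|H^{i,(l)}_{\text{pd}}\|$ or on the structure of $(\bar E^T\bar E)^{-1}\bar E^T$, contrary to what you suggest. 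The only scaling is the factor $\rho$ in front of the dual-residual term, which is simply the rescaling between scaled and unscaled dual variables.

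This exact equality is also what justifies the word ``equivalent'' in both directions; your triangle-inequality route would only deliver the implication (ADMM terminates $\Rightarrow$ residual condition holds), which suffices for the downstream use in Section~\ref{sec:diststep} but is strictly weaker than what is stated. The bookkeeping you flag as the ``main obstacle'' --- matching ADMM quantities to rows of \eqref{sysOfEq} and tracking constants from $\bar E$ and the averaging --- largely evaporates once you notice the block orthogonality: each ADMM residual lands cleanly in its own row-block with unit coefficient.
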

\begin{proof}
Note that in our approach for solving \eqref{eq:QAppIneqKKTPDCom}, we in fact solve \eqref{eq:QAppIneqKKTPD1} and~\eqref{eq:QAppIneqKKTPD2} exactly, since $\Delta s$ and $\Delta \lambda$ are eliminated, see \eqref{eq:PDsolution}. The residuals in~\eqref{eq:QAppIneqKKTPDComInexact} for our approach and for the problem in \eqref{eq:DDEOP} are therefore given as
\begin{align}\label{residual2}\hat{r}^{(l)}\!\!=\!\!
\begin{bmatrix}\begin{array}{c}H^{1,(l)}_{pd}\Delta w^1+(\bar{A}^1)^T\Delta v^1+\Delta v_c^1+r^{1(l)} \\ \vdots\\ H^{N,(l)}_{pd}\Delta w^N+(\bar{A}^N)^T\Delta v^N+\Delta v_c^N +r^{N,(l)} \\\hdashline
\sum_{i=1}^N(-E_{J_i}^T\Delta v_c-E_{J_i}^Tv_c^{(l)})\\
\hdashline \\ 0\\\vdots\\0\\\hdashline \bar{A}^1\Delta w^1+r_{\textrm{primal,2}}^{1,(l)}\\
 \vdots\\
 \bar{A}^N\Delta w^N+r_{\textrm{primal,2}}^{N,(l)}\\
\\ \Delta w^1-\Delta x_{J_1}+r_c^{1,(l)}\\ \vdots\\
\Delta w^N-\Delta x_{J_N}+r_c^{N,(l)}\\\hdashline \\ 0\\\vdots\\0\end{array}\end{bmatrix}\!\!.
\end{align}
The norm of the fourth block of the right hand side of \eqref{residual2} is already included in the stopping criteria in Algorithm \ref{alg:alg4PD}. Furthermore, in the ADMM iterations, $\Delta w^{(k+1)}$ and $\Delta v^{(k+1)}$ are computed such that
\begin{align*}
0=&\left( H^{i,(l)}_{pd} + \rho I + \rho (\bar A^i)^T\bar A^i\right)\Delta w^{i,(k+1)}+ \rho \left( r_c^{i,(l)} + \frac{1}{\rho}\Delta v_c^{i,(k)} - \Delta x_{J_i}^{(k)} \right)+\\&r^{i,(l)}+    \rho (\bar A^i)^T \left( r_{\text{primal,2}}^{i,(l)} + \frac{1}{\rho}\Delta v^{i,(k)}  \right)\\
=&\left( H^{i,(l)}_{pd}\! + \rho I + \rho (\bar A^i)^T\bar A^i\right)\Delta w^{i,(k+1)}\!+\!\rho \left( -\Delta w^{i,(k+1)}\!+\Delta x_{J_i}^{(k+1)}\!-\Delta x_{J_i}^{(k)} \right)
\!+\\& r^{i,(l)} +
\rho (\bar A^i)^T \left( \frac{1}{\rho}\Delta v^{i,(k+1)} -\bar{A}^i\Delta w^{i,(k+1)} \right),
\end{align*}
which gives
\begin{align*}
H^{i,(l)}_{pd}\Delta w^{i,(k+1)}+r^{i,(l)}+(\bar A^i)^T \Delta v^{i,(k+1)}+\Delta v_c^{i,(k+1)} =
\rho(\Delta x_{J_i}^{(k)}-\Delta x_{J_i}^{(k+1)}).
\end{align*}
Consequently, the norm of the first block of the right hand side of \eqref{residual2} would be small if and only if $\|\rho(\Delta x_{J_i}^{(k)}-\Delta x_{J_i}^{(k+1)})\|^2$  would be small, which is also included in the stopping criteria of Algorithm \ref{alg:alg4PD}. The only remaining part of the residuals vector in \eqref{residual2} is the second block which is always equal to zero. This is because, in the ADMM iterations, $\Delta x^{(k+1)}$ and $\Delta v_c^{(k+1)}$ are chosen such that
\begin{align*}
0\!=\!-\bar{E}^T\!v_c^{(l)}\!-\!\rho\bar{E}^T\!\!\left(\!\!\Delta w^{(k+1)} \!-\!\bar{E}\Delta x^{(k+1)}\!+r_c^{(l)}\!+\frac{1}{\rho}\Delta v_c^{(k)}\!\!\right)\!=\!-\bar{E}^T\!v_c^{(l)}\!-\!\bar{E}^T\!\Delta v_c^{(k+1)}.
\end{align*}
As a result, we have
\begin{multline}
\label{residualhat}
\| \hat r^{(l)}  \|^2 = \sum_{i= 1}^N \left(\|\rho(\Delta x_{J_i}^{(k)}-\Delta x_{J_i}^{(k+1)})\|^2 +\right. \\ \left. \| \Delta w^{i,(k+1)}-\Delta x_{J_i}^{(k+1)}+r_c^{i,(l)}\|^2  +\| \bar A^i \Delta w^{i,(k+1)} + r_{\text{primal,2}}^{i,(l)}\|^2\right)
\end{multline}
and hence if the thresholds $\epsilon_{\text{pri}}, \epsilon_{\text{dual}}>0$ are chosen appropriately, then the stopping criteria in Algorithm \ref{alg:alg4PD} and the residual conditions in Step 4 of Algorithm \ref{alg:alg1b} will be equivalent.
\end{proof}
Up to this point, we have illustrated that it is possible to use Algorithm \ref{alg:alg4PD} to distribute the computations of the inexact directions. Next we show how to choose the thresholds in the stopping criteria of Algorithm \ref{alg:alg4PD} so that the computed directions satisfy the necessary accuracy requirements. Moreover,  we describe how to distribute the update of the remaining iteration-dependent parameters and the remaining steps of Algorithm~\ref{alg:alg1b}.

\subsection{Distributed Computations of Perturbation Parameter, Step Size and Stop Criterion}\label{sec:diststep}
%

Let us first define $R^{i}(z^{i,(l)})$, $\sigma^{i,(l)}$ and $\hat{\eta}^{i,(l)}$ for each agent $i=1,\dots,N$. Particularly, we define $R^{i}(z^{i,(l)})$ such that $\|R(z^{(l)})\|^2=\sum_{i=1}^N\|R^{i}(z^{i,(l)})\|^2$ with
\begin{align}\label{R}
R^{i}(z^{i,(l)})=(r_{\textrm{dual}}^i(z^{i,(l)}),r_{\textrm{primal,1}}^i(z^{i,(l)}),r_{\textrm{primal,2}}^i(z^{i,(l)})),
\end{align}
and we choose $\sigma^{i,(l)}$ such that
\begin{align*}
\sigma^{i,(l)}>\bar{\tau}_2^i\gamma^{i,(l)}\hat{\eta}^{i,(l)}(s^{i,(l)})^T\lambda^{i,(l)}/\underset{i}{\textrm{min}}\left((s^{i,(l)})^T\lambda^{i,(l)}\right) + \epsilon_{\sigma},
\end{align*}
with $\bar{\tau}_2^i=(\lambda^{i,(0)})^Ts^{i,(0)}/\|R^i(z^{i,(0)})\|$, $\epsilon_{\sigma}\in (0,1)$ and $\hat{\eta}^{i,(l)}$ such that $(\sigma^{i,(l)}+\hat{\eta}^{i,(l)})\in (0,\eta_{\textrm{max}})$, $\gamma^{i,(l)}\in[1/2,\gamma^{i,(l-1)}]$ and $\gamma^{i,(0)}\in[1/2,1)$. Then choose $\hat{\eta}^{(l)} = \min_i\left\{ \hat{\eta}^{i,(l)} \right\}$, $\sigma^{(l)} = \max_i\left\{ \sigma^{i,(l)} \right\}$ and set $\bar{\eta}^{l}=\sigma^{(l)}+\hat{\eta}^{(l)}$. At this point, we can describe how to compute proper thresholds for the ADMM iterations termination criteria. In the distributed setting we set the residual norm condition in Step 4 in Algorithm~\ref{alg:alg1b} for the problem in~\eqref{eq:DDEOP} to
\begin{align}\label{stopIIP}
\|\hat{r}^{(l)}\|\leq \hat{\eta}^{(l)}\sum_{i=1}^N(s^{i,(l)})^T\lambda^{i,(l)}/m,
\end{align}
where $m = \sum_{i=1}^N m_i$ and the residual $\hat{r}^{(l)}$ is defined in \eqref{residual2}. There are several choices of $\epsilon_{\text{pri}}$ and $\epsilon_{\text{dual}}$ that ensure that criterion \eqref{stopIIP} is fulfilled when the stopping criteria of ADMM are satisfied. In this paper we set
\begin{align}\label{epsADMMInexact1}
\epsilon_{\text{pri}}^{i,(l)}=\frac{N}{2}\left(\hat{\eta}^{(l)}(s^{i,(l)})^T\lambda^{i,(l)}/m\right)^2
\end{align}
and
\begin{align}\label{epsADMMInexact2}
\epsilon_{\text{dual}}^{i,(l)}=\frac{N}{2}\left(\hat{\eta}^{(l)}(s^{i,(l)})^T\lambda^{i,(l)}/(\rho m)\right)^2.
\end{align}
Note that $\epsilon_{\text{pri}}$ and $\epsilon_{\text{dual}}$ are now subproblem-specific and they change with each iteration $l$ of the primal-dual inexact interior-point method.

Next we focus on computation of the perturbation parameter. In order to compute $\mu$ at iteration $l$, each agent first needs to compute
\begin{align*}
\mu^{i,(l)} = \sigma^{(l)}(s^{i,(l)})^T\lambda^{i,(l)}/m.
\end{align*}
Then the perturbation parameter is chosen as $\mu^{(l)} = \underset{i}{\textrm{min}} \left\{ \mu^{i,(l)} \right\}$.

It now remains to compute a proper step size for updating the iterates. As it is laid out in Algorithm \ref{alg:alg1b}, the process of computing the step size $\alpha^{(l)}$ at each iteration consists of two stages, namely, computation of an upper-bound on the step size and the line search. Similar to the approach we undertook for computing a step size in Section \ref{distStep}, each agent first needs to compute their local step size $\alpha^{i,(l)}$. To this end, each agent $i$ initially sets
\begin{align*}
\alpha^{i,(l)}=\textrm{min}\left\{\bar{\alpha}_1^{i,(l)},\bar{\alpha}_2^{i,(l)}\right\},
\end{align*}
where
\begin{align*}
\bar \alpha_j^{i,(l)}=\underset{\alpha \in [0,1]}{\textrm{max}}\left\{\alpha|f_j(\alpha')\geq 0, \textrm{ for all }\alpha'\leq\alpha\right\},
\end{align*}
with $j \in \{1, 2 \}$, and
\begin{align*}f_1(\alpha)=\textrm{min}\left(S^{i,(l)}(\alpha)\Lambda^{i,(l)}(\alpha)\hat{e}\right)-\bar{\tau}^i_1\gamma^{i,(l)}\left(s^{i,(l)}(\alpha)\right)^T\lambda^{i,(l)}(\alpha)/m_i,\end{align*}
and
$$f_2(\alpha)=\left(s^{i,(l)}(\alpha)\right)^T\lambda^{i,(l)}(\alpha)-\bar{\tau}^i_2\gamma^{i,(l)}\|R^i(z^{i,(l)}(\alpha))\|.$$
Then the agent sets $\eta^{i,(l)}=1-\alpha^{i,(l)}(1-\bar{\eta}^{(l)})$ and performs a line search as
\begin{flushleft}
\begin{algorithmic}
 \While{$\left\| H^{i,(l)}(\alpha^{i,(l)}) \right\|> (1 - \beta ( 1 -  \eta^{i,(l+1)}))  \left\| H^{i,(l)}(0)\right\|   $}
    \State  $\alpha^{i,(l)} = \theta \alpha^{i,(l)}$
    \State  $\eta^{i,(l)}=1-\theta(1-\eta^{i,(l)})$
  \EndWhile
\end{algorithmic}
\end{flushleft}
When all agents are done computing their local step sizes, we then set $\alpha^{(l)}=\underset{i}{\textrm{min}}\{\alpha^{i,(l)}\}$. Finally, similar to the approach in Section \ref{distStep}, we check that $\| H^i(z^{i,(l)}) \|^2\leq \epsilon^2/N$ for $i=1,\dots,N$, which implies that $\| H(z^{(l)}) \|\leq \epsilon$, to decide whether to terminate the primal-dual iterations or not.

\subsection{Distributed Primal-Dual Inexact Interior-Point\\ Method}

Using Algorithm \ref{alg:alg4PD} with $\epsilon_{\text{pri}}^{i,(l)}$ and $\epsilon_{\text{dual}}^{i,(l)}$ set in accordance to \eqref{epsADMMInexact1} and \eqref{epsADMMInexact2}, respectively, we can distribute the computations of the primal-dual directions $\Delta W$, $\Delta x$, $\Delta v$, $\Delta v_c$, $\Delta s$ and $\Delta \lambda$. This algorithm can then be used in Step 4 of Algorithm~\ref{alg:alg1b} which distributes the major computations in the primal-dual inexact interior-point method. Combining algorithms \ref{alg:alg4PD} and~\ref{alg:alg1b} with the modifications discussed above result in Algorithm~\ref{alg:algNPD}, which is a distributed primal-dual inexact interior-point method for solving \eqref{eq:DDEOP}.

\begin{algorithm}[tb]
\caption{Distributed Primal-dual Inexact Interior-point Method}\label{alg:algNPD}
\begin{algorithmic}[1]
\State
Given $l=0$, $\rho>0$, $v_c^{(0)}$ such that $\bar{E}^Tv_c^{(0)}=0$, $W^{(0)}$, $(s^{(0)},\lambda^{(0)})\succ 0$,
 $\bar{\tau}_1^i=\textrm{min}(\Lambda^{i,(0)}S^{i,(0)}\hat{e})/((\lambda^{i,(0)})^Ts^{i,(0)}/m_i)$, $\bar{\tau}_2^i=(\lambda^{i,(0)})^Ts^{i,(0)}/\|R^i(z^{i,(0)})\|,$ $\gamma^{i,(0)}\in[1/2,1)$, $\eta_{\textrm{max}}^i\in(0,1)$, for $i=1,\dots,N$, $\epsilon_{\textrm{feas}}>0$, $\epsilon>0$, $\beta\in(0,1)$, $\theta\in(0,1)$, $\Delta W^{(0)}$,
$\Delta \bar v^{(0)}$ and $\Delta \bar v_c^{(0)}$.
\For{$i = 1, 2, \dots,N$}
\State {Communicate with all agents $r$ belonging to $\text{Ne}(i)$.}
\For {all $j \in J_i$}
 \State $x_j^{(0)} = \frac{1}{|\mathcal{I}_j|}\sum_{q \in \mathcal{I}_j}^{} \left( E_{J_q}^T  w^{q,(0)} \right)_j$.
 \EndFor
 \EndFor
\Repeat
\State Compute $\Delta z^{(l+1)}$ using Alg. \ref{alg:alg4PD} with the initial iterates $\Delta z^{(l)}$ with the stopping criteria thresholds given in \eqref{epsADMMInexact1} and \eqref{epsADMMInexact2}.
%
\For{$i=1,\dots,N$}
\State{Compute local step size $\alpha^{i,(l+1)}$ using the approach presented in Section \ref{distStep}.}
\EndFor
\State{Set $\alpha^{(l+1)}=\underset{i}{\textrm{min}}\{\alpha^{i,(l+1)}\}$.}
\State{Set $z^{(l+1)}=z^{(l)}+\alpha^{(l+1)}\Delta z^{(l+1)}$.}
\State{Set $l=l+1$.}
%
\Until{$\| H^i(z^{i,(l)}) \|^2\leq \epsilon^2/N$ for all $i=1,\dots,N$.}
\end{algorithmic}
\end{algorithm}
%

\subsection{Distributed Convergence Result}\label{distConRes}
To ensure a global convergent inexact interior-point method, we modify the conditions stated in Section \ref{sec:PDIIPM}. In particular, we define the set $\Omega^i(\epsilon)$ for a given $\epsilon\geq0$ as
\begin{align}\label{omega:setinexact}
\begin{split}
\Omega^i(\epsilon)=\bigg\{&z^i\in \mathbb{R}^{|J_i|+2m_i+p_i}\big|\frac{\epsilon}{N}\leq\|H^i(z^i)\|\leq\|H^i(z^{i,(0)})\|,\\
&\textrm{min}\left(S^{i}\Lambda^{i}\hat{e}\right)\geq \frac{\bar{\tau}_1^i}{m_i}\frac{1}{2}(s^{i})^T\lambda^{i},\
(s^{i})^T\lambda^{i}\geq\bar{\tau}_2^i\frac{1}{2}\|R(z^{i})\|\bigg\},
\end{split}
\end{align}
with $$\bar{\tau}_1^i=\textrm{min}(\Lambda^{i,(0)}S^{i,(0)}\hat{e})/((\lambda^{i,(0)})^Ts^{i,(0)}/m_i),\ \bar{\tau}_2^i=(\lambda^{i,(0)})^Ts^{i,(0)}/\|R^i(z^{i,(0)})\|,$$ and the following assumptions
\begin{enumerate}
\item [B1] $H^i$ is continuously differentiable in $\Omega^i(0)$.
\item [B2] $\{z^{i,(l)}\}$ is bounded.
\item [B3] $H'$ is nonsingular in $\underset{i=1,\dots,N}{\prod}\Omega^i(\epsilon)$ with $\epsilon>0$.
\item [B4] $(R^i)'$ is Lipschitz continuous in $\Omega^i(0)$ with constant $L^i$, where $$R^i=(r_{\textrm{dual}}^i,r_{\textrm{primal,1}}^i,r_{\textrm{primal,2}}^i).$$
\end{enumerate}

If $\{z^{(l)}\}$ is generated by Algorithm \ref{alg:algNPD} and assumptions B1--B4 are fulfilled, then $\{\|H(z^{(l)})\|\}$ converges to zero and $z^{(l)}$ will converge to the limit point of $\{z^{(l)}\}$. For a proof of this see Appendix \ref{app:global}. Note that $\{z^{i,(l)}\}$ generated by Algorithm \ref{alg:algNPD} lies in $\Omega^i(0)$ for $i=1,\dots,N$ and all $l$.  Next we establish the connection of our proposed approach to that of iterative solvers and put forth suggestions on how to improve the convergence properties of the algorithm.
\section{Iterative Solvers for Saddle Point Systems}\label{sec:saddle}
The optimality conditions \eqref{sysOfEq} is a saddle point system where the solution, since strong duality holds, is a saddle point of the Lagrangian function of optimization problem \eqref{eq:ADMM}, see \cite{boyd:04}. In addition, a saddle point of the Lagrangian function is a saddle point of the augmented Lagrangian, and vice versa, \citep{gab:76}. Consequently, to find a solution of \eqref{sysOfEq}, we can instead consider the saddle point system corresponding to the augmented Lagrangian. The benefit of using the augmented Lagrangian is improved convergence properties when using dual methods for solving the saddle point system, \citep{arr:64,hes:69,for:83}.

\subsection{Uzawa's Method and Fixed Point Iterations}\label{sec:UMAFPI}
A well known algorithm for solving saddle point systems such as \eqref{sysOfEq} is Uzawa's method \citep{arr:64}. We solve the system of equations \eqref{sysOfEq} using ADMM (Algorithm \ref{alg:alg2}), which was originally derived as a modified version of Uzawa's method, see \citep{mar:75,gab:76}. ADMM applied to \eqref{eq:ADMM} is equivalent to Uzawa's method applied to the problem corresponding to the augmented Lagrangian of \eqref{eq:ADMM}, \citep{arr:64,hes:69,pow:69}, with one Gauss-Seidel iteration, \citep{saa:03}, in the update of the primal variables, \citep{mar:75,gab:76}. In addition, ADMM can be viewed as fixed point iterations of a pre-conditioned version of \eqref{sysOfEq}, see e.g. \cite{boy:14}. The similarities between ADMM, Uzawa's method and fixed point iterations are explored explicitly for our problem in Appendix \eqref{admom}.

Uzawa's method is also equivalent to the method of multipliers when applied to the augmented Lagrangian and the relaxation parameter in Uzawa's method is set to be equal to the penalty parameter in the method of multipliers, see \cite{ben:05}.
%
\subsection{Other Iterative Methods}
For a rigorous overview of the iterative methods available, we refer to \cite{saa:03,ben:05}. The problem in \eqref{sysOfEq} has an indefinite system matrix with an upper left block matrix that is singular, which limits the number of applicable methods or at least requires some pre-conditioning beforehand. For example, we could use the CG method applied to the normal equations of \eqref{sysOfEq}, see \cite{saa:03}. However, to the best of our knowledge, one has then destroyed the inherent structure of the problem which prevent us from distributing the calculations.

\section{Improving Convergence Rate of ADMM}\label{sec:ADMM}
We can improve the convergence rate of ADMM by using over-relaxation, warm starting the ADMM iterations, choosing the penalty parameter $\rho$ carefully and scaling the problem formulation appropriately.

In over-relaxation, we replace the primal quantity $A\Delta W^{k+1}$ with
\[\alpha_{\text{OR}} A\Delta W^{k+1}-(1-\alpha_{\text{OR}})(B \Delta x^{k}-c),\  \alpha_{\text{OR}} \in (1,2),\]
 in the update of $\Delta x^k$ and scaled dual variables $\Delta \bar{v}$ and $\Delta \bar{v}_c$. Empirical studies have shown that an $\alpha_{\text{OR}} \in [1.5,1.8]$ may improve the rate of convergence, see \citep{boyd:11}.

We can also improve the convergence rate by warm starting ADMM. That is, using the solution of the previous ADMM iteration as initial condition in the current iteration, see \citep{boyd:11}. The improvement in convergence rate is due to that the primal-dual directions of the interior point method do not change much as the iterates are approaching the solution.

In general it is an open problem how to choose the penalty parameter $\rho$ optimally. Certain heuristics suggest that $\rho$ should be chosen such that the primal and dual residuals converge at the same rate, \citep{boyd:11}. In \cite{2013arXiv1303.6680T}, the authors rescale the optimization problem using a block-diagonal matrix. For the scaled problem, they derive the $\rho$ and $\alpha_{\text{OR}}$ which guarantee the lowest worst-case amount of iterations in ADMM. The scaling matrix can be constructed in a distributed way and since it is block-diagonal it maintains the structure of our problem. However, we would have to recalculate the scaling matrix for each iteration of the interior-point method.

\section{Numerical Experiment}\label{sec:NE}
To illustrate the proposed method, we apply it to a randomly generated optimization problem of the form  \eqref{eq:DDEOP}. 

In most of the examples, we warm start the ADMM algorithm. That is, we use the previous step direction as an initial point. We have tuned the penalty parameter $\rho$ of ADMM slightly, to provide a better balance of the convergence rate between the primal residual and the dual residual of the ADMM formulation. We do not use the scaling suggested by  \cite{2013arXiv1303.6680T}, although it is believed that this will improve the convergence rate of the search direction calculations.

\subsection{Simulation Set-Up 1}\label{setup1}
We consider fifty subproblems ($N=50$). The total number of variables ($n$), equality constraints ($p$) and inequality constraints ($m$) are 5091, 5089, and 1524, respectively. The number of local variables, local equality constraints and local inequality constraints  are drawn from the standard uniform distributions $U(55,65)$, $U(7,13)$ and $U(27,33)$, respectively. The indices defining the consistency constraints ($J_i$) are drawn from $U(0,900)$ and there are 3017 such constraints.

To ensure that the problem formulation is feasible, we first draw a global variable $x$ and slack variable $S$ from $U(-10,10)$ and $U(1,10)$, respectively. We then generate equality and inequality constraints from these values. The inequality constraints are affine, $A_{in}^ix+b_{in}^i\preceq0$ for $i=1,\dots,N$, where the elements of the matrices $A_{in}^i$ for $i=1,\dots,N$ are drawn from $U(0,1)$. The vectors $b_{in}^i$ for $i=1,\dots,N$ are then calculated. The equality constraints are constructed in the same way; the matrices $\bar{A}^i$ for $i=1,\dots,N$ are drawn from $U(0,1)$. The vectors $b^i$ for $i=1,\dots,N$ are then calculated.

The objective function is quadratic in the global variable with $f_i(x)=x^TP^ix+(q^i)^Tx+e^i$ for $i=1,\dots,N$. The elements of $P^i$, $q^i$ and $e^i$ are drawn from $U(0,1)$, $U(0,1)$ and $U(0,10)$, respectively.

The optimization problem is solved using Algorithm \ref{alg:algNPDexact}, Algorithm \ref{alg:algNPD} and ADMM (Algorithm \ref{alg:alg2}). For comparison, all three algorithms are terminated using the stop criteria of  Algorithm \ref{alg:algNPDexact}. The settings of each specific method are displayed in tables \ref{alg1_param}, \ref{alg2_param} and \ref{alg3_param}. 

We choose a relative value of the upper bounds on the stop criteria, $\epsilon$ and $\epsilon_{\textrm{feas}}$. That is, we choose them as

\begin{align*}
\epsilon=\epsilon_{\textrm{feas}}=10^{-6}\times\textrm{max}\{&1,\|\blkdiag(P^1, \dots,P^N)\|,\|\blkdiag(A_{in}^1, \dots, A_{in}^N)\|,\\
&\|\blkdiag(A^1, \dots,A^N)\|,\|(b_{in}^1,\dots,b_{in}^N)\|,\\
&\|(b^1,\dots,b^N)\|,\|(q^1,\dots,q^N)\| \}.
\end{align*}
For our specific problem generation we get $\epsilon=\epsilon_{\textrm{feas}}=0.0051$, that is we have a scaling factor equal to 5089.

We initialize the methods at the same point. The initial values of the global primal variable are drawn from $U(-10,10)$, the initial values of each local primal variable are given by the consistency constraints. The dual variables are all set to 10, except the dual variables that correspond to the consistency constraint, they are all set to zero. The search directions are initialized to zero if they are not warm started. For the approach using ADMM, we use indicator functions to represent all constraints except consensus. Consequently, we only have dual variables for the consistency constraints. When using ADMM on Problem \eqref{eq:DDEOP}, we get an inequality constrained optimization problem in the first primal variable update which we use \texttt{cvx}, a package for specifying and solving convex programs \citep{cvx}, to solve.

\begin{table}[h!]
\caption{Settings of Algorithm \ref{alg:algNPDexact}.}
\begin{center}
{\begin{tabular}{llllllll}
\hline
Parameter&$\mu$   &$\alpha$ &$\beta$ &$\epsilon_{\text{pri}}$&$\epsilon_{\text{dual}}$&$\rho$ &$\alpha_{\text{OR}}$ \\
 Value & 15 & 0.01 &0.5 &$50/2\times10^{-20}$&$50/2\times10^{-20}$ & 0.5 &1\\
\hline
\end{tabular}}
\end{center}
\label{alg1_param}
\end{table}

\begin{table}[h!]
\caption{Settings of Algorithm \ref{alg:algNPD}.}
\begin{center}
{\begin{tabular}{llllllll}
\hline
Parameter&$\eta_{\textrm{max}}^i$   &$\gamma^{i,(0)}$ &$\beta$ & $\theta$&$\rho$ &$\alpha_{\text{OR}}$ & $\epsilon_{\sigma}$ \\
 Value & 0.9 & 0.9 &0.1 &0.95& 0.5 &1 & 0.1\\
\hline
\end{tabular}}
\end{center}
\label{alg2_param}
\end{table}

\begin{table}[h!]
\caption{Settings of ADMM.}
\begin{center}
{\begin{tabular}{lll}
\hline
Parameter&$\rho$ &$\alpha_{\text{OR}}$ \\
 Value& 0.5 &1\\
\hline
\end{tabular}}
\end{center}
\label{alg3_param}
\end{table}

\subsubsection{Algorithm \ref{alg:algNPD} with and without Warm Starting}
We first compare Algorithm \ref{alg:algNPD} with and without warm starting. The total number of ADMM iterations is 17453 with warm starting and 21181 without warm starting. In Figure \ref{fig:iterADMMEx1}, the number of ADMM iterations in each instance of the primal-dual method is displayed. In Figure \ref{fig:pertADMMEx1}, the value of the perturbation parameter $\mu$ in each instance of the primal-dual method is shown. We benefit from warm staring ADMM with respect to the total number of ADMM iterations necessary for the primal-dual method to converge. However, for a specific iteration of the primal-dual method the warm started approach can require more iterations than the approach without warm starting, and the number of saved ADMM iterations fluctuate over the primal-dual method iterations. A smaller saving, or even no saving at all, occur when the new system of equations to solve is changed considerably from the previous one. Note, also that the number of iterations of the primal-dual method is affected. The number of iterations is 80 with warm starting and 81 without warm starting.

\begin{figure}[h!]
\begin{center}
 \centering
\pgfplotsset{width=numberOfIterationsADMMEx1.tikz\columnwidth,height=numberOfIterationsADMMEx1.tikz\columnwidth,compat=newest,plot coordinates/math parser=false}
%
%
\begin{tikzpicture}

\begin{axis}[%
scale only axis,
width=8cm,
height=5cm,
xmin=0, xmax=81,
ymin=0, ymax=550,
xlabel={iteration in primal-dual method},
ylabel={number of iterations in ADMM},
axis on top]
\addplot [
color=black,
solid
]
coordinates{
 (1,350)(2,84)(3,71)(4,102)(5,169)(6,260)(7,267)(8,213)(9,176)(10,152)(11,144)(12,138)(13,138)(14,145)(15,145)(16,145)(17,145)(18,144)(19,144)(20,144)(21,143)(22,143)(23,143)(24,143)(25,143)(26,142)(27,142)(28,142)(29,142)(30,143)(31,143)(32,143)(33,143)(34,144)(35,145)(36,145)(37,146)(38,147)(39,148)(40,150)(41,151)(42,153)(43,155)(44,157)(45,159)(46,163)(47,168)(48,172)(49,178)(50,183)(51,188)(52,193)(53,199)(54,204)(55,210)(56,215)(57,222)(58,232)(59,242)(60,253)(61,265)(62,263)(63,307)(64,363)(65,365)(66,366)(67,367)(68,368)(69,369)(70,369)(71,373)(72,377)(73,382)(74,388)(75,397)(76,407)(77,420)(78,435)(79,453)(80,466) 
};\label{solid}

\addplot [
color=black,
dashed
]
coordinates{
 (1,350)(2,297)(3,242)(4,151)(5,120)(6,239)(7,302)(8,279)(9,246)(10,215)(11,193)(12,180)(13,173)(14,172)(15,172)(16,171)(17,171)(18,170)(19,170)(20,169)(21,169)(22,169)(23,168)(24,168)(25,168)(26,168)(27,168)(28,168)(29,168)(30,168)(31,169)(32,169)(33,170)(34,170)(35,171)(36,172)(37,174)(38,175)(39,177)(40,178)(41,180)(42,183)(43,185)(44,188)(45,192)(46,197)(47,202)(48,208)(49,213)(50,219)(51,225)(52,232)(53,238)(54,244)(55,252)(56,260)(57,272)(58,285)(59,300)(60,317)(61,347)(62,404)(63,406)(64,408)(65,410)(66,412)(67,414)(68,416)(69,418)(70,420)(71,422)(72,423)(73,425)(74,426)(75,429)(76,435)(77,443)(78,455)(79,470)(80,492)(81,525) 
};\label{dashed}

\end{axis}
\end{tikzpicture}
\caption{ADMM iterations. The number of iterations of ADMM with warm starting \eqref{solid} and without warm starting \eqref{dashed} in each instance of the primal-dual method are shown.} \label{fig:iterADMMEx1}
\end{center}
\end{figure}
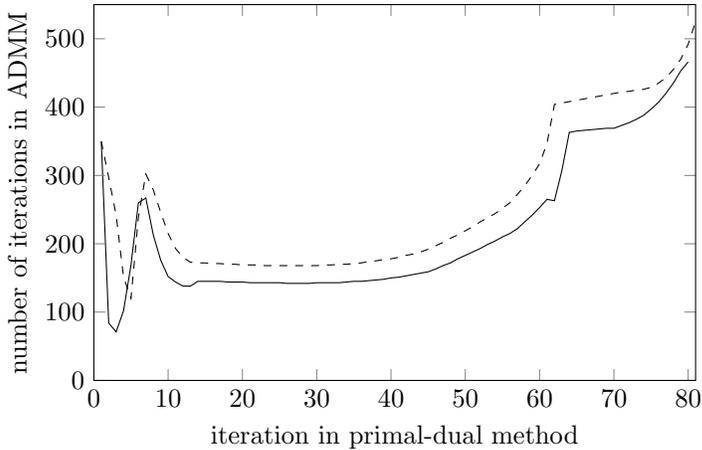
\begin{figure}[h!]
\begin{center}
 \centering
\pgfplotsset{width=perturbationADMMDistEx1.tikz\columnwidth,height=perturbationADMMDistEx1.tikz\columnwidth,compat=newest,plot coordinates/math parser=false}
%
%
\begin{tikzpicture}

\begin{axis}[%
scale only axis,
width=8cm,
height=5cm,
xmin=0, xmax=81,
ymin=-8, ymax=1,
xlabel={iteration in primal-dual method},
ylabel={$\textrm{log}(\mu)$},
axis on top]
\addplot [
color=black,
solid
]
coordinates{
 (1,0.0915945)(2,-0.0615819)(3,-0.181373)(4,-0.317885)(5,-0.479094)(6,-0.673555)(7,-0.909713)(8,-1.18404)(9,-1.38405)(10,-1.58736)(11,-1.76305)(12,-1.91597)(13,-2.02914)(14,-2.05283)(15,-2.06619)(16,-2.07954)(17,-2.09318)(18,-2.10723)(19,-2.12158)(20,-2.13622)(21,-2.15132)(22,-2.16674)(23,-2.18263)(24,-2.19885)(25,-2.21557)(26,-2.23282)(27,-2.25043)(28,-2.26859)(29,-2.28731)(30,-2.30662)(31,-2.32634)(32,-2.34688)(33,-2.36785)(34,-2.3897)(35,-2.41224)(36,-2.43549)(37,-2.45972)(38,-2.48471)(39,-2.51077)(40,-2.53764)(41,-2.56595)(42,-2.59516)(43,-2.62593)(44,-2.65801)(45,-2.69146)(46,-2.72671)(47,-2.76424)(48,-2.80337)(49,-2.84549)(50,-2.8899)(51,-2.9377)(52,-2.9892)(53,-3.04468)(54,-3.10577)(55,-3.17308)(56,-3.24812)(57,-3.33279)(58,-3.43172)(59,-3.55034)(60,-3.70019)(61,-3.91182)(62,-4.34093)(63,-4.99106)(64,-5.05208)(65,-5.11444)(66,-5.1817)(67,-5.25428)(68,-5.33264)(69,-5.41729)(70,-5.50981)(71,-5.60877)(72,-5.71588)(73,-5.83325)(74,-5.96201)(75,-6.10179)(76,-6.25926)(77,-6.43935)(78,-6.65423)(79,-6.93513)(80,-7.39298) 
};

\addplot [
color=black,
dashed
]
coordinates{
 (1,0.0915945)(2,-0.0615819)(3,-0.182489)(4,-0.319006)(5,-0.47865)(6,-0.672926)(7,-0.908962)(8,-1.17981)(9,-1.38196)(10,-1.58294)(11,-1.77459)(12,-1.92739)(13,-2.04186)(14,-2.06302)(15,-2.07638)(16,-2.09001)(17,-2.10393)(18,-2.11813)(19,-2.13262)(20,-2.14757)(21,-2.16298)(22,-2.17871)(23,-2.19476)(24,-2.21132)(25,-2.22839)(26,-2.24582)(27,-2.26379)(28,-2.28232)(29,-2.30144)(30,-2.32095)(31,-2.34128)(32,-2.36203)(33,-2.38366)(34,-2.40597)(35,-2.42898)(36,-2.45271)(37,-2.47745)(38,-2.50297)(39,-2.52957)(40,-2.5573)(41,-2.5862)(42,-2.61634)(43,-2.64776)(44,-2.68086)(45,-2.71537)(46,-2.75212)(47,-2.79045)(48,-2.83126)(49,-2.87473)(50,-2.92154)(51,-2.97142)(52,-3.02515)(53,-3.08368)(54,-3.14814)(55,-3.21998)(56,-3.301)(57,-3.39455)(58,-3.50406)(59,-3.64039)(60,-3.82233)(61,-4.12507)(62,-4.76866)(63,-4.81849)(64,-4.86676)(65,-4.91876)(66,-4.9748)(67,-5.0352)(68,-5.10034)(69,-5.17217)(70,-5.24971)(71,-5.3344)(72,-5.42697)(73,-5.52824)(74,-5.64041)(75,-5.7634)(76,-5.90001)(77,-6.05385)(78,-6.22756)(79,-6.42935)(80,-6.68138)(81,-7.04102) 
};

\end{axis}
\end{tikzpicture}
\caption{Perturbation of KKT conditions.  The value of the perturbation parameter $\mu$ with warm starting ADMM \eqref{solid} and without warm starting \eqref{dashed} ADMM in each instance of the primal-dual method are shown.} \label{fig:pertADMMEx1}
\end{center}
\end{figure}
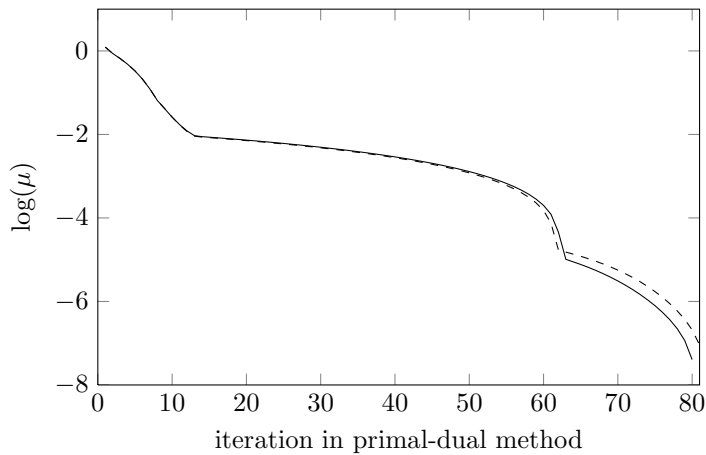

\subsubsection{Savings in ADMM Iterations Using Algorithm \ref{alg:algNPD} Compared to Algorithm \ref{alg:algNPDexact}}
We compare Algorithm \ref{alg:algNPDexact} and Algorithm \ref{alg:algNPD}, using warm starting in both methods. The total number of ADMM iterations is 38796 with Algorithm \ref{alg:algNPDexact} and 17453 with Algorithm \ref{alg:algNPD}. The number of iterations of the primal-dual method is 21 using Algorithm \ref{alg:algNPDexact} and 80 using Algorithm \ref{alg:algNPD}. We save 55\% in ADMM iterations, using Algorithm \ref{alg:algNPD}. This is due to the adaptive stop criteria of the search direction calculations. In particular, it dramatically decreases the amount of ADMM iterations necessary in the first couple of iterations of the primal-dual method. The convergence rate of Algorithm \ref{alg:algNPD} is highly dependent on the setting of the parameters in the method. With a different setting, we could get a much worse convergence rate.

\subsubsection{Stop Criteria}
We compare how the residuals and surrogate duality gap evolves between the three methods. We see in figures \ref{fig:primalResADMMEx1}, \ref{fig:dualResADMMEx1} and \ref{fig:surADMMEx1}, that the averaged, over all subproblems, value of the norm of local primal residuals, dual residuals and surrogate duality gaps are constantly decreasing as each method iterates. The same behavior is obtained for the global residuals and surrogate duality gap, see figures \ref{fig:globprimalResADMMEx1}, \ref{fig:dualResADMMEx1} and \ref{fig:surADMMEx1}. In addition, we can see that the stop criteria are fulfilled for all subproblems at approximately the same iteration. When comparing the local stop criteria with the global, we see that the local criteria can be conservative. In fact, if the global stop criteria were to be used instead of the local one, ADMM would terminate at an earlier iteration than with the local criteria, see Table \ref{alg_residuals} and figures \ref{fig:primalResADMMEx1}--\ref{fig:globsurADMMEx1}.

\begin{table}[h!]
\caption{Fulfillment of stop criteria.}
\begin{center}
{\begin{tabular}{llll}
\hline
&iteration $l$ at which&iteration $l$ at which&iteration $l$ at which\\
& local (global) primal& local (global) dual&local (global) gap\\
&constraint is ful-&constraint is ful-&constraint is ful-\\
&filled&filled&filled\\
\hline
 Alg. \ref{alg:algNPDexact} & 19 (18) &19 (20)& 21 (21)\\
 Alg. \ref{alg:algNPD} & 72 (68) & 75 (75)& 80 (80)\\
 ADMM & 304 (275)& 187 (169)& 0 (0)\\
\hline
\end{tabular}}
\end{center}
\label{alg_residuals}
\end{table}

\begin{figure}[h!]
\begin{center}
\begin{subfigure}
 \centering
\pgfplotsset{width=primalresidualADMMOrigEx1.tikz\columnwidth,height=primalresidualADMMOrigEx1.tikz\columnwidth,compat=newest,plot coordinates/math parser=false}
%
%
\begin{tikzpicture}

\begin{axis}[%
scale only axis,
width=3cm,
height=4cm,
xmin=1, xmax=21,
ymin=-15, ymax=5,
title={(a)},
xlabel={iteration in primal-dual method},
ylabel={$\frac{1}{N}\sum_{i=1}^N\textrm{log}\left(\|(r_{\textrm{primal},1}^i,r_{\textrm{primal},2}^i)\|\right)$},
axis on top]
\addplot [
color=black,
solid
]
plot [error bars/.cd, y dir = both, y explicit]
coordinates{
 (1,3.80306)+-(0.0,0.399865)(2,2.70842)+-(0.0,0.399865)(3,1.43506)+-(0.0,0.399865)(4,0.71455)+-(0.0,0.399865)(5,0.236691)+-(0.0,0.399865)(6,-0.489446)+-(0.0,0.399865)(7,-0.836469)+-(0.0,0.399865)(8,-1.20732)+-(0.0,0.399865)(9,-1.59077)+-(0.0,0.399865)(10,-1.76959)+-(0.0,0.399865)(11,-2.19017)+-(0.0,0.399865)(12,-2.64655)+-(0.0,0.399865)(13,-3.12916)+-(0.0,0.399865)(14,-3.5942)+-(0.0,0.399865)(15,-4.06897)+-(0.0,0.399865)(16,-4.54784)+-(0.0,0.399865)(17,-6.19896)+-(0.0,0.399865)(18,-6.76412)+-(0.0,0.399865)(19,-8.9467)+-(0.0,0.399865)(20,-11.4113)+-(0.0,0.399865)(21,-14.599)+-(0.0,0.399872) 
};

\addplot [
color=black,
dashed
]
coordinates{
 (1,-6.28571)(2,-6.28571)(3,-6.28571)(4,-6.28571)(5,-6.28571)(6,-6.28571)(7,-6.28571)(8,-6.28571)(9,-6.28571)(10,-6.28571)(11,-6.28571)(12,-6.28571)(13,-6.28571)(14,-6.28571)(15,-6.28571)(16,-6.28571)(17,-6.28571)(18,-6.28571)(19,-6.28571)(20,-6.28571)(21,-6.28571) 
};

\end{axis}
\end{tikzpicture}
 \end{subfigure}
\begin{subfigure}
 \centering
\pgfplotsset{width=primalresidualADMMDistEx1.tikz\columnwidth,height=primalresidualADMMDistEx1.tikz\columnwidth,compat=newest,plot coordinates/math parser=false}
%
%
\begin{tikzpicture}

\begin{axis}[%
scale only axis,
width=3cm,
height=4cm,
xmin=1, xmax=80,
ymin=-15, ymax=5,
title={(b)},
xlabel={iteration in primal-dual method},
ylabel={$\frac{1}{N}\sum_{i=1}^N\textrm{log}\left(\|(r_{\textrm{primal},1}^i,r_{\textrm{primal},2}^i)\|\right)$},
axis on top]
\addplot [
color=black,
solid
]
plot [error bars/.cd, y dir = both, y explicit]
coordinates{
 (1,4.25326)+-(0.0,0.399864)(2,4.04306)+-(0.0,0.399865)(3,3.79617)+-(0.0,0.399866)(4,3.49437)+-(0.0,0.399867)(5,3.11789)+-(0.0,0.399868)(6,2.64884)+-(0.0,0.399869)(7,2.09618)+-(0.0,0.399868)(8,1.71493)+-(0.0,0.399867)(9,1.32883)+-(0.0,0.399865)(10,1.0043)+-(0.0,0.399863)(11,0.706107)+-(0.0,0.399861)(12,0.473098)+-(0.0,0.399859)(13,0.425155)+-(0.0,0.399858)(14,0.398172)+-(0.0,0.399858)(15,0.371188)+-(0.0,0.399858)(16,0.343648)+-(0.0,0.399858)(17,0.315252)+-(0.0,0.399858)(18,0.286269)+-(0.0,0.399857)(19,0.256687)+-(0.0,0.399857)(20,0.226183)+-(0.0,0.399857)(21,0.19505)+-(0.0,0.399857)(22,0.162945)+-(0.0,0.399857)(23,0.130176)+-(0.0,0.399856)(24,0.096384)+-(0.0,0.399856)(25,0.0615367)+-(0.0,0.399856)(26,0.0259672)+-(0.0,0.399856)(27,-0.0107143)+-(0.0,0.399855)(28,-0.0485436)+-(0.0,0.399855)(29,-0.0875571)+-(0.0,0.399855)(30,-0.127381)+-(0.0,0.399855)(31,-0.168879)+-(0.0,0.399854)(32,-0.211239)+-(0.0,0.399854)(33,-0.255383)+-(0.0,0.399854)(34,-0.300913)+-(0.0,0.399853)(35,-0.347876)+-(0.0,0.399853)(36,-0.396821)+-(0.0,0.399853)(37,-0.447308)+-(0.0,0.399852)(38,-0.499929)+-(0.0,0.399852)(39,-0.554213)+-(0.0,0.399852)(40,-0.611389)+-(0.0,0.399851)(41,-0.670376)+-(0.0,0.39985)(42,-0.732513)+-(0.0,0.39985)(43,-0.797298)+-(0.0,0.399849)(44,-0.864845)+-(0.0,0.399848)(45,-0.93602)+-(0.0,0.399847)(46,-1.01182)+-(0.0,0.399846)(47,-1.09087)+-(0.0,0.399845)(48,-1.17598)+-(0.0,0.399844)(49,-1.2657)+-(0.0,0.399843)(50,-1.36235)+-(0.0,0.399841)(51,-1.46647)+-(0.0,0.39984)(52,-1.5787)+-(0.0,0.399838)(53,-1.70234)+-(0.0,0.399835)(54,-1.83865)+-(0.0,0.399833)(55,-1.99071)+-(0.0,0.399829)(56,-2.16242)+-(0.0,0.399825)(57,-2.36329)+-(0.0,0.39982)(58,-2.60452)+-(0.0,0.399813)(59,-2.90996)+-(0.0,0.399805)(60,-3.34334)+-(0.0,0.399791)(61,-4.23774)+-(0.0,0.399752)(62,-5.63726)+-(0.0,0.399675)(63,-5.7609)+-(0.0,0.399679)(64,-5.88725)+-(0.0,0.399675)(65,-6.02356)+-(0.0,0.399671)(66,-6.1707)+-(0.0,0.399667)(67,-6.3296)+-(0.0,0.399662)(68,-6.5013)+-(0.0,0.399656)(69,-6.68905)+-(0.0,0.39965)(70,-6.88992)+-(0.0,0.399644)(71,-7.10742)+-(0.0,0.399637)(72,-7.34587)+-(0.0,0.399629)(73,-7.60759)+-(0.0,0.399622)(74,-7.89188)+-(0.0,0.399615)(75,-8.21247)+-(0.0,0.399608)(76,-8.57961)+-(0.0,0.399601)(77,-9.01869)+-(0.0,0.399596)(78,-9.59532)+-(0.0,0.399591)(79,-10.5487)+-(0.0,0.39959)(80,-12.164)+-(0.0,0.399621) 
};

\addplot [
color=black,
dashed
]
coordinates{
 (1,-6.28571)(2,-6.28571)(3,-6.28571)(4,-6.28571)(5,-6.28571)(6,-6.28571)(7,-6.28571)(8,-6.28571)(9,-6.28571)(10,-6.28571)(11,-6.28571)(12,-6.28571)(13,-6.28571)(14,-6.28571)(15,-6.28571)(16,-6.28571)(17,-6.28571)(18,-6.28571)(19,-6.28571)(20,-6.28571)(21,-6.28571)(22,-6.28571)(23,-6.28571)(24,-6.28571)(25,-6.28571)(26,-6.28571)(27,-6.28571)(28,-6.28571)(29,-6.28571)(30,-6.28571)(31,-6.28571)(32,-6.28571)(33,-6.28571)(34,-6.28571)(35,-6.28571)(36,-6.28571)(37,-6.28571)(38,-6.28571)(39,-6.28571)(40,-6.28571)(41,-6.28571)(42,-6.28571)(43,-6.28571)(44,-6.28571)(45,-6.28571)(46,-6.28571)(47,-6.28571)(48,-6.28571)(49,-6.28571)(50,-6.28571)(51,-6.28571)(52,-6.28571)(53,-6.28571)(54,-6.28571)(55,-6.28571)(56,-6.28571)(57,-6.28571)(58,-6.28571)(59,-6.28571)(60,-6.28571)(61,-6.28571)(62,-6.28571)(63,-6.28571)(64,-6.28571)(65,-6.28571)(66,-6.28571)(67,-6.28571)(68,-6.28571)(69,-6.28571)(70,-6.28571)(71,-6.28571)(72,-6.28571)(73,-6.28571)(74,-6.28571)(75,-6.28571)(76,-6.28571)(77,-6.28571)(78,-6.28571)(79,-6.28571)(80,-6.28571) 
};

\end{axis}
\end{tikzpicture}
 \end{subfigure}
\begin{subfigure}
 \centering
\pgfplotsset{width=primalresidualADMMFirstEx1.tikz\columnwidth,height=primalresidualADMMFirstEx1.tikz\columnwidth,compat=newest,plot coordinates/math parser=false}\input{primalresidualADMMFirstEx1.tikz}
 \end{subfigure}
\caption{Norm of primal local residual.  The averaged, over all subproblems, value of the norm of the primal local residual along with its standard deviation \eqref{solid}  and upper bound, $\textrm{log}\left(\epsilon_{\textrm{feas}}^2/N\right)$, \eqref{dashed} are shown. In figures (a), (b) and (c), algorithms \ref{alg:algNPDexact}, \ref{alg:algNPD} and ADMM have been used, respectively.} \label{fig:primalResADMMEx1}\end{center}
\end{figure}
\begin{figure}[h!]
\begin{center}
\begin{subfigure}
 \centering
\pgfplotsset{width=globprimalresidualADMMOrigEx1.tikz\columnwidth,height=globprimalresidualADMMOrigEx1.tikz\columnwidth,compat=newest,plot coordinates/math parser=false}
%
%
\begin{tikzpicture}

\begin{axis}[%
scale only axis,
width=3cm,
height=4cm,
xmin=1, xmax=21,
ymin=-7, ymax=3.5,
title={(a)},
xlabel={iteration in primal-dual method},
ylabel={$\textrm{log}\left(\|(r_{\textrm{primal},1},r_{\textrm{primal},2})\|\right)$},
axis on top]
\addplot [
color=black,
solid
]
coordinates{
 (1,2.84864)(2,2.30132)(3,1.66464)(4,1.30438)(5,1.06545)(6,0.702386)(7,0.528875)(8,0.343447)(9,0.151724)(10,0.0623122)(11,-0.147977)(12,-0.376164)(13,-0.617469)(14,-0.849989)(15,-1.08738)(16,-1.32681)(17,-2.15237)(18,-2.43495)(19,-3.52624)(20,-4.75852)(21,-6.35239) 
};

\addplot [
color=black,
dashed
]
coordinates{
 (1,-2.29337)(2,-2.29337)(3,-2.29337)(4,-2.29337)(5,-2.29337)(6,-2.29337)(7,-2.29337)(8,-2.29337)(9,-2.29337)(10,-2.29337)(11,-2.29337)(12,-2.29337)(13,-2.29337)(14,-2.29337)(15,-2.29337)(16,-2.29337)(17,-2.29337)(18,-2.29337)(19,-2.29337)(20,-2.29337)(21,-2.29337) 
};

\end{axis}
\end{tikzpicture}
 \end{subfigure}
\begin{subfigure}
 \centering
\pgfplotsset{width=globprimalresidualADMMDistEx1.tikz\columnwidth,height=globprimalresidualADMMDistEx1.tikz\columnwidth,compat=newest,plot coordinates/math parser=false}
%
%
\begin{tikzpicture}

\begin{axis}[%
scale only axis,
width=3cm,
height=4cm,
xmin=1, xmax=80,
ymin=-7, ymax=3.5,
title={(b)},
xlabel={iteration in primal-dual method},
ylabel={$\textrm{log}\left(\|(r_{\textrm{primal},1},r_{\textrm{primal},2})\|\right)$},
axis on top]
\addplot [
color=black,
solid
]
coordinates{
 (1,3.07374)(2,2.96864)(3,2.84519)(4,2.69429)(5,2.50605)(6,2.27153)(7,1.9952)(8,1.80457)(9,1.61152)(10,1.44926)(11,1.30016)(12,1.18366)(13,1.15969)(14,1.1462)(15,1.1327)(16,1.11893)(17,1.10473)(18,1.09024)(19,1.07545)(20,1.0602)(21,1.04463)(22,1.02858)(23,1.0122)(24,0.995301)(25,0.977877)(26,0.960092)(27,0.941751)(28,0.922837)(29,0.90333)(30,0.883418)(31,0.862669)(32,0.841489)(33,0.819417)(34,0.796652)(35,0.77317)(36,0.748698)(37,0.723454)(38,0.697143)(39,0.670001)(40,0.641413)(41,0.61192)(42,0.580851)(43,0.548459)(44,0.514685)(45,0.479097)(46,0.441198)(47,0.401671)(48,0.359118)(49,0.314255)(50,0.265933)(51,0.213869)(52,0.157756)(53,0.0959337)(54,0.0277761)(55,-0.0482548)(56,-0.134109)(57,-0.234548)(58,-0.355162)(59,-0.507887)(60,-0.724581)(61,-1.17179)(62,-1.87157)(63,-1.93339)(64,-1.99657)(65,-2.06472)(66,-2.13829)(67,-2.21775)(68,-2.3036)(69,-2.39747)(70,-2.49791)(71,-2.60667)(72,-2.72589)(73,-2.85676)(74,-2.9989)(75,-3.1592)(76,-3.34277)(77,-3.56231)(78,-3.85063)(79,-4.32732)(80,-5.13498) 
};

\addplot [
color=black,
dashed
]
coordinates{
 (1,-2.29337)(2,-2.29337)(3,-2.29337)(4,-2.29337)(5,-2.29337)(6,-2.29337)(7,-2.29337)(8,-2.29337)(9,-2.29337)(10,-2.29337)(11,-2.29337)(12,-2.29337)(13,-2.29337)(14,-2.29337)(15,-2.29337)(16,-2.29337)(17,-2.29337)(18,-2.29337)(19,-2.29337)(20,-2.29337)(21,-2.29337)(22,-2.29337)(23,-2.29337)(24,-2.29337)(25,-2.29337)(26,-2.29337)(27,-2.29337)(28,-2.29337)(29,-2.29337)(30,-2.29337)(31,-2.29337)(32,-2.29337)(33,-2.29337)(34,-2.29337)(35,-2.29337)(36,-2.29337)(37,-2.29337)(38,-2.29337)(39,-2.29337)(40,-2.29337)(41,-2.29337)(42,-2.29337)(43,-2.29337)(44,-2.29337)(45,-2.29337)(46,-2.29337)(47,-2.29337)(48,-2.29337)(49,-2.29337)(50,-2.29337)(51,-2.29337)(52,-2.29337)(53,-2.29337)(54,-2.29337)(55,-2.29337)(56,-2.29337)(57,-2.29337)(58,-2.29337)(59,-2.29337)(60,-2.29337)(61,-2.29337)(62,-2.29337)(63,-2.29337)(64,-2.29337)(65,-2.29337)(66,-2.29337)(67,-2.29337)(68,-2.29337)(69,-2.29337)(70,-2.29337)(71,-2.29337)(72,-2.29337)(73,-2.29337)(74,-2.29337)(75,-2.29337)(76,-2.29337)(77,-2.29337)(78,-2.29337)(79,-2.29337)(80,-2.29337) 
};

\end{axis}
\end{tikzpicture}
 \end{subfigure}
\begin{subfigure}
 \centering
\pgfplotsset{width=globprimalresidualADMMFirstEx1.tikz\columnwidth,height=globprimalresidualADMMFirstEx1.tikz\columnwidth,compat=newest,plot coordinates/math parser=false}
%
%
\begin{tikzpicture}

\begin{axis}[%
scale only axis,
width=3cm,
height=4cm,
xmin=1, xmax=304,
ymin=-7, ymax=3.5,
title={(c)},
xlabel={iteration in primal-dual method},
ylabel={$\textrm{log}\left(\|(r_{\textrm{primal},1},r_{\textrm{primal},2})\|\right)$},
axis on top]
\addplot [
color=black,
solid
]
coordinates{
 (1,2.16618)(2,1.77594)(3,1.67465)(4,1.61589)(5,1.52739)(6,1.42823)(7,1.33769)(8,1.24985)(9,1.16945)(10,1.10038)(11,1.03782)(12,0.970418)(13,0.902175)(14,0.841159)(15,0.783349)(16,0.734042)(17,0.689379)(18,0.643465)(19,0.598233)(20,0.555271)(21,0.515761)(22,0.479144)(23,0.441977)(24,0.406022)(25,0.371448)(26,0.336462)(27,0.298343)(28,0.263889)(29,0.229355)(30,0.197436)(31,0.16843)(32,0.141475)(33,0.115925)(34,0.0913599)(35,0.0675787)(36,0.0445551)(37,0.0213391)(38,-0.00123821)(39,-0.0228058)(40,-0.043552)(41,-0.0635177)(42,-0.0826918)(43,-0.101488)(44,-0.120294)(45,-0.138283)(46,-0.155756)(47,-0.17272)(48,-0.189199)(49,-0.208788)(50,-0.227338)(51,-0.245284)(52,-0.263048)(53,-0.281051)(54,-0.298952)(55,-0.316295)(56,-0.334178)(57,-0.351602)(58,-0.368566)(59,-0.385394)(60,-0.40229)(61,-0.418782)(62,-0.434881)(63,-0.450649)(64,-0.466105)(65,-0.481411)(66,-0.496366)(67,-0.511044)(68,-0.52546)(69,-0.539632)(70,-0.55355)(71,-0.56716)(72,-0.580498)(73,-0.593587)(74,-0.606438)(75,-0.619065)(76,-0.631478)(77,-0.643689)(78,-0.655709)(79,-0.667548)(80,-0.679216)(81,-0.69073)(82,-0.702608)(83,-0.714445)(84,-0.726033)(85,-0.73744)(86,-0.748697)(87,-0.760011)(88,-0.771174)(89,-0.782212)(90,-0.793142)(91,-0.803969)(92,-0.814693)(93,-0.825319)(94,-0.835846)(95,-0.846278)(96,-0.856615)(97,-0.86686)(98,-0.877017)(99,-0.887088)(100,-0.897077)(101,-0.906987)(102,-0.916822)(103,-0.926585)(104,-0.936279)(105,-0.945905)(106,-0.955467)(107,-0.964968)(108,-0.974408)(109,-0.983791)(110,-0.993118)(111,-1.00239)(112,-1.01161)(113,-1.02079)(114,-1.02991)(115,-1.03899)(116,-1.04802)(117,-1.057)(118,-1.06595)(119,-1.07485)(120,-1.08372)(121,-1.09254)(122,-1.10133)(123,-1.11008)(124,-1.1188)(125,-1.12748)(126,-1.13613)(127,-1.14475)(128,-1.15334)(129,-1.16189)(130,-1.17042)(131,-1.17891)(132,-1.18738)(133,-1.19582)(134,-1.20424)(135,-1.21263)(136,-1.22099)(137,-1.22933)(138,-1.23765)(139,-1.24594)(140,-1.25421)(141,-1.26246)(142,-1.27068)(143,-1.27889)(144,-1.28707)(145,-1.29524)(146,-1.30339)(147,-1.31151)(148,-1.31962)(149,-1.32772)(150,-1.33579)(151,-1.34385)(152,-1.35189)(153,-1.35992)(154,-1.36792)(155,-1.37592)(156,-1.3839)(157,-1.39186)(158,-1.39981)(159,-1.40775)(160,-1.41568)(161,-1.42359)(162,-1.43148)(163,-1.43937)(164,-1.44724)(165,-1.4551)(166,-1.46295)(167,-1.47079)(168,-1.47862)(169,-1.48646)(170,-1.49439)(171,-1.50227)(172,-1.51014)(173,-1.51801)(174,-1.52586)(175,-1.53371)(176,-1.54155)(177,-1.54939)(178,-1.55721)(179,-1.56503)(180,-1.57285)(181,-1.58065)(182,-1.58845)(183,-1.59624)(184,-1.60401)(185,-1.61178)(186,-1.61954)(187,-1.62729)(188,-1.63504)(189,-1.64277)(190,-1.65051)(191,-1.65823)(192,-1.66595)(193,-1.67366)(194,-1.68137)(195,-1.68907)(196,-1.69676)(197,-1.70445)(198,-1.71213)(199,-1.71981)(200,-1.72748)(201,-1.73515)(202,-1.74281)(203,-1.75047)(204,-1.75812)(205,-1.76577)(206,-1.77341)(207,-1.78106)(208,-1.78869)(209,-1.79632)(210,-1.80395)(211,-1.81158)(212,-1.8192)(213,-1.82682)(214,-1.83443)(215,-1.84204)(216,-1.84965)(217,-1.85725)(218,-1.86485)(219,-1.87245)(220,-1.88005)(221,-1.88764)(222,-1.89523)(223,-1.90281)(224,-1.9104)(225,-1.91798)(226,-1.92556)(227,-1.93313)(228,-1.9407)(229,-1.94827)(230,-1.95584)(231,-1.96341)(232,-1.97097)(233,-1.97853)(234,-1.98609)(235,-1.99365)(236,-2.0012)(237,-2.00875)(238,-2.0163)(239,-2.02385)(240,-2.03139)(241,-2.03894)(242,-2.04648)(243,-2.05402)(244,-2.06156)(245,-2.06909)(246,-2.07662)(247,-2.08416)(248,-2.09169)(249,-2.09922)(250,-2.10674)(251,-2.11427)(252,-2.12179)(253,-2.12932)(254,-2.13684)(255,-2.14444)(256,-2.15239)(257,-2.16028)(258,-2.16814)(259,-2.17598)(260,-2.18382)(261,-2.19165)(262,-2.19948)(263,-2.20726)(264,-2.21509)(265,-2.22293)(266,-2.23074)(267,-2.23855)(268,-2.24636)(269,-2.25416)(270,-2.26196)(271,-2.26975)(272,-2.27754)(273,-2.28533)(274,-2.29311)(275,-2.30088)(276,-2.30866)(277,-2.31642)(278,-2.32419)(279,-2.33195)(280,-2.33971)(281,-2.34746)(282,-2.35521)(283,-2.36295)(284,-2.37069)(285,-2.37843)(286,-2.38617)(287,-2.3939)(288,-2.40163)(289,-2.40935)(290,-2.41708)(291,-2.42479)(292,-2.43251)(293,-2.44023)(294,-2.44794)(295,-2.45565)(296,-2.46335)(297,-2.47106)(298,-2.47876)(299,-2.48646)(300,-2.49415)(301,-2.50185)(302,-2.50954)(303,-2.51723)(304,-2.52492) 
};

\addplot [
color=black,
dashed
]
coordinates{
 (1,-2.29337)(2,-2.29337)(3,-2.29337)(4,-2.29337)(5,-2.29337)(6,-2.29337)(7,-2.29337)(8,-2.29337)(9,-2.29337)(10,-2.29337)(11,-2.29337)(12,-2.29337)(13,-2.29337)(14,-2.29337)(15,-2.29337)(16,-2.29337)(17,-2.29337)(18,-2.29337)(19,-2.29337)(20,-2.29337)(21,-2.29337)(22,-2.29337)(23,-2.29337)(24,-2.29337)(25,-2.29337)(26,-2.29337)(27,-2.29337)(28,-2.29337)(29,-2.29337)(30,-2.29337)(31,-2.29337)(32,-2.29337)(33,-2.29337)(34,-2.29337)(35,-2.29337)(36,-2.29337)(37,-2.29337)(38,-2.29337)(39,-2.29337)(40,-2.29337)(41,-2.29337)(42,-2.29337)(43,-2.29337)(44,-2.29337)(45,-2.29337)(46,-2.29337)(47,-2.29337)(48,-2.29337)(49,-2.29337)(50,-2.29337)(51,-2.29337)(52,-2.29337)(53,-2.29337)(54,-2.29337)(55,-2.29337)(56,-2.29337)(57,-2.29337)(58,-2.29337)(59,-2.29337)(60,-2.29337)(61,-2.29337)(62,-2.29337)(63,-2.29337)(64,-2.29337)(65,-2.29337)(66,-2.29337)(67,-2.29337)(68,-2.29337)(69,-2.29337)(70,-2.29337)(71,-2.29337)(72,-2.29337)(73,-2.29337)(74,-2.29337)(75,-2.29337)(76,-2.29337)(77,-2.29337)(78,-2.29337)(79,-2.29337)(80,-2.29337)(81,-2.29337)(82,-2.29337)(83,-2.29337)(84,-2.29337)(85,-2.29337)(86,-2.29337)(87,-2.29337)(88,-2.29337)(89,-2.29337)(90,-2.29337)(91,-2.29337)(92,-2.29337)(93,-2.29337)(94,-2.29337)(95,-2.29337)(96,-2.29337)(97,-2.29337)(98,-2.29337)(99,-2.29337)(100,-2.29337)(101,-2.29337)(102,-2.29337)(103,-2.29337)(104,-2.29337)(105,-2.29337)(106,-2.29337)(107,-2.29337)(108,-2.29337)(109,-2.29337)(110,-2.29337)(111,-2.29337)(112,-2.29337)(113,-2.29337)(114,-2.29337)(115,-2.29337)(116,-2.29337)(117,-2.29337)(118,-2.29337)(119,-2.29337)(120,-2.29337)(121,-2.29337)(122,-2.29337)(123,-2.29337)(124,-2.29337)(125,-2.29337)(126,-2.29337)(127,-2.29337)(128,-2.29337)(129,-2.29337)(130,-2.29337)(131,-2.29337)(132,-2.29337)(133,-2.29337)(134,-2.29337)(135,-2.29337)(136,-2.29337)(137,-2.29337)(138,-2.29337)(139,-2.29337)(140,-2.29337)(141,-2.29337)(142,-2.29337)(143,-2.29337)(144,-2.29337)(145,-2.29337)(146,-2.29337)(147,-2.29337)(148,-2.29337)(149,-2.29337)(150,-2.29337)(151,-2.29337)(152,-2.29337)(153,-2.29337)(154,-2.29337)(155,-2.29337)(156,-2.29337)(157,-2.29337)(158,-2.29337)(159,-2.29337)(160,-2.29337)(161,-2.29337)(162,-2.29337)(163,-2.29337)(164,-2.29337)(165,-2.29337)(166,-2.29337)(167,-2.29337)(168,-2.29337)(169,-2.29337)(170,-2.29337)(171,-2.29337)(172,-2.29337)(173,-2.29337)(174,-2.29337)(175,-2.29337)(176,-2.29337)(177,-2.29337)(178,-2.29337)(179,-2.29337)(180,-2.29337)(181,-2.29337)(182,-2.29337)(183,-2.29337)(184,-2.29337)(185,-2.29337)(186,-2.29337)(187,-2.29337)(188,-2.29337)(189,-2.29337)(190,-2.29337)(191,-2.29337)(192,-2.29337)(193,-2.29337)(194,-2.29337)(195,-2.29337)(196,-2.29337)(197,-2.29337)(198,-2.29337)(199,-2.29337)(200,-2.29337)(201,-2.29337)(202,-2.29337)(203,-2.29337)(204,-2.29337)(205,-2.29337)(206,-2.29337)(207,-2.29337)(208,-2.29337)(209,-2.29337)(210,-2.29337)(211,-2.29337)(212,-2.29337)(213,-2.29337)(214,-2.29337)(215,-2.29337)(216,-2.29337)(217,-2.29337)(218,-2.29337)(219,-2.29337)(220,-2.29337)(221,-2.29337)(222,-2.29337)(223,-2.29337)(224,-2.29337)(225,-2.29337)(226,-2.29337)(227,-2.29337)(228,-2.29337)(229,-2.29337)(230,-2.29337)(231,-2.29337)(232,-2.29337)(233,-2.29337)(234,-2.29337)(235,-2.29337)(236,-2.29337)(237,-2.29337)(238,-2.29337)(239,-2.29337)(240,-2.29337)(241,-2.29337)(242,-2.29337)(243,-2.29337)(244,-2.29337)(245,-2.29337)(246,-2.29337)(247,-2.29337)(248,-2.29337)(249,-2.29337)(250,-2.29337)(251,-2.29337)(252,-2.29337)(253,-2.29337)(254,-2.29337)(255,-2.29337)(256,-2.29337)(257,-2.29337)(258,-2.29337)(259,-2.29337)(260,-2.29337)(261,-2.29337)(262,-2.29337)(263,-2.29337)(264,-2.29337)(265,-2.29337)(266,-2.29337)(267,-2.29337)(268,-2.29337)(269,-2.29337)(270,-2.29337)(271,-2.29337)(272,-2.29337)(273,-2.29337)(274,-2.29337)(275,-2.29337)(276,-2.29337)(277,-2.29337)(278,-2.29337)(279,-2.29337)(280,-2.29337)(281,-2.29337)(282,-2.29337)(283,-2.29337)(284,-2.29337)(285,-2.29337)(286,-2.29337)(287,-2.29337)(288,-2.29337)(289,-2.29337)(290,-2.29337)(291,-2.29337)(292,-2.29337)(293,-2.29337)(294,-2.29337)(295,-2.29337)(296,-2.29337)(297,-2.29337)(298,-2.29337)(299,-2.29337)(300,-2.29337)(301,-2.29337)(302,-2.29337)(303,-2.29337)(304,-2.29337) 
};

\end{axis}
\end{tikzpicture}
 \end{subfigure}
\caption{Norm of primal global residual.  The value of the norm of the primal global residual \eqref{solid} and its upper bound $\textrm{log}\left(\epsilon_{\textrm{feas}}\right)$, \eqref{dashed} are shown. In figures (a), (b) and (c), algorithms \ref{alg:algNPDexact}, \ref{alg:algNPD} and ADMM have been used, respectively.} \label{fig:globprimalResADMMEx1}\end{center}
\end{figure}

\begin{figure}[h!]
\begin{center}
\begin{subfigure}
 \centering
\pgfplotsset{width=dualresidualADMMOrigEx1.tikz\columnwidth,height=dualresidualADMMOrigEx1.tikz\columnwidth,compat=newest,plot coordinates/math parser=false}
%
%
\begin{tikzpicture}

\begin{axis}[%
scale only axis,
width=3cm,
height=4cm,
xmin=1, xmax=21,
ymin=-15, ymax=5,
title={(a)},
xlabel={iteration in primal-dual method},
ylabel={$\frac{1}{N}\sum_{i=1}^N\textrm{log}\left(\|r_{\textrm{dual}}^i\|\right)$},
axis on top]
\addplot [
color=black,
solid
]
plot [error bars/.cd, y dir = both, y explicit]
coordinates{
 (1,5.45125)+-(0.0,0.0522606)(2,4.35661)+-(0.0,0.0522606)(3,3.08325)+-(0.0,0.0522606)(4,2.36274)+-(0.0,0.0522606)(5,1.88488)+-(0.0,0.0522606)(6,1.15874)+-(0.0,0.0522606)(7,0.81172)+-(0.0,0.0522606)(8,0.440864)+-(0.0,0.0522606)(9,0.0574184)+-(0.0,0.0522606)(10,-0.121405)+-(0.0,0.0522606)(11,-0.541983)+-(0.0,0.0522606)(12,-0.998357)+-(0.0,0.0522606)(13,-1.48097)+-(0.0,0.0522606)(14,-1.94601)+-(0.0,0.0522606)(15,-2.42079)+-(0.0,0.0522606)(16,-2.89966)+-(0.0,0.0522606)(17,-4.55077)+-(0.0,0.0522606)(18,-5.11594)+-(0.0,0.0522606)(19,-7.29851)+-(0.0,0.0522606)(20,-9.76307)+-(0.0,0.0522608)(21,-12.9508)+-(0.0,0.0523146) 
};

\addplot [
color=black,
dashed
]
coordinates{
 (1,-6.28571)(2,-6.28571)(3,-6.28571)(4,-6.28571)(5,-6.28571)(6,-6.28571)(7,-6.28571)(8,-6.28571)(9,-6.28571)(10,-6.28571)(11,-6.28571)(12,-6.28571)(13,-6.28571)(14,-6.28571)(15,-6.28571)(16,-6.28571)(17,-6.28571)(18,-6.28571)(19,-6.28571)(20,-6.28571)(21,-6.28571) 
};

\end{axis}
\end{tikzpicture}
 \end{subfigure}
\begin{subfigure}
 \centering
\pgfplotsset{width=dualresidualADMMDistEx1.tikz\columnwidth,height=dualresidualADMMDistEx1.tikz\columnwidth,compat=newest,plot coordinates/math parser=false}
%
%
\begin{tikzpicture}

\begin{axis}[%
scale only axis,
width=3cm,
height=4cm,
xmin=1, xmax=80,
ymin=-15, ymax=5,
title={(b)},
xlabel={iteration in primal-dual method},
ylabel={$\frac{1}{N}\sum_{i=1}^N\textrm{log}\left(\|r_{\textrm{dual}}^i\|\right)$},
axis on top]
\addplot [
color=black,
solid
]
plot [error bars/.cd, y dir = both, y explicit]
coordinates{
 (4,5.14256)+-(0.0,0.0522606)(5,4.76608)+-(0.0,0.0522606)(6,4.29703)+-(0.0,0.0522606)(7,3.74437)+-(0.0,0.0522606)(8,3.36312)+-(0.0,0.0522606)(9,2.97702)+-(0.0,0.0522606)(10,2.65249)+-(0.0,0.0522606)(11,2.3543)+-(0.0,0.0522606)(12,2.12129)+-(0.0,0.0522606)(13,2.07334)+-(0.0,0.0522606)(14,2.04636)+-(0.0,0.0522606)(15,2.01938)+-(0.0,0.0522606)(16,1.99184)+-(0.0,0.0522606)(17,1.96344)+-(0.0,0.0522606)(18,1.93445)+-(0.0,0.0522606)(19,1.90487)+-(0.0,0.0522606)(20,1.87437)+-(0.0,0.0522606)(21,1.84324)+-(0.0,0.0522606)(22,1.81113)+-(0.0,0.0522606)(23,1.77836)+-(0.0,0.0522606)(24,1.74457)+-(0.0,0.0522606)(25,1.70972)+-(0.0,0.0522606)(26,1.67415)+-(0.0,0.0522606)(27,1.63747)+-(0.0,0.0522606)(28,1.59964)+-(0.0,0.0522606)(29,1.56063)+-(0.0,0.0522606)(30,1.5208)+-(0.0,0.0522606)(31,1.4793)+-(0.0,0.0522606)(32,1.43694)+-(0.0,0.0522606)(33,1.3928)+-(0.0,0.0522606)(34,1.34727)+-(0.0,0.0522606)(35,1.30031)+-(0.0,0.0522606)(36,1.25136)+-(0.0,0.0522606)(37,1.20087)+-(0.0,0.0522606)(38,1.14825)+-(0.0,0.0522606)(39,1.09397)+-(0.0,0.0522606)(40,1.03679)+-(0.0,0.0522606)(41,0.977804)+-(0.0,0.0522606)(42,0.915666)+-(0.0,0.0522606)(43,0.850881)+-(0.0,0.0522606)(44,0.783333)+-(0.0,0.0522606)(45,0.712157)+-(0.0,0.0522606)(46,0.636359)+-(0.0,0.0522606)(47,0.557305)+-(0.0,0.0522606)(48,0.472198)+-(0.0,0.0522606)(49,0.382471)+-(0.0,0.0522606)(50,0.285827)+-(0.0,0.0522606)(51,0.181699)+-(0.0,0.0522606)(52,0.0694713)+-(0.0,0.0522606)(53,-0.0541731)+-(0.0,0.0522606)(54,-0.190489)+-(0.0,0.0522606)(55,-0.342552)+-(0.0,0.0522606)(56,-0.514261)+-(0.0,0.0522606)(57,-0.71514)+-(0.0,0.0522606)(58,-0.95637)+-(0.0,0.0522607)(59,-1.26182)+-(0.0,0.0522607)(60,-1.69521)+-(0.0,0.0522607)(61,-2.58964)+-(0.0,0.0522609)(62,-3.98923)+-(0.0,0.0522611)(63,-4.11287)+-(0.0,0.0522611)(64,-4.23922)+-(0.0,0.0522611)(65,-4.37553)+-(0.0,0.0522611)(66,-4.52268)+-(0.0,0.0522611)(67,-4.68158)+-(0.0,0.0522611)(68,-4.85329)+-(0.0,0.0522612)(69,-5.04104)+-(0.0,0.0522612)(70,-5.24192)+-(0.0,0.0522612)(71,-5.45943)+-(0.0,0.0522612)(72,-5.69789)+-(0.0,0.0522613)(73,-5.95962)+-(0.0,0.0522613)(74,-6.24391)+-(0.0,0.0522613)(75,-6.56451)+-(0.0,0.0522614)(76,-6.93166)+-(0.0,0.0522614)(77,-7.37075)+-(0.0,0.0522615)(78,-7.94739)+-(0.0,0.0522615)(79,-8.90077)+-(0.0,0.0522615)(80,-10.5161)+-(0.0,0.0522615) 
};

\addplot [
color=black,
dashed
]
coordinates{
 (1,-6.28571)(2,-6.28571)(3,-6.28571)(4,-6.28571)(5,-6.28571)(6,-6.28571)(7,-6.28571)(8,-6.28571)(9,-6.28571)(10,-6.28571)(11,-6.28571)(12,-6.28571)(13,-6.28571)(14,-6.28571)(15,-6.28571)(16,-6.28571)(17,-6.28571)(18,-6.28571)(19,-6.28571)(20,-6.28571)(21,-6.28571)(22,-6.28571)(23,-6.28571)(24,-6.28571)(25,-6.28571)(26,-6.28571)(27,-6.28571)(28,-6.28571)(29,-6.28571)(30,-6.28571)(31,-6.28571)(32,-6.28571)(33,-6.28571)(34,-6.28571)(35,-6.28571)(36,-6.28571)(37,-6.28571)(38,-6.28571)(39,-6.28571)(40,-6.28571)(41,-6.28571)(42,-6.28571)(43,-6.28571)(44,-6.28571)(45,-6.28571)(46,-6.28571)(47,-6.28571)(48,-6.28571)(49,-6.28571)(50,-6.28571)(51,-6.28571)(52,-6.28571)(53,-6.28571)(54,-6.28571)(55,-6.28571)(56,-6.28571)(57,-6.28571)(58,-6.28571)(59,-6.28571)(60,-6.28571)(61,-6.28571)(62,-6.28571)(63,-6.28571)(64,-6.28571)(65,-6.28571)(66,-6.28571)(67,-6.28571)(68,-6.28571)(69,-6.28571)(70,-6.28571)(71,-6.28571)(72,-6.28571)(73,-6.28571)(74,-6.28571)(75,-6.28571)(76,-6.28571)(77,-6.28571)(78,-6.28571)(79,-6.28571)(80,-6.28571) 
};

\end{axis}
\end{tikzpicture}
 \end{subfigure}
\begin{subfigure}
 \centering
\pgfplotsset{width=dualresidualADMMFirstEx1.tikz\columnwidth,height=dualresidualADMMFirstEx1.tikz\columnwidth,compat=newest,plot coordinates/math parser=false}\input{dualresidualADMMFirstEx1.tikz}
 \end{subfigure}
\caption{Norm of dual local residual.  The averaged, over all subproblems, value of the norm of the dual local residual along with its standard deviation \eqref{solid}  and upper bound, $\textrm{log}\left(\epsilon_{\textrm{feas}}^2/N\right)$, \eqref{dashed} are shown. In figures (a), (b) and (c), algorithms \ref{alg:algNPDexact}, \ref{alg:algNPD} and ADMM have been used, respectively.} \label{fig:dualResADMMEx1}\end{center}
\end{figure}
\begin{figure}[h!]
\begin{center}
\begin{subfigure}
 \centering
\pgfplotsset{width=globdualresidualADMMOrigEx1.tikz\columnwidth,height=globdualresidualADMMOrigEx1.tikz\columnwidth,compat=newest,plot coordinates/math parser=false}
%
%
\begin{tikzpicture}

\begin{axis}[%
scale only axis,
width=3cm,
height=4cm,
xmin=1, xmax=21,
ymin=-6, ymax=4,
title={(a)},
xlabel={iteration in primal-dual method},
ylabel={$\textrm{log}\left(\|r_{\textrm{dual}}\|\right)$},
axis on top]
\addplot [
color=black,
solid
]
coordinates{
 (1,3.57665)(2,3.02933)(3,2.39265)(4,2.03239)(5,1.79346)(6,1.43039)(7,1.25688)(8,1.07145)(9,0.879731)(10,0.790319)(11,0.58003)(12,0.351843)(13,0.110538)(14,-0.121982)(15,-0.359371)(16,-0.598806)(17,-1.42437)(18,-1.70695)(19,-2.79823)(20,-4.03052)(21,-5.62439) 
};

\addplot [
color=black,
dashed
]
coordinates{
 (1,-2.29337)(2,-2.29337)(3,-2.29337)(4,-2.29337)(5,-2.29337)(6,-2.29337)(7,-2.29337)(8,-2.29337)(9,-2.29337)(10,-2.29337)(11,-2.29337)(12,-2.29337)(13,-2.29337)(14,-2.29337)(15,-2.29337)(16,-2.29337)(17,-2.29337)(18,-2.29337)(19,-2.29337)(20,-2.29337)(21,-2.29337) 
};

\end{axis}
\end{tikzpicture}
 \end{subfigure}
\begin{subfigure}
 \centering
\pgfplotsset{width=globdualresidualADMMDistEx1.tikz\columnwidth,height=globdualresidualADMMDistEx1.tikz\columnwidth,compat=newest,plot coordinates/math parser=false}
%
%
\begin{tikzpicture}

\begin{axis}[%
scale only axis,
width=3cm,
height=4cm,
xmin=0, xmax=80,
ymin=-6, ymax=4,
title={(b)},
xlabel={iteration in primal-dual method},
ylabel={$\textrm{log}\left(\|r_{\textrm{dual}}\|\right)$},
axis on top]
\addplot [
color=black,
solid
]
coordinates{
 (1,3.80175)(2,3.69665)(3,3.5732)(4,3.4223)(5,3.23406)(6,2.99954)(7,2.72321)(8,2.53258)(9,2.33953)(10,2.17727)(11,2.02817)(12,1.91166)(13,1.88769)(14,1.8742)(15,1.86071)(16,1.84694)(17,1.83274)(18,1.81825)(19,1.80346)(20,1.78821)(21,1.77264)(22,1.75659)(23,1.7402)(24,1.72331)(25,1.70588)(26,1.6881)(27,1.66976)(28,1.65084)(29,1.63134)(30,1.61142)(31,1.59067)(32,1.56949)(33,1.54742)(34,1.52466)(35,1.50118)(36,1.4767)(37,1.45146)(38,1.42515)(39,1.39801)(40,1.36942)(41,1.33992)(42,1.30886)(43,1.27646)(44,1.24269)(45,1.2071)(46,1.1692)(47,1.12967)(48,1.08712)(49,1.04226)(50,0.993935)(51,0.941872)(52,0.885758)(53,0.823935)(54,0.755778)(55,0.679746)(56,0.593891)(57,0.493452)(58,0.372837)(59,0.220112)(60,0.00341552)(61,-0.4438)(62,-1.14359)(63,-1.20541)(64,-1.26859)(65,-1.33675)(66,-1.41032)(67,-1.48977)(68,-1.57562)(69,-1.6695)(70,-1.76994)(71,-1.87869)(72,-1.99792)(73,-2.12879)(74,-2.27093)(75,-2.43123)(76,-2.61481)(77,-2.83435)(78,-3.12267)(79,-3.59936)(80,-4.40701) 
};

\addplot [
color=black,
dashed
]
coordinates{
 (1,-2.29337)(2,-2.29337)(3,-2.29337)(4,-2.29337)(5,-2.29337)(6,-2.29337)(7,-2.29337)(8,-2.29337)(9,-2.29337)(10,-2.29337)(11,-2.29337)(12,-2.29337)(13,-2.29337)(14,-2.29337)(15,-2.29337)(16,-2.29337)(17,-2.29337)(18,-2.29337)(19,-2.29337)(20,-2.29337)(21,-2.29337)(22,-2.29337)(23,-2.29337)(24,-2.29337)(25,-2.29337)(26,-2.29337)(27,-2.29337)(28,-2.29337)(29,-2.29337)(30,-2.29337)(31,-2.29337)(32,-2.29337)(33,-2.29337)(34,-2.29337)(35,-2.29337)(36,-2.29337)(37,-2.29337)(38,-2.29337)(39,-2.29337)(40,-2.29337)(41,-2.29337)(42,-2.29337)(43,-2.29337)(44,-2.29337)(45,-2.29337)(46,-2.29337)(47,-2.29337)(48,-2.29337)(49,-2.29337)(50,-2.29337)(51,-2.29337)(52,-2.29337)(53,-2.29337)(54,-2.29337)(55,-2.29337)(56,-2.29337)(57,-2.29337)(58,-2.29337)(59,-2.29337)(60,-2.29337)(61,-2.29337)(62,-2.29337)(63,-2.29337)(64,-2.29337)(65,-2.29337)(66,-2.29337)(67,-2.29337)(68,-2.29337)(69,-2.29337)(70,-2.29337)(71,-2.29337)(72,-2.29337)(73,-2.29337)(74,-2.29337)(75,-2.29337)(76,-2.29337)(77,-2.29337)(78,-2.29337)(79,-2.29337)(80,-2.29337) 
};

\end{axis}
\end{tikzpicture}
 \end{subfigure}
\begin{subfigure}
 \centering
\pgfplotsset{width=globdualresidualADMMFirstEx1.tikz\columnwidth,height=globdualresidualADMMFirstEx1.tikz\columnwidth,compat=newest,plot coordinates/math parser=false}
%
%
\begin{tikzpicture}

\begin{axis}[%
scale only axis,
width=3cm,
height=4cm,
xmin=1, xmax=304,
ymin=-6, ymax=4,
title={(c)},
xlabel={iteration in primal-dual method},
ylabel={$\textrm{log}\left(\|r_{\textrm{dual}}\|\right)$},
axis on top]
\addplot [
color=black,
solid
]
coordinates{
 (1,1.96484)(2,1.79568)(3,1.58297)(4,1.38293)(5,1.22587)(6,1.10196)(7,0.997016)(8,0.904596)(9,0.820812)(10,0.736726)(11,0.660099)(12,0.596406)(13,0.538212)(14,0.478551)(15,0.416165)(16,0.352922)(17,0.293237)(18,0.239582)(19,0.189448)(20,0.141228)(21,0.0940209)(22,0.0479639)(23,0.00270974)(24,-0.0402089)(25,-0.080975)(26,-0.120057)(27,-0.15769)(28,-0.194187)(29,-0.231803)(30,-0.270328)(31,-0.309191)(32,-0.347607)(33,-0.384708)(34,-0.419996)(35,-0.453477)(36,-0.48552)(37,-0.516466)(38,-0.545817)(39,-0.574446)(40,-0.602661)(41,-0.630515)(42,-0.658189)(43,-0.685241)(44,-0.711247)(45,-0.73595)(46,-0.759972)(47,-0.783535)(48,-0.806659)(49,-0.828209)(50,-0.846632)(51,-0.864733)(52,-0.883028)(53,-0.90151)(54,-0.91979)(55,-0.938348)(56,-0.957097)(57,-0.975442)(58,-0.993925)(59,-1.01261)(60,-1.03135)(61,-1.04991)(62,-1.06863)(63,-1.0875)(64,-1.10643)(65,-1.12526)(66,-1.14395)(67,-1.16253)(68,-1.18094)(69,-1.19914)(70,-1.21738)(71,-1.23567)(72,-1.2537)(73,-1.27145)(74,-1.28892)(75,-1.30608)(76,-1.32295)(77,-1.3395)(78,-1.35574)(79,-1.37167)(80,-1.3873)(81,-1.40263)(82,-1.4174)(83,-1.43147)(84,-1.44497)(85,-1.45845)(86,-1.47196)(87,-1.48535)(88,-1.4984)(89,-1.51132)(90,-1.52417)(91,-1.53695)(92,-1.54966)(93,-1.56231)(94,-1.57489)(95,-1.58739)(96,-1.59981)(97,-1.61215)(98,-1.62439)(99,-1.63653)(100,-1.64855)(101,-1.66045)(102,-1.67224)(103,-1.6839)(104,-1.69544)(105,-1.70687)(106,-1.71818)(107,-1.72938)(108,-1.74047)(109,-1.75145)(110,-1.76234)(111,-1.77312)(112,-1.78381)(113,-1.7944)(114,-1.8049)(115,-1.81532)(116,-1.82566)(117,-1.83591)(118,-1.84609)(119,-1.85619)(120,-1.86622)(121,-1.87618)(122,-1.88608)(123,-1.8959)(124,-1.90567)(125,-1.91537)(126,-1.92501)(127,-1.9346)(128,-1.94412)(129,-1.9536)(130,-1.96302)(131,-1.97239)(132,-1.98171)(133,-1.99098)(134,-2.0002)(135,-2.00938)(136,-2.01851)(137,-2.0276)(138,-2.03665)(139,-2.04566)(140,-2.05462)(141,-2.06355)(142,-2.07244)(143,-2.08129)(144,-2.09011)(145,-2.09889)(146,-2.10764)(147,-2.11635)(148,-2.12503)(149,-2.13368)(150,-2.1423)(151,-2.15089)(152,-2.15945)(153,-2.16798)(154,-2.17648)(155,-2.18496)(156,-2.19341)(157,-2.20183)(158,-2.21024)(159,-2.21861)(160,-2.22697)(161,-2.2353)(162,-2.24361)(163,-2.2519)(164,-2.26017)(165,-2.26841)(166,-2.27664)(167,-2.28484)(168,-2.29302)(169,-2.30117)(170,-2.30914)(171,-2.31689)(172,-2.32467)(173,-2.33251)(174,-2.34038)(175,-2.34829)(176,-2.35621)(177,-2.36415)(178,-2.3721)(179,-2.38006)(180,-2.38802)(181,-2.39599)(182,-2.40399)(183,-2.41202)(184,-2.42005)(185,-2.42806)(186,-2.43606)(187,-2.44404)(188,-2.45202)(189,-2.45997)(190,-2.46792)(191,-2.47585)(192,-2.48377)(193,-2.49168)(194,-2.49957)(195,-2.50745)(196,-2.51532)(197,-2.52318)(198,-2.53103)(199,-2.53886)(200,-2.54669)(201,-2.55451)(202,-2.56232)(203,-2.57012)(204,-2.57791)(205,-2.58569)(206,-2.59347)(207,-2.60123)(208,-2.60899)(209,-2.61674)(210,-2.62449)(211,-2.63222)(212,-2.63995)(213,-2.64767)(214,-2.65538)(215,-2.66309)(216,-2.67079)(217,-2.67848)(218,-2.68617)(219,-2.69385)(220,-2.70153)(221,-2.7092)(222,-2.71686)(223,-2.72452)(224,-2.73217)(225,-2.73981)(226,-2.74745)(227,-2.75509)(228,-2.76272)(229,-2.77034)(230,-2.77796)(231,-2.78558)(232,-2.79318)(233,-2.80079)(234,-2.80839)(235,-2.81598)(236,-2.82357)(237,-2.83115)(238,-2.83873)(239,-2.8463)(240,-2.85386)(241,-2.86142)(242,-2.86898)(243,-2.87653)(244,-2.88407)(245,-2.89161)(246,-2.89914)(247,-2.90667)(248,-2.91418)(249,-2.92169)(250,-2.9292)(251,-2.9367)(252,-2.94422)(253,-2.95175)(254,-2.95921)(255,-2.96666)(256,-2.97399)(257,-2.98022)(258,-2.98629)(259,-2.99268)(260,-2.99938)(261,-3.00634)(262,-3.0135)(263,-3.01871)(264,-3.02784)(265,-3.03546)(266,-3.04312)(267,-3.0508)(268,-3.0585)(269,-3.06622)(270,-3.07396)(271,-3.08171)(272,-3.08947)(273,-3.09724)(274,-3.10502)(275,-3.1128)(276,-3.12057)(277,-3.12834)(278,-3.13611)(279,-3.14387)(280,-3.15162)(281,-3.15936)(282,-3.1671)(283,-3.17482)(284,-3.18253)(285,-3.19023)(286,-3.19791)(287,-3.20559)(288,-3.21326)(289,-3.22091)(290,-3.22855)(291,-3.23618)(292,-3.2438)(293,-3.25141)(294,-3.25901)(295,-3.26659)(296,-3.27416)(297,-3.28171)(298,-3.28926)(299,-3.29679)(300,-3.30431)(301,-3.31182)(302,-3.31931)(303,-3.32679)(304,-3.33426) 
};

\addplot [
color=black,
dashed
]
coordinates{
 (1,-2.29337)(2,-2.29337)(3,-2.29337)(4,-2.29337)(5,-2.29337)(6,-2.29337)(7,-2.29337)(8,-2.29337)(9,-2.29337)(10,-2.29337)(11,-2.29337)(12,-2.29337)(13,-2.29337)(14,-2.29337)(15,-2.29337)(16,-2.29337)(17,-2.29337)(18,-2.29337)(19,-2.29337)(20,-2.29337)(21,-2.29337)(22,-2.29337)(23,-2.29337)(24,-2.29337)(25,-2.29337)(26,-2.29337)(27,-2.29337)(28,-2.29337)(29,-2.29337)(30,-2.29337)(31,-2.29337)(32,-2.29337)(33,-2.29337)(34,-2.29337)(35,-2.29337)(36,-2.29337)(37,-2.29337)(38,-2.29337)(39,-2.29337)(40,-2.29337)(41,-2.29337)(42,-2.29337)(43,-2.29337)(44,-2.29337)(45,-2.29337)(46,-2.29337)(47,-2.29337)(48,-2.29337)(49,-2.29337)(50,-2.29337)(51,-2.29337)(52,-2.29337)(53,-2.29337)(54,-2.29337)(55,-2.29337)(56,-2.29337)(57,-2.29337)(58,-2.29337)(59,-2.29337)(60,-2.29337)(61,-2.29337)(62,-2.29337)(63,-2.29337)(64,-2.29337)(65,-2.29337)(66,-2.29337)(67,-2.29337)(68,-2.29337)(69,-2.29337)(70,-2.29337)(71,-2.29337)(72,-2.29337)(73,-2.29337)(74,-2.29337)(75,-2.29337)(76,-2.29337)(77,-2.29337)(78,-2.29337)(79,-2.29337)(80,-2.29337)(81,-2.29337)(82,-2.29337)(83,-2.29337)(84,-2.29337)(85,-2.29337)(86,-2.29337)(87,-2.29337)(88,-2.29337)(89,-2.29337)(90,-2.29337)(91,-2.29337)(92,-2.29337)(93,-2.29337)(94,-2.29337)(95,-2.29337)(96,-2.29337)(97,-2.29337)(98,-2.29337)(99,-2.29337)(100,-2.29337)(101,-2.29337)(102,-2.29337)(103,-2.29337)(104,-2.29337)(105,-2.29337)(106,-2.29337)(107,-2.29337)(108,-2.29337)(109,-2.29337)(110,-2.29337)(111,-2.29337)(112,-2.29337)(113,-2.29337)(114,-2.29337)(115,-2.29337)(116,-2.29337)(117,-2.29337)(118,-2.29337)(119,-2.29337)(120,-2.29337)(121,-2.29337)(122,-2.29337)(123,-2.29337)(124,-2.29337)(125,-2.29337)(126,-2.29337)(127,-2.29337)(128,-2.29337)(129,-2.29337)(130,-2.29337)(131,-2.29337)(132,-2.29337)(133,-2.29337)(134,-2.29337)(135,-2.29337)(136,-2.29337)(137,-2.29337)(138,-2.29337)(139,-2.29337)(140,-2.29337)(141,-2.29337)(142,-2.29337)(143,-2.29337)(144,-2.29337)(145,-2.29337)(146,-2.29337)(147,-2.29337)(148,-2.29337)(149,-2.29337)(150,-2.29337)(151,-2.29337)(152,-2.29337)(153,-2.29337)(154,-2.29337)(155,-2.29337)(156,-2.29337)(157,-2.29337)(158,-2.29337)(159,-2.29337)(160,-2.29337)(161,-2.29337)(162,-2.29337)(163,-2.29337)(164,-2.29337)(165,-2.29337)(166,-2.29337)(167,-2.29337)(168,-2.29337)(169,-2.29337)(170,-2.29337)(171,-2.29337)(172,-2.29337)(173,-2.29337)(174,-2.29337)(175,-2.29337)(176,-2.29337)(177,-2.29337)(178,-2.29337)(179,-2.29337)(180,-2.29337)(181,-2.29337)(182,-2.29337)(183,-2.29337)(184,-2.29337)(185,-2.29337)(186,-2.29337)(187,-2.29337)(188,-2.29337)(189,-2.29337)(190,-2.29337)(191,-2.29337)(192,-2.29337)(193,-2.29337)(194,-2.29337)(195,-2.29337)(196,-2.29337)(197,-2.29337)(198,-2.29337)(199,-2.29337)(200,-2.29337)(201,-2.29337)(202,-2.29337)(203,-2.29337)(204,-2.29337)(205,-2.29337)(206,-2.29337)(207,-2.29337)(208,-2.29337)(209,-2.29337)(210,-2.29337)(211,-2.29337)(212,-2.29337)(213,-2.29337)(214,-2.29337)(215,-2.29337)(216,-2.29337)(217,-2.29337)(218,-2.29337)(219,-2.29337)(220,-2.29337)(221,-2.29337)(222,-2.29337)(223,-2.29337)(224,-2.29337)(225,-2.29337)(226,-2.29337)(227,-2.29337)(228,-2.29337)(229,-2.29337)(230,-2.29337)(231,-2.29337)(232,-2.29337)(233,-2.29337)(234,-2.29337)(235,-2.29337)(236,-2.29337)(237,-2.29337)(238,-2.29337)(239,-2.29337)(240,-2.29337)(241,-2.29337)(242,-2.29337)(243,-2.29337)(244,-2.29337)(245,-2.29337)(246,-2.29337)(247,-2.29337)(248,-2.29337)(249,-2.29337)(250,-2.29337)(251,-2.29337)(252,-2.29337)(253,-2.29337)(254,-2.29337)(255,-2.29337)(256,-2.29337)(257,-2.29337)(258,-2.29337)(259,-2.29337)(260,-2.29337)(261,-2.29337)(262,-2.29337)(263,-2.29337)(264,-2.29337)(265,-2.29337)(266,-2.29337)(267,-2.29337)(268,-2.29337)(269,-2.29337)(270,-2.29337)(271,-2.29337)(272,-2.29337)(273,-2.29337)(274,-2.29337)(275,-2.29337)(276,-2.29337)(277,-2.29337)(278,-2.29337)(279,-2.29337)(280,-2.29337)(281,-2.29337)(282,-2.29337)(283,-2.29337)(284,-2.29337)(285,-2.29337)(286,-2.29337)(287,-2.29337)(288,-2.29337)(289,-2.29337)(290,-2.29337)(291,-2.29337)(292,-2.29337)(293,-2.29337)(294,-2.29337)(295,-2.29337)(296,-2.29337)(297,-2.29337)(298,-2.29337)(299,-2.29337)(300,-2.29337)(301,-2.29337)(302,-2.29337)(303,-2.29337)(304,-2.29337) 
};

\end{axis}
\end{tikzpicture}
 \end{subfigure}
\caption{Norm of dual global residual.  The value of the norm of the dual global residual \eqref{solid} and its upper bound $\textrm{log}\left(\epsilon_{\textrm{feas}}\right)$, \eqref{dashed} are shown. In figures (a), (b) and (c), algorithms \ref{alg:algNPDexact}, \ref{alg:algNPD} and ADMM have been used, respectively.} \label{fig:globdualResADMMEx1}\end{center}
\end{figure}

\begin{figure}[h!]
\begin{center}
\begin{subfigure}
 \centering
\pgfplotsset{width=surADMMOrigEx1.tikz\columnwidth,height=surADMMOrigEx1.tikz\columnwidth,compat=newest,plot coordinates/math parser=false}
%
%
\begin{tikzpicture}

\begin{axis}[%
scale only axis,
width=3cm,
height=4cm,
xmin=1, xmax=21,
ymin=-15, ymax=5,
title={(a)},
xlabel={iteration in primal-dual method},
ylabel={$\frac{1}{N}\sum_{i=1}^N\textrm{log}\left(\hat{\eta}^{i}\right)$},
axis on top]
\addplot [
color=black,
solid
]
plot [error bars/.cd, y dir = both, y explicit]
coordinates{
 (1,3.14521)+-(0.0,0.0304803)(2,2.67371)+-(0.0,0.0426082)(3,2.12466)+-(0.0,0.0526601)(4,1.78508)+-(0.0,0.0573721)(5,1.55675)+-(0.0,0.0599578)(6,1.21844)+-(0.0,0.0660235)(7,1.05079)+-(0.0,0.0669656)(8,0.871414)+-(0.0,0.0678667)(9,0.685629)+-(0.0,0.0686327)(10,0.597482)+-(0.0,0.0687677)(11,0.393247)+-(0.0,0.0696354)(12,0.171434)+-(0.0,0.0707442)(13,-0.0636908)+-(0.0,0.0721075)(14,-0.291215)+-(0.0,0.0734267)(15,-0.52407)+-(0.0,0.0747864)(16,-0.759474)+-(0.0,0.0760998)(17,-1.53966)+-(0.0,0.0974336)(18,-1.81564)+-(0.0,0.0993547)(19,-2.83851)+-(0.0,0.14994)(20,-3.90456)+-(0.0,0.228148)(21,-5.15001)+-(0.0,0.377563) 
};

\addplot [
color=black,
dashed
]
coordinates{
 (1,-3.99234)(2,-3.99234)(3,-3.99234)(4,-3.99234)(5,-3.99234)(6,-3.99234)(7,-3.99234)(8,-3.99234)(9,-3.99234)(10,-3.99234)(11,-3.99234)(12,-3.99234)(13,-3.99234)(14,-3.99234)(15,-3.99234)(16,-3.99234)(17,-3.99234)(18,-3.99234)(19,-3.99234)(20,-3.99234)(21,-3.99234) 
};

\end{axis}
\end{tikzpicture}
 \end{subfigure}
\begin{subfigure}
 \centering
\pgfplotsset{width=surADMMDistEx1.tikz\columnwidth,height=surADMMDistEx1.tikz\columnwidth,compat=newest,plot coordinates/math parser=false}
%
%
\begin{tikzpicture}

\begin{axis}[%
scale only axis,
width=3cm,
height=4cm,
xmin=1, xmax=80,
ymin=-15, ymax=5,
title={(b)},
xlabel={iteration in primal-dual method},
ylabel={$\frac{1}{N}\sum_{i=1}^N\textrm{log}\left(\hat{\eta}^{i}\right)$},
axis on top]
\addplot [
color=black,
solid
]
plot [error bars/.cd, y dir = both, y explicit]
coordinates{
 (1,3.34841)+-(0.0,0.0260389)(2,3.24694)+-(0.0,0.0262391)(3,3.12835)+-(0.0,0.0265299)(4,2.98444)+-(0.0,0.026976)(5,2.80644)+-(0.0,0.0276393)(6,2.58603)+-(0.0,0.0285877)(7,2.32608)+-(0.0,0.0299795)(8,2.14245)+-(0.0,0.0308511)(9,1.95611)+-(0.0,0.0320564)(10,1.79874)+-(0.0,0.033087)(11,1.65388)+-(0.0,0.034011)(12,1.54017)+-(0.0,0.034554)(13,1.51647)+-(0.0,0.0345703)(14,1.50311)+-(0.0,0.0345738)(15,1.48976)+-(0.0,0.0345772)(16,1.47612)+-(0.0,0.0345808)(17,1.46207)+-(0.0,0.0345845)(18,1.44772)+-(0.0,0.0345883)(19,1.43308)+-(0.0,0.0345923)(20,1.41799)+-(0.0,0.0345965)(21,1.40258)+-(0.0,0.0346008)(22,1.38669)+-(0.0,0.0346053)(23,1.37047)+-(0.0,0.0346099)(24,1.35375)+-(0.0,0.0346148)(25,1.33651)+-(0.0,0.0346199)(26,1.31891)+-(0.0,0.0346251)(27,1.30076)+-(0.0,0.0346305)(28,1.28205)+-(0.0,0.0346362)(29,1.26275)+-(0.0,0.0346421)(30,1.24305)+-(0.0,0.0346481)(31,1.22252)+-(0.0,0.0346545)(32,1.20157)+-(0.0,0.0346609)(33,1.17974)+-(0.0,0.0346677)(34,1.15722)+-(0.0,0.0346747)(35,1.13399)+-(0.0,0.0346819)(36,1.10979)+-(0.0,0.0346895)(37,1.08482)+-(0.0,0.0346973)(38,1.05881)+-(0.0,0.0347055)(39,1.03197)+-(0.0,0.0347138)(40,1.0037)+-(0.0,0.0347228)(41,0.974542)+-(0.0,0.034732)(42,0.943828)+-(0.0,0.0347419)(43,0.911809)+-(0.0,0.0347523)(44,0.878426)+-(0.0,0.0347632)(45,0.843253)+-(0.0,0.0347752)(46,0.805802)+-(0.0,0.0347887)(47,0.766745)+-(0.0,0.0348032)(48,0.724705)+-(0.0,0.0348201)(49,0.680386)+-(0.0,0.034839)(50,0.632659)+-(0.0,0.0348612)(51,0.581246)+-(0.0,0.0348875)(52,0.525842)+-(0.0,0.0349187)(53,0.464818)+-(0.0,0.0349575)(54,0.397558)+-(0.0,0.0350058)(55,0.322551)+-(0.0,0.0350672)(56,0.237884)+-(0.0,0.0351471)(57,0.138892)+-(0.0,0.0352584)(58,0.0201057)+-(0.0,0.0354207)(59,-0.130119)+-(0.0,0.0356827)(60,-0.342725)+-(0.0,0.0362167)(61,-0.776815)+-(0.0,0.0387727)(62,-1.44851)+-(0.0,0.0461906)(63,-1.50967)+-(0.0,0.0462105)(64,-1.57218)+-(0.0,0.0462298)(65,-1.63961)+-(0.0,0.0462528)(66,-1.71239)+-(0.0,0.0462798)(67,-1.79099)+-(0.0,0.0463113)(68,-1.8759)+-(0.0,0.0463477)(69,-1.96874)+-(0.0,0.0463912)(70,-2.06807)+-(0.0,0.046439)(71,-2.1756)+-(0.0,0.0464934)(72,-2.29347)+-(0.0,0.0465574)(73,-2.42282)+-(0.0,0.0466323)(74,-2.56331)+-(0.0,0.0467167)(75,-2.72168)+-(0.0,0.0468225)(76,-2.90298)+-(0.0,0.0469593)(77,-3.11967)+-(0.0,0.0471557)(78,-3.40385)+-(0.0,0.0475095)(79,-3.87158)+-(0.0,0.0486375)(80,-4.65487)+-(0.0,0.0530492) 
};

\addplot [
color=black,
dashed
]
coordinates{
 (1,-3.99234)(2,-3.99234)(3,-3.99234)(4,-3.99234)(5,-3.99234)(6,-3.99234)(7,-3.99234)(8,-3.99234)(9,-3.99234)(10,-3.99234)(11,-3.99234)(12,-3.99234)(13,-3.99234)(14,-3.99234)(15,-3.99234)(16,-3.99234)(17,-3.99234)(18,-3.99234)(19,-3.99234)(20,-3.99234)(21,-3.99234)(22,-3.99234)(23,-3.99234)(24,-3.99234)(25,-3.99234)(26,-3.99234)(27,-3.99234)(28,-3.99234)(29,-3.99234)(30,-3.99234)(31,-3.99234)(32,-3.99234)(33,-3.99234)(34,-3.99234)(35,-3.99234)(36,-3.99234)(37,-3.99234)(38,-3.99234)(39,-3.99234)(40,-3.99234)(41,-3.99234)(42,-3.99234)(43,-3.99234)(44,-3.99234)(45,-3.99234)(46,-3.99234)(47,-3.99234)(48,-3.99234)(49,-3.99234)(50,-3.99234)(51,-3.99234)(52,-3.99234)(53,-3.99234)(54,-3.99234)(55,-3.99234)(56,-3.99234)(57,-3.99234)(58,-3.99234)(59,-3.99234)(60,-3.99234)(61,-3.99234)(62,-3.99234)(63,-3.99234)(64,-3.99234)(65,-3.99234)(66,-3.99234)(67,-3.99234)(68,-3.99234)(69,-3.99234)(70,-3.99234)(71,-3.99234)(72,-3.99234)(73,-3.99234)(74,-3.99234)(75,-3.99234)(76,-3.99234)(77,-3.99234)(78,-3.99234)(79,-3.99234)(80,-3.99234) 
};

\end{axis}
\end{tikzpicture}
 \end{subfigure}
\begin{subfigure}
 \centering
\pgfplotsset{width=surADMMFirstEx1.tikz\columnwidth,height=surADMMFirstEx1.tikz\columnwidth,compat=newest,plot coordinates/math parser=false}\input{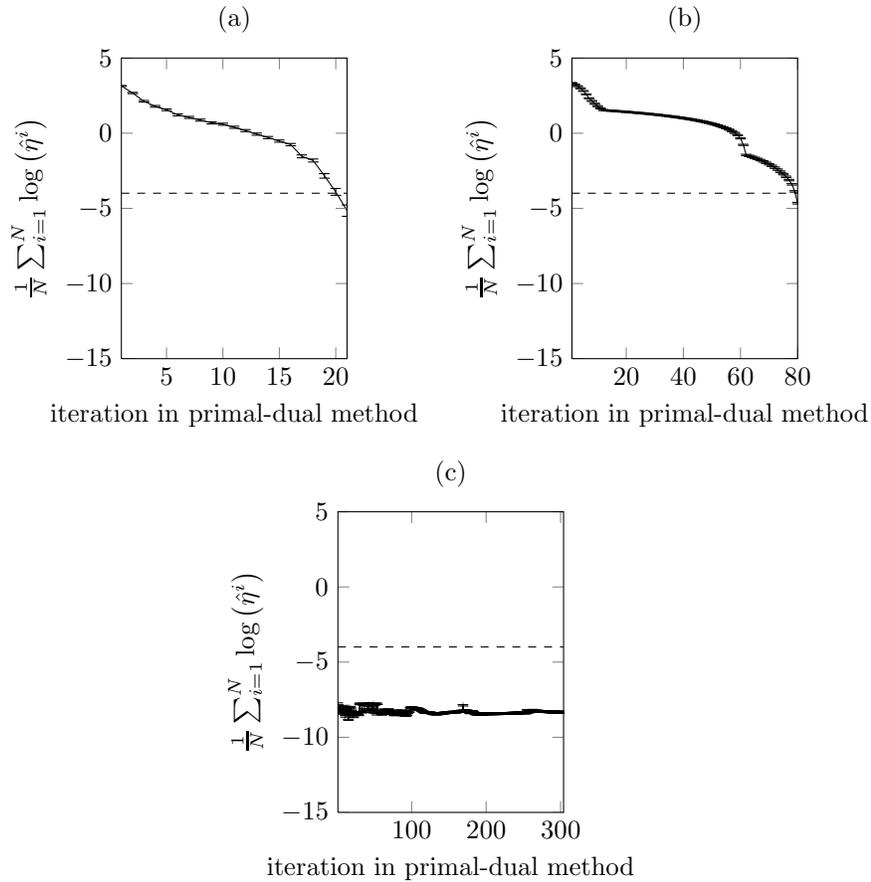}
 \end{subfigure}
\caption{Local surrogate duality gap.  The averaged, over all subproblems, value of the surrogate duality gap along with its standard deviation \eqref{solid}  and upper bound, $\textrm{log}\left(\epsilon/N\right)$, \eqref{dashed} are shown. In figures (a), (b) and (c), algorithms \ref{alg:algNPDexact}, \ref{alg:algNPD} and ADMM have been used, respectively.} \label{fig:surADMMEx1}\end{center}
\end{figure}
\begin{figure}[h!]
\begin{center}
\begin{subfigure}
 \centering
\pgfplotsset{width=globsurADMMOrigEx1.tikz\columnwidth,height=globsurADMMOrigEx1.tikz\columnwidth,compat=newest,plot coordinates/math parser=false}
%
%
\begin{tikzpicture}

\begin{axis}[%
scale only axis,
width=3cm,
height=4cm,
xmin=1, xmax=21,
ymin=-4, ymax=5.5,
title={(a)},
xlabel={iteration in primal dual method},
ylabel={$\textrm{log}\left(\hat{\eta}\right)$},
axis on top]
\addplot [
color=black,
solid
]
coordinates{
 (1,4.84522)(2,4.37469)(3,3.8267)(4,3.48769)(5,3.2597)(6,2.92223)(7,2.75472)(8,2.57547)(9,2.3898)(10,2.30168)(11,2.09758)(12,1.87594)(13,1.64103)(14,1.41372)(15,1.18109)(16,0.945897)(17,0.169724)(18,-0.105825)(19,-1.11411)(20,-2.13754)(21,-3.20396) 
};

\addplot [
color=black,
dashed
]
coordinates{
 (1,-2.29337)(2,-2.29337)(3,-2.29337)(4,-2.29337)(5,-2.29337)(6,-2.29337)(7,-2.29337)(8,-2.29337)(9,-2.29337)(10,-2.29337)(11,-2.29337)(12,-2.29337)(13,-2.29337)(14,-2.29337)(15,-2.29337)(16,-2.29337)(17,-2.29337)(18,-2.29337)(19,-2.29337)(20,-2.29337)(21,-2.29337)(22,-2.29337)(23,-2.29337)(24,-2.29337)(25,-2.29337)(26,-2.29337)(27,-2.29337)(28,-2.29337)(29,-2.29337)(30,-2.29337)(31,-2.29337)(32,-2.29337)(33,-2.29337)(34,-2.29337)(35,-2.29337)(36,-2.29337)(37,-2.29337)(38,-2.29337)(39,-2.29337)(40,-2.29337)(41,-2.29337)(42,-2.29337)(43,-2.29337)(44,-2.29337)(45,-2.29337)(46,-2.29337)(47,-2.29337)(48,-2.29337)(49,-2.29337)(50,-2.29337)(51,-2.29337)(52,-2.29337)(53,-2.29337)(54,-2.29337)(55,-2.29337)(56,-2.29337)(57,-2.29337)(58,-2.29337)(59,-2.29337)(60,-2.29337)(61,-2.29337)(62,-2.29337)(63,-2.29337)(64,-2.29337)(65,-2.29337)(66,-2.29337)(67,-2.29337)(68,-2.29337)(69,-2.29337)(70,-2.29337)(71,-2.29337)(72,-2.29337)(73,-2.29337)(74,-2.29337)(75,-2.29337)(76,-2.29337)(77,-2.29337)(78,-2.29337)(79,-2.29337)(80,-2.29337)(81,-2.29337)(82,-2.29337)(83,-2.29337)(84,-2.29337)(85,-2.29337)(86,-2.29337)(87,-2.29337)(88,-2.29337)(89,-2.29337)(90,-2.29337)(91,-2.29337)(92,-2.29337)(93,-2.29337)(94,-2.29337)(95,-2.29337)(96,-2.29337)(97,-2.29337)(98,-2.29337)(99,-2.29337)(100,-2.29337)(101,-2.29337)(102,-2.29337)(103,-2.29337)(104,-2.29337)(105,-2.29337)(106,-2.29337)(107,-2.29337)(108,-2.29337)(109,-2.29337)(110,-2.29337)(111,-2.29337)(112,-2.29337)(113,-2.29337)(114,-2.29337)(115,-2.29337)(116,-2.29337)(117,-2.29337)(118,-2.29337)(119,-2.29337)(120,-2.29337)(121,-2.29337)(122,-2.29337)(123,-2.29337)(124,-2.29337)(125,-2.29337)(126,-2.29337)(127,-2.29337)(128,-2.29337)(129,-2.29337)(130,-2.29337)(131,-2.29337)(132,-2.29337)(133,-2.29337)(134,-2.29337)(135,-2.29337)(136,-2.29337)(137,-2.29337)(138,-2.29337)(139,-2.29337)(140,-2.29337)(141,-2.29337)(142,-2.29337)(143,-2.29337)(144,-2.29337)(145,-2.29337)(146,-2.29337)(147,-2.29337)(148,-2.29337)(149,-2.29337)(150,-2.29337)(151,-2.29337)(152,-2.29337)(153,-2.29337)(154,-2.29337)(155,-2.29337)(156,-2.29337)(157,-2.29337)(158,-2.29337)(159,-2.29337)(160,-2.29337)(161,-2.29337)(162,-2.29337)(163,-2.29337)(164,-2.29337)(165,-2.29337)(166,-2.29337)(167,-2.29337)(168,-2.29337)(169,-2.29337)(170,-2.29337)(171,-2.29337)(172,-2.29337)(173,-2.29337)(174,-2.29337)(175,-2.29337)(176,-2.29337)(177,-2.29337)(178,-2.29337)(179,-2.29337)(180,-2.29337)(181,-2.29337)(182,-2.29337)(183,-2.29337)(184,-2.29337)(185,-2.29337)(186,-2.29337)(187,-2.29337)(188,-2.29337)(189,-2.29337)(190,-2.29337)(191,-2.29337)(192,-2.29337)(193,-2.29337)(194,-2.29337)(195,-2.29337)(196,-2.29337)(197,-2.29337)(198,-2.29337)(199,-2.29337)(200,-2.29337)(201,-2.29337)(202,-2.29337)(203,-2.29337)(204,-2.29337)(205,-2.29337)(206,-2.29337)(207,-2.29337)(208,-2.29337)(209,-2.29337)(210,-2.29337)(211,-2.29337)(212,-2.29337)(213,-2.29337)(214,-2.29337)(215,-2.29337)(216,-2.29337)(217,-2.29337)(218,-2.29337)(219,-2.29337)(220,-2.29337)(221,-2.29337)(222,-2.29337)(223,-2.29337)(224,-2.29337)(225,-2.29337)(226,-2.29337)(227,-2.29337)(228,-2.29337)(229,-2.29337)(230,-2.29337)(231,-2.29337)(232,-2.29337)(233,-2.29337)(234,-2.29337)(235,-2.29337)(236,-2.29337)(237,-2.29337)(238,-2.29337)(239,-2.29337)(240,-2.29337)(241,-2.29337)(242,-2.29337)(243,-2.29337)(244,-2.29337)(245,-2.29337)(246,-2.29337)(247,-2.29337)(248,-2.29337)(249,-2.29337)(250,-2.29337)(251,-2.29337)(252,-2.29337)(253,-2.29337)(254,-2.29337)(255,-2.29337)(256,-2.29337)(257,-2.29337)(258,-2.29337)(259,-2.29337)(260,-2.29337)(261,-2.29337)(262,-2.29337)(263,-2.29337)(264,-2.29337)(265,-2.29337)(266,-2.29337)(267,-2.29337)(268,-2.29337)(269,-2.29337)(270,-2.29337)(271,-2.29337)(272,-2.29337)(273,-2.29337)(274,-2.29337)(275,-2.29337)(276,-2.29337)(277,-2.29337)(278,-2.29337)(279,-2.29337)(280,-2.29337)(281,-2.29337)(282,-2.29337)(283,-2.29337)(284,-2.29337)(285,-2.29337)(286,-2.29337)(287,-2.29337)(288,-2.29337)(289,-2.29337)(290,-2.29337)(291,-2.29337)(292,-2.29337)(293,-2.29337)(294,-2.29337)(295,-2.29337)(296,-2.29337)(297,-2.29337)(298,-2.29337)(299,-2.29337)(300,-2.29337)(301,-2.29337)(302,-2.29337)(303,-2.29337)(304,-2.29337) 
};

\end{axis}
\end{tikzpicture}
 \end{subfigure}
\begin{subfigure}
 \centering
\pgfplotsset{width=globsurADMMDistEx1.tikz\columnwidth,height=globsurADMMDistEx1.tikz\columnwidth,compat=newest,plot coordinates/math parser=false}
%
%
\begin{tikzpicture}

\begin{axis}[%
scale only axis,
width=3cm,
height=4cm,
xmin=1, xmax=80,
ymin=-4, ymax=5.5,
title={(b)},
xlabel={iteration in primal dual method},
ylabel={$\textrm{log}\left(\hat{\eta}\right)$},
axis on top]
\addplot [
color=black,
solid
]
coordinates{
 (1,5.04814)(2,4.94669)(3,4.82811)(4,4.68423)(5,4.50627)(6,4.28592)(7,4.02606)(8,3.8425)(9,3.65623)(10,3.49894)(11,3.35415)(12,3.24048)(13,3.21678)(14,3.20342)(15,3.19007)(16,3.17644)(17,3.16238)(18,3.14804)(19,3.1334)(20,3.1183)(21,3.10289)(22,3.08701)(23,3.07079)(24,3.05407)(25,3.03683)(26,3.01923)(27,3.00108)(28,2.98237)(29,2.96307)(30,2.94337)(31,2.92284)(32,2.90189)(33,2.88005)(34,2.85754)(35,2.83431)(36,2.81011)(37,2.78515)(38,2.75913)(39,2.73229)(40,2.70403)(41,2.67487)(42,2.64415)(43,2.61213)(44,2.57875)(45,2.54358)(46,2.50613)(47,2.46707)(48,2.42504)(49,2.38072)(50,2.33299)(51,2.28158)(52,2.22618)(53,2.16516)(54,2.0979)(55,2.0229)(56,1.93824)(57,1.83926)(58,1.72048)(59,1.57028)(60,1.35772)(61,0.923839)(62,0.252849)(63,0.191683)(64,0.12918)(65,0.06175)(66,-0.0110276)(67,-0.0896173)(68,-0.174531)(69,-0.267366)(70,-0.366683)(71,-0.474211)(72,-0.592073)(73,-0.721421)(74,-0.861893)(75,-1.02026)(76,-1.20155)(77,-1.41821)(78,-1.70235)(79,-2.16995)(80,-2.95267) 
};

\addplot [
color=black,
dashed
]
coordinates{
 (1,-2.29337)(2,-2.29337)(3,-2.29337)(4,-2.29337)(5,-2.29337)(6,-2.29337)(7,-2.29337)(8,-2.29337)(9,-2.29337)(10,-2.29337)(11,-2.29337)(12,-2.29337)(13,-2.29337)(14,-2.29337)(15,-2.29337)(16,-2.29337)(17,-2.29337)(18,-2.29337)(19,-2.29337)(20,-2.29337)(21,-2.29337)(22,-2.29337)(23,-2.29337)(24,-2.29337)(25,-2.29337)(26,-2.29337)(27,-2.29337)(28,-2.29337)(29,-2.29337)(30,-2.29337)(31,-2.29337)(32,-2.29337)(33,-2.29337)(34,-2.29337)(35,-2.29337)(36,-2.29337)(37,-2.29337)(38,-2.29337)(39,-2.29337)(40,-2.29337)(41,-2.29337)(42,-2.29337)(43,-2.29337)(44,-2.29337)(45,-2.29337)(46,-2.29337)(47,-2.29337)(48,-2.29337)(49,-2.29337)(50,-2.29337)(51,-2.29337)(52,-2.29337)(53,-2.29337)(54,-2.29337)(55,-2.29337)(56,-2.29337)(57,-2.29337)(58,-2.29337)(59,-2.29337)(60,-2.29337)(61,-2.29337)(62,-2.29337)(63,-2.29337)(64,-2.29337)(65,-2.29337)(66,-2.29337)(67,-2.29337)(68,-2.29337)(69,-2.29337)(70,-2.29337)(71,-2.29337)(72,-2.29337)(73,-2.29337)(74,-2.29337)(75,-2.29337)(76,-2.29337)(77,-2.29337)(78,-2.29337)(79,-2.29337)(80,-2.29337) 
};

\end{axis}
\end{tikzpicture}
 \end{subfigure}
\begin{subfigure}
 \centering
\pgfplotsset{width=globsurADMMOnTopEx1.tikz\columnwidth,height=globsurADMMOnTopEx1.tikz\columnwidth,compat=newest,plot coordinates/math parser=false}
%
%
\begin{tikzpicture}

\begin{axis}[%
scale only axis,
width=3cm,
height=4cm,
xmin=1, xmax=304,
ymin=-8, ymax=-2,
title={(c)},
xlabel={iteration in primal-dual method},
ylabel={$\textrm{log}\left(\hat{\eta}\right)$},axis on top]
\addplot [
color=black,
solid
]
coordinates{
 (1,-6.07126)(2,-6.4433)(3,-6.60885)(4,-6.28106)(5,-6.54521)(6,-6.72591)(7,-6.55668)(8,-6.21829)(9,-6.37176)(10,-6.41141)(11,-6.90882)(12,-6.71658)(13,-6.48527)(14,-6.71865)(15,-7.14021)(16,-6.35234)(17,-6.6406)(18,-6.29075)(19,-6.46797)(20,-6.82821)(21,-6.80727)(22,-6.93534)(23,-6.63324)(24,-6.64207)(25,-6.75056)(26,-6.70397)(27,-6.75102)(28,-6.71113)(29,-6.81232)(30,-6.07821)(31,-6.08504)(32,-6.62624)(33,-6.6259)(34,-6.08603)(35,-6.08749)(36,-6.1507)(37,-6.44657)(38,-6.46944)(39,-6.47064)(40,-6.6054)(41,-6.16431)(42,-6.44659)(43,-6.04985)(44,-6.18955)(45,-6.11441)(46,-6.41409)(47,-6.46106)(48,-6.12957)(49,-6.64579)(50,-6.1358)(51,-6.08837)(52,-6.08076)(53,-6.76702)(54,-6.07386)(55,-6.59924)(56,-6.65344)(57,-6.6302)(58,-6.54748)(59,-6.59862)(60,-6.76818)(61,-6.72205)(62,-6.57866)(63,-6.45799)(64,-6.50263)(65,-6.53372)(66,-6.56369)(67,-6.60548)(68,-6.59415)(69,-6.44898)(70,-6.50406)(71,-6.60812)(72,-6.6615)(73,-6.69384)(74,-6.71707)(75,-6.73662)(76,-6.76617)(77,-6.80507)(78,-6.79378)(79,-6.76986)(80,-6.6871)(81,-6.52867)(82,-6.60898)(83,-6.63085)(84,-6.6707)(85,-6.63651)(86,-6.56866)(87,-6.5283)(88,-6.74522)(89,-6.78681)(90,-6.81284)(91,-6.83263)(92,-6.83612)(93,-6.8184)(94,-6.77529)(95,-6.65636)(96,-6.58232)(97,-6.58183)(98,-6.37881)(99,-6.32042)(100,-6.32809)(101,-6.33753)(102,-6.34662)(103,-6.35475)(104,-6.3622)(105,-6.36909)(106,-6.37742)(107,-6.38638)(108,-6.39812)(109,-6.41225)(110,-6.43714)(111,-6.4634)(112,-6.48981)(113,-6.51644)(114,-6.54585)(115,-6.57488)(116,-6.60352)(117,-6.64711)(118,-6.65261)(119,-6.65207)(120,-6.65138)(121,-6.65116)(122,-6.65371)(123,-6.65769)(124,-6.66551)(125,-6.67393)(126,-6.68469)(127,-6.69812)(128,-6.71438)(129,-6.73443)(130,-6.74415)(131,-6.74479)(132,-6.75007)(133,-6.75503)(134,-6.75137)(135,-6.74472)(136,-6.7378)(137,-6.73097)(138,-6.72404)(139,-6.71712)(140,-6.71024)(141,-6.70346)(142,-6.69695)(143,-6.6908)(144,-6.6848)(145,-6.67905)(146,-6.67365)(147,-6.66882)(148,-6.66411)(149,-6.65736)(150,-6.64961)(151,-6.6434)(152,-6.63906)(153,-6.63562)(154,-6.6327)(155,-6.6255)(156,-6.62076)(157,-6.62145)(158,-6.62592)(159,-6.63499)(160,-6.62516)(161,-6.60166)(162,-6.58506)(163,-6.57956)(164,-6.56903)(165,-6.57806)(166,-6.56278)(167,-6.54722)(168,-6.56911)(169,-6.1744)(170,-6.5473)(171,-6.52529)(172,-6.52568)(173,-6.54127)(174,-6.5357)(175,-6.54093)(176,-6.58724)(177,-6.62703)(178,-6.66462)(179,-6.67819)(180,-6.65988)(181,-6.58514)(182,-6.568)(183,-6.62861)(184,-6.66959)(185,-6.72085)(186,-6.74217)(187,-6.74166)(188,-6.74288)(189,-6.74592)(190,-6.74995)(191,-6.7471)(192,-6.74944)(193,-6.75443)(194,-6.74788)(195,-6.74283)(196,-6.73926)(197,-6.73672)(198,-6.73527)(199,-6.73476)(200,-6.73589)(201,-6.73589)(202,-6.73624)(203,-6.73698)(204,-6.73458)(205,-6.73403)(206,-6.73363)(207,-6.73159)(208,-6.73319)(209,-6.7332)(210,-6.73324)(211,-6.73363)(212,-6.73363)(213,-6.73407)(214,-6.73443)(215,-6.73473)(216,-6.73389)(217,-6.73162)(218,-6.72951)(219,-6.72738)(220,-6.72542)(221,-6.7228)(222,-6.72136)(223,-6.71941)(224,-6.71748)(225,-6.71568)(226,-6.71378)(227,-6.71192)(228,-6.7101)(229,-6.70833)(230,-6.70639)(231,-6.70468)(232,-6.70275)(233,-6.70086)(234,-6.69888)(235,-6.69692)(236,-6.69503)(237,-6.69318)(238,-6.69124)(239,-6.68907)(240,-6.68742)(241,-6.68564)(242,-6.68409)(243,-6.68277)(244,-6.68236)(245,-6.68104)(246,-6.68367)(247,-6.68272)(248,-6.677)(249,-6.6721)(250,-6.66891)(251,-6.66967)(252,-6.67969)(253,-6.70379)(254,-6.67082)(255,-6.49915)(256,-6.61431)(257,-6.63945)(258,-6.58756)(259,-6.5634)(260,-6.55073)(261,-6.54449)(262,-6.54206)(263,-6.4913)(264,-6.54313)(265,-6.54527)(266,-6.54789)(267,-6.55101)(268,-6.55454)(269,-6.55821)(270,-6.56181)(271,-6.56538)(272,-6.56895)(273,-6.5724)(274,-6.5763)(275,-6.57963)(276,-6.58302)(277,-6.5862)(278,-6.58954)(279,-6.59267)(280,-6.59556)(281,-6.59837)(282,-6.601)(283,-6.60372)(284,-6.60627)(285,-6.60861)(286,-6.61092)(287,-6.61307)(288,-6.61507)(289,-6.61699)(290,-6.61885)(291,-6.6205)(292,-6.6223)(293,-6.62392)(294,-6.62569)(295,-6.62701)(296,-6.62833)(297,-6.62956)(298,-6.63066)(299,-6.63196)(300,-6.63311)(301,-6.6341)(302,-6.63526)(303,-6.63596)(304,-6.63701) 
};

\addplot [
color=black,
dashed
]
coordinates{
 (1,-2.29337)(2,-2.29337)(3,-2.29337)(4,-2.29337)(5,-2.29337)(6,-2.29337)(7,-2.29337)(8,-2.29337)(9,-2.29337)(10,-2.29337)(11,-2.29337)(12,-2.29337)(13,-2.29337)(14,-2.29337)(15,-2.29337)(16,-2.29337)(17,-2.29337)(18,-2.29337)(19,-2.29337)(20,-2.29337)(21,-2.29337)(22,-2.29337)(23,-2.29337)(24,-2.29337)(25,-2.29337)(26,-2.29337)(27,-2.29337)(28,-2.29337)(29,-2.29337)(30,-2.29337)(31,-2.29337)(32,-2.29337)(33,-2.29337)(34,-2.29337)(35,-2.29337)(36,-2.29337)(37,-2.29337)(38,-2.29337)(39,-2.29337)(40,-2.29337)(41,-2.29337)(42,-2.29337)(43,-2.29337)(44,-2.29337)(45,-2.29337)(46,-2.29337)(47,-2.29337)(48,-2.29337)(49,-2.29337)(50,-2.29337)(51,-2.29337)(52,-2.29337)(53,-2.29337)(54,-2.29337)(55,-2.29337)(56,-2.29337)(57,-2.29337)(58,-2.29337)(59,-2.29337)(60,-2.29337)(61,-2.29337)(62,-2.29337)(63,-2.29337)(64,-2.29337)(65,-2.29337)(66,-2.29337)(67,-2.29337)(68,-2.29337)(69,-2.29337)(70,-2.29337)(71,-2.29337)(72,-2.29337)(73,-2.29337)(74,-2.29337)(75,-2.29337)(76,-2.29337)(77,-2.29337)(78,-2.29337)(79,-2.29337)(80,-2.29337)(81,-2.29337)(82,-2.29337)(83,-2.29337)(84,-2.29337)(85,-2.29337)(86,-2.29337)(87,-2.29337)(88,-2.29337)(89,-2.29337)(90,-2.29337)(91,-2.29337)(92,-2.29337)(93,-2.29337)(94,-2.29337)(95,-2.29337)(96,-2.29337)(97,-2.29337)(98,-2.29337)(99,-2.29337)(100,-2.29337)(101,-2.29337)(102,-2.29337)(103,-2.29337)(104,-2.29337)(105,-2.29337)(106,-2.29337)(107,-2.29337)(108,-2.29337)(109,-2.29337)(110,-2.29337)(111,-2.29337)(112,-2.29337)(113,-2.29337)(114,-2.29337)(115,-2.29337)(116,-2.29337)(117,-2.29337)(118,-2.29337)(119,-2.29337)(120,-2.29337)(121,-2.29337)(122,-2.29337)(123,-2.29337)(124,-2.29337)(125,-2.29337)(126,-2.29337)(127,-2.29337)(128,-2.29337)(129,-2.29337)(130,-2.29337)(131,-2.29337)(132,-2.29337)(133,-2.29337)(134,-2.29337)(135,-2.29337)(136,-2.29337)(137,-2.29337)(138,-2.29337)(139,-2.29337)(140,-2.29337)(141,-2.29337)(142,-2.29337)(143,-2.29337)(144,-2.29337)(145,-2.29337)(146,-2.29337)(147,-2.29337)(148,-2.29337)(149,-2.29337)(150,-2.29337)(151,-2.29337)(152,-2.29337)(153,-2.29337)(154,-2.29337)(155,-2.29337)(156,-2.29337)(157,-2.29337)(158,-2.29337)(159,-2.29337)(160,-2.29337)(161,-2.29337)(162,-2.29337)(163,-2.29337)(164,-2.29337)(165,-2.29337)(166,-2.29337)(167,-2.29337)(168,-2.29337)(169,-2.29337)(170,-2.29337)(171,-2.29337)(172,-2.29337)(173,-2.29337)(174,-2.29337)(175,-2.29337)(176,-2.29337)(177,-2.29337)(178,-2.29337)(179,-2.29337)(180,-2.29337)(181,-2.29337)(182,-2.29337)(183,-2.29337)(184,-2.29337)(185,-2.29337)(186,-2.29337)(187,-2.29337)(188,-2.29337)(189,-2.29337)(190,-2.29337)(191,-2.29337)(192,-2.29337)(193,-2.29337)(194,-2.29337)(195,-2.29337)(196,-2.29337)(197,-2.29337)(198,-2.29337)(199,-2.29337)(200,-2.29337)(201,-2.29337)(202,-2.29337)(203,-2.29337)(204,-2.29337)(205,-2.29337)(206,-2.29337)(207,-2.29337)(208,-2.29337)(209,-2.29337)(210,-2.29337)(211,-2.29337)(212,-2.29337)(213,-2.29337)(214,-2.29337)(215,-2.29337)(216,-2.29337)(217,-2.29337)(218,-2.29337)(219,-2.29337)(220,-2.29337)(221,-2.29337)(222,-2.29337)(223,-2.29337)(224,-2.29337)(225,-2.29337)(226,-2.29337)(227,-2.29337)(228,-2.29337)(229,-2.29337)(230,-2.29337)(231,-2.29337)(232,-2.29337)(233,-2.29337)(234,-2.29337)(235,-2.29337)(236,-2.29337)(237,-2.29337)(238,-2.29337)(239,-2.29337)(240,-2.29337)(241,-2.29337)(242,-2.29337)(243,-2.29337)(244,-2.29337)(245,-2.29337)(246,-2.29337)(247,-2.29337)(248,-2.29337)(249,-2.29337)(250,-2.29337)(251,-2.29337)(252,-2.29337)(253,-2.29337)(254,-2.29337)(255,-2.29337)(256,-2.29337)(257,-2.29337)(258,-2.29337)(259,-2.29337)(260,-2.29337)(261,-2.29337)(262,-2.29337)(263,-2.29337)(264,-2.29337)(265,-2.29337)(266,-2.29337)(267,-2.29337)(268,-2.29337)(269,-2.29337)(270,-2.29337)(271,-2.29337)(272,-2.29337)(273,-2.29337)(274,-2.29337)(275,-2.29337)(276,-2.29337)(277,-2.29337)(278,-2.29337)(279,-2.29337)(280,-2.29337)(281,-2.29337)(282,-2.29337)(283,-2.29337)(284,-2.29337)(285,-2.29337)(286,-2.29337)(287,-2.29337)(288,-2.29337)(289,-2.29337)(290,-2.29337)(291,-2.29337)(292,-2.29337)(293,-2.29337)(294,-2.29337)(295,-2.29337)(296,-2.29337)(297,-2.29337)(298,-2.29337)(299,-2.29337)(300,-2.29337)(301,-2.29337)(302,-2.29337)(303,-2.29337)(304,-2.29337) 
};

\end{axis}
\end{tikzpicture}
 \end{subfigure}
\caption{Global surrogate duality gap.  The value of the surrogate duality gap \eqref{solid} and its upper bound, $\textrm{log}\left(\epsilon/N\right)$, \eqref{dashed} are shown. In figures (a), (b) and (c), algorithms \ref{alg:algNPDexact}, \ref{alg:algNPD} and ADMM have been used, respectively.} \label{fig:globsurADMMEx1}\end{center}
\end{figure}
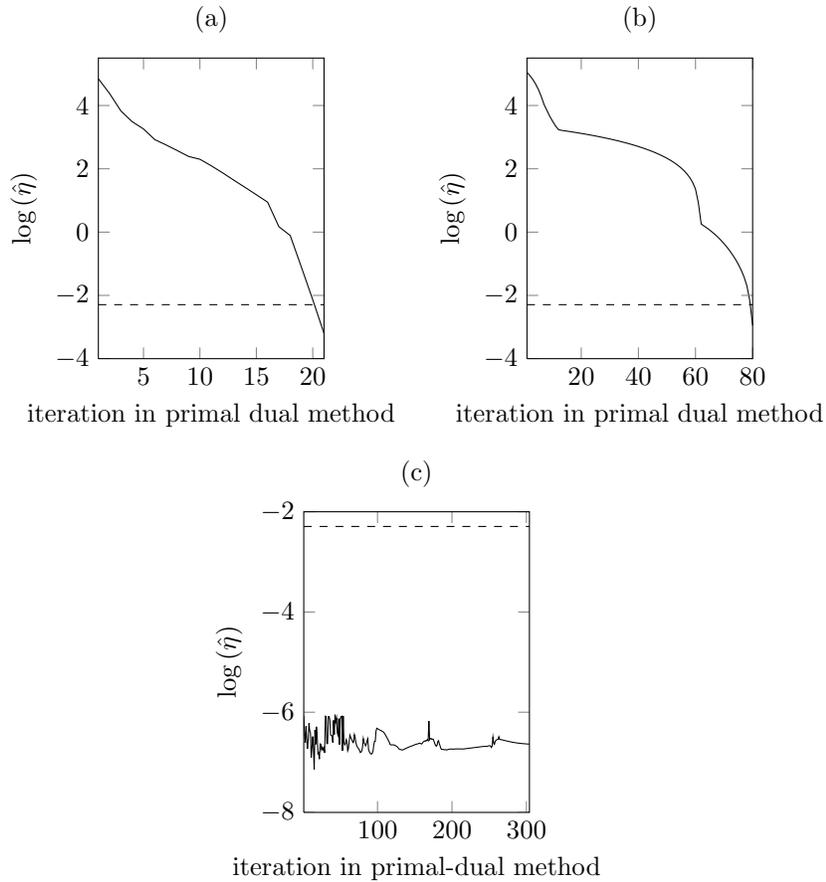

\subsubsection{Optimal Value}
We compare how the value of the objective function evolves for each iteration of the three methods. In Figure \ref{fig:optvalADMMEx1}, the relative error of the objective function is shown. The so-called true optimal value is obtained using \texttt{cvx}. We see that the relative error evolves to approximately the same level for all three methods. However, ADMM gives a slightly higher value (the relative error is $1.2\times 10^{-5}$) than Algorithm \ref{alg:algNPDexact} (the relative error is $8.4\times 10^{-8}$) and Algorithm \ref{alg:algNPD} (the relative error is $6.3\times 10^{-8}$). Note that since the iterates are not necessarily feasible in each iteration, it is possible to obtain a value of the objective function that is very close to or even smaller than the true optimal value, while the algorithm has not yet terminated. This explains the dip of the relative error which can be seen in Figure \ref{fig:optvalADMMEx1}. 

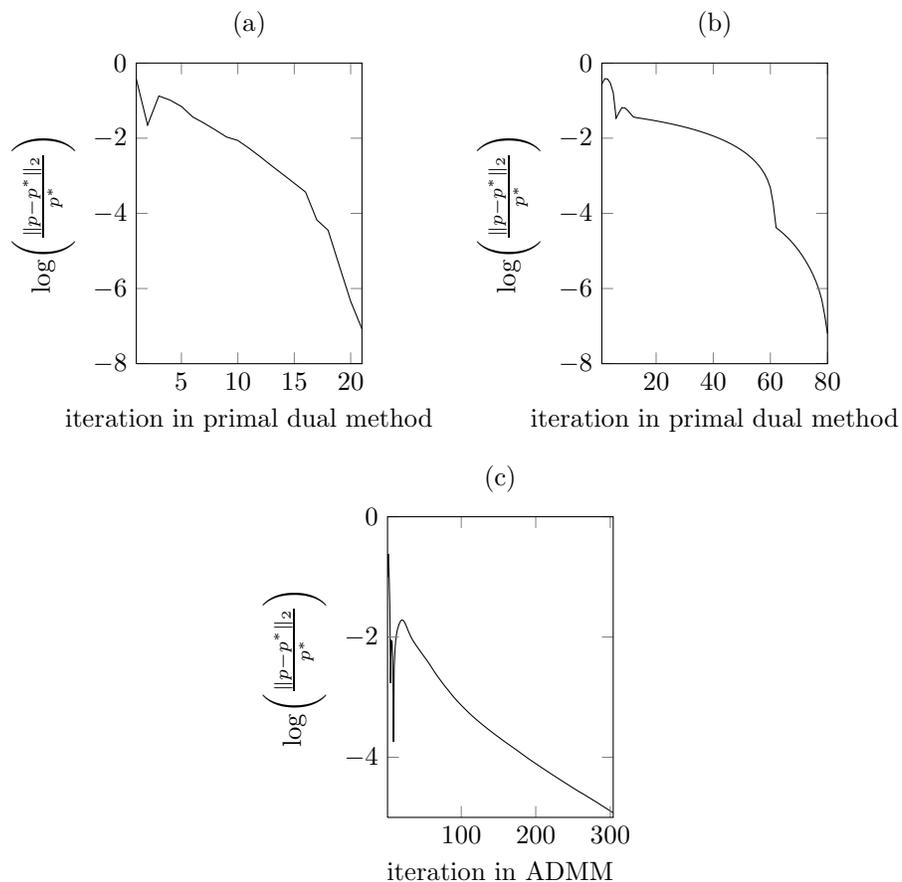
\begin{figure}[h!]
\begin{center}
\begin{subfigure}
 \centering
\pgfplotsset{width=optValADMMOrigEx1.tikz\columnwidth,height=optValADMMOrigEx1.tikz\columnwidth,compat=newest,plot coordinates/math parser=false}
%
%
\begin{tikzpicture}

\begin{axis}[%
scale only axis,
width=3cm,
height=4cm,
xmin=1, xmax=21,
ymin=-8, ymax=0,
title={(a)},
xlabel={iteration in primal dual method},
ylabel={$\text{log}\bigg(\frac{\|p-p^*\|_2}{p^*}\bigg)$},
axis on top]
\addplot [
color=black,
solid
]
coordinates{
 (1,-0.427671)(2,-1.64849)(3,-0.875814)(4,-0.987254)(5,-1.15543)(6,-1.43379)(7,-1.59828)(8,-1.77856)(9,-1.96859)(10,-2.05989)(11,-2.26882)(12,-2.49555)(13,-2.73458)(14,-2.96425)(15,-3.19777)(16,-3.43289)(17,-4.1746)(18,-4.44921)(19,-5.40126)(20,-6.34149)(21,-7.07533) 
};

\end{axis}
\end{tikzpicture}
 \end{subfigure}
\begin{subfigure}
 \centering
\pgfplotsset{width=optValADMMDistEx1.tikz\columnwidth,height=optValADMMDistEx1.tikz\columnwidth,compat=newest,plot coordinates/math parser=false}
%
%
\begin{tikzpicture}

\begin{axis}[%
scale only axis,
width=3cm,
height=4cm,
xmin=1, xmax=80,
ymin=-8, ymax=0,
title={(b)},
xlabel={iteration in primal dual method},
ylabel={$\text{log}\bigg(\frac{\|p-p^*\|_2}{p^*}\bigg)$},
axis on top]
\addplot [
color=black,
solid
]
coordinates{
 (1,-0.566091)(2,-0.417642)(3,-0.422982)(4,-0.53589)(5,-0.792623)(6,-1.46665)(7,-1.32128)(8,-1.18711)(9,-1.19232)(10,-1.25735)(11,-1.34599)(12,-1.43121)(13,-1.45153)(14,-1.4632)(15,-1.47494)(16,-1.487)(17,-1.49952)(18,-1.51237)(19,-1.52558)(20,-1.53928)(21,-1.55335)(22,-1.56795)(23,-1.58294)(24,-1.59849)(25,-1.61462)(26,-1.63119)(27,-1.64837)(28,-1.66619)(29,-1.68467)(30,-1.70364)(31,-1.72351)(32,-1.74391)(33,-1.76528)(34,-1.78743)(35,-1.81039)(36,-1.83444)(37,-1.85936)(38,-1.88545)(39,-1.91248)(40,-1.94106)(41,-1.97067)(42,-2.00196)(43,-2.03469)(44,-2.06892)(45,-2.10508)(46,-2.14368)(47,-2.18402)(48,-2.22753)(49,-2.27346)(50,-2.32298)(51,-2.37637)(52,-2.43391)(53,-2.49725)(54,-2.56699)(55,-2.6446)(56,-2.73195)(57,-2.83365)(58,-2.955)(59,-3.10729)(60,-3.32034)(61,-3.74275)(62,-4.37879)(63,-4.44025)(64,-4.50293)(65,-4.57049)(66,-4.64336)(67,-4.72197)(68,-4.80683)(69,-4.89949)(70,-4.99849)(71,-5.10552)(72,-5.22261)(73,-5.35079)(74,-5.48957)(75,-5.6453)(76,-5.82231)(77,-6.03129)(78,-6.29852)(79,-6.70605)(80,-7.19929) 
};

\end{axis}
\end{tikzpicture}
 \end{subfigure}
\begin{subfigure}
 \centering
\pgfplotsset{width=optValADMMEx1.tikz\columnwidth,height=optValADMMEx1.tikz\columnwidth,compat=newest,plot coordinates/math parser=false}
%
%
\begin{tikzpicture}

\begin{axis}[%
scale only axis,
width=3cm,
height=4cm,
xmin=1, xmax=304,
ymin=-5, ymax=-0,title={(c)},
xlabel={iteration in ADMM},
ylabel={$\text{log}\bigg(\frac{\|p-p^*\|_2}{p^*}\bigg)$},
axis on top]
\addplot [
color=black,
solid
]
coordinates{
 (1,-1.00653)(2,-0.61733)(3,-0.924474)(4,-1.43698)(5,-2.766)(6,-2.05966)(7,-2.07531)(8,-2.36593)(9,-3.74239)(10,-2.49421)(11,-2.21165)(12,-2.05415)(13,-1.95859)(14,-1.89295)(15,-1.84536)(16,-1.80606)(17,-1.77315)(18,-1.74515)(19,-1.72776)(20,-1.71893)(21,-1.71797)(22,-1.72439)(23,-1.73764)(24,-1.75785)(25,-1.78316)(26,-1.81136)(27,-1.84142)(28,-1.87247)(29,-1.90347)(30,-1.93293)(31,-1.96085)(32,-1.98712)(33,-2.01165)(34,-2.03446)(35,-2.05568)(36,-2.07548)(37,-2.09456)(38,-2.11335)(39,-2.13175)(40,-2.14977)(41,-2.16745)(42,-2.18478)(43,-2.20215)(44,-2.21934)(45,-2.23671)(46,-2.25432)(47,-2.27198)(48,-2.2895)(49,-2.30623)(50,-2.32305)(51,-2.33999)(52,-2.35702)(53,-2.37402)(54,-2.3911)(55,-2.40855)(56,-2.42665)(57,-2.44536)(58,-2.46426)(59,-2.48341)(60,-2.50266)(61,-2.522)(62,-2.54125)(63,-2.56029)(64,-2.57908)(65,-2.5974)(66,-2.61543)(67,-2.6332)(68,-2.65071)(69,-2.66795)(70,-2.68499)(71,-2.70193)(72,-2.71879)(73,-2.73554)(74,-2.75216)(75,-2.76862)(76,-2.78488)(77,-2.80092)(78,-2.81673)(79,-2.83231)(80,-2.84766)(81,-2.86281)(82,-2.87752)(83,-2.8927)(84,-2.9084)(85,-2.92437)(86,-2.94031)(87,-2.95587)(88,-2.97098)(89,-2.98577)(90,-3.00031)(91,-3.01464)(92,-3.02875)(93,-3.04266)(94,-3.05636)(95,-3.06983)(96,-3.08312)(97,-3.09624)(98,-3.10922)(99,-3.12209)(100,-3.13487)(101,-3.14756)(102,-3.16017)(103,-3.17272)(104,-3.18518)(105,-3.19757)(106,-3.20987)(107,-3.22207)(108,-3.23416)(109,-3.24614)(110,-3.25801)(111,-3.26976)(112,-3.28138)(113,-3.29289)(114,-3.30427)(115,-3.31554)(116,-3.32669)(117,-3.33774)(118,-3.34869)(119,-3.35954)(120,-3.3703)(121,-3.38096)(122,-3.39155)(123,-3.40205)(124,-3.41247)(125,-3.42283)(126,-3.43311)(127,-3.44333)(128,-3.45349)(129,-3.46358)(130,-3.47363)(131,-3.48361)(132,-3.49354)(133,-3.50342)(134,-3.51325)(135,-3.52302)(136,-3.53275)(137,-3.54243)(138,-3.55206)(139,-3.56166)(140,-3.57121)(141,-3.58072)(142,-3.59019)(143,-3.59962)(144,-3.60902)(145,-3.61837)(146,-3.62769)(147,-3.63698)(148,-3.64623)(149,-3.65544)(150,-3.66462)(151,-3.67376)(152,-3.68288)(153,-3.69196)(154,-3.70102)(155,-3.71005)(156,-3.71906)(157,-3.72805)(158,-3.73701)(159,-3.74595)(160,-3.75487)(161,-3.76375)(162,-3.77261)(163,-3.78146)(164,-3.79028)(165,-3.79903)(166,-3.80772)(167,-3.81639)(168,-3.82511)(169,-3.83365)(170,-3.8425)(171,-3.85127)(172,-3.86004)(173,-3.86886)(174,-3.8777)(175,-3.88655)(176,-3.89542)(177,-3.90432)(178,-3.91328)(179,-3.9223)(180,-3.93135)(181,-3.9404)(182,-3.94948)(183,-3.95857)(184,-3.96767)(185,-3.97675)(186,-3.9858)(187,-3.99476)(188,-4.00364)(189,-4.01243)(190,-4.02114)(191,-4.02977)(192,-4.03835)(193,-4.04689)(194,-4.05539)(195,-4.06385)(196,-4.07228)(197,-4.0807)(198,-4.08909)(199,-4.09747)(200,-4.10585)(201,-4.11423)(202,-4.12259)(203,-4.13095)(204,-4.13929)(205,-4.14762)(206,-4.15593)(207,-4.16423)(208,-4.17252)(209,-4.18079)(210,-4.18905)(211,-4.19729)(212,-4.20552)(213,-4.21373)(214,-4.22192)(215,-4.2301)(216,-4.23826)(217,-4.24641)(218,-4.25455)(219,-4.26267)(220,-4.27078)(221,-4.27887)(222,-4.28695)(223,-4.29501)(224,-4.30307)(225,-4.31111)(226,-4.31914)(227,-4.32715)(228,-4.33516)(229,-4.34315)(230,-4.35113)(231,-4.3591)(232,-4.36706)(233,-4.37502)(234,-4.38296)(235,-4.39089)(236,-4.39881)(237,-4.40672)(238,-4.41463)(239,-4.42252)(240,-4.4304)(241,-4.43828)(242,-4.44615)(243,-4.45401)(244,-4.46186)(245,-4.4697)(246,-4.47753)(247,-4.48535)(248,-4.49317)(249,-4.50097)(250,-4.50878)(251,-4.51659)(252,-4.52442)(253,-4.53231)(254,-4.54006)(255,-4.54785)(256,-4.55574)(257,-4.56335)(258,-4.57053)(259,-4.57759)(260,-4.58464)(261,-4.59175)(262,-4.59893)(263,-4.60615)(264,-4.61347)(265,-4.6209)(266,-4.62836)(267,-4.63586)(268,-4.64337)(269,-4.6509)(270,-4.65845)(271,-4.66599)(272,-4.67355)(273,-4.68113)(274,-4.68872)(275,-4.69634)(276,-4.70398)(277,-4.71164)(278,-4.71933)(279,-4.72704)(280,-4.73478)(281,-4.74254)(282,-4.75032)(283,-4.75813)(284,-4.76596)(285,-4.77381)(286,-4.78167)(287,-4.78955)(288,-4.79744)(289,-4.80534)(290,-4.81325)(291,-4.82116)(292,-4.82907)(293,-4.83699)(294,-4.84491)(295,-4.85282)(296,-4.86073)(297,-4.86863)(298,-4.87654)(299,-4.88444)(300,-4.89233)(301,-4.90023)(302,-4.90811)(303,-4.916)(304,-4.92388) 
};

\end{axis}
\end{tikzpicture}
 \end{subfigure}
\caption{Relative error of objective function.  The relative error of the objective function \eqref{solid} is shown. In figures (a), (b) and (c), algorithms \ref{alg:algNPDexact}, \ref{alg:algNPD} and ADMM have been used, respectively. The true optimal value is obtained using \texttt{cvx} and is denoted $p^*$. The value of the objective function is denoted $p$.} \label{fig:optvalADMMEx1}\end{center}
\end{figure}

\subsubsection{Total Number of Iterations}
The total number of iterations, that is ADMM iterations for Algorithm \ref{alg:algNPDexact} and Algorithm \ref{alg:algNPD}, and interior-point iterations in \texttt{cvx} for ADMM are compared. We get the total number of iterations equal to 38796, 17453 and 8194, for each method respectively. Thus, for this specific simulation set-up, ADMM beats the proposed algorithms in terms of the number of iterations. Note, however, that Algorithm \ref{alg:algNPDexact} and Algorithm \ref{alg:algNPD} are computationally very cheap. The major computational effort takes place when calculating the search directions of the local primal variables, that is Step 9 in Algorithm \eqref{alg:alg4PD}, see Remark 3. If $\rho$ is kept constant, we only have to factorize $H^{i,(l)}_{\text{pd}} + \rho\left( I + (\bar A^i)^T\bar A^i\right)$ for $i = 1, \dots, N,$ once in each primal-dual iteration. However, if we use ADMM  (Algorithm \ref{alg:alg2}) we have to factorize the corresponding matrices, which are of the same sizes as in the other algorithms, for each interior-point iteration in each ADMM iteration. That is, instead of factorizing the matrices 21 or 80 times with Algorithm \ref{alg:algNPDexact} or Algorithm \ref{alg:algNPD}, respectively, we have to factorize the matrices 8194 times with ADMM. We pay a price for the savings in the number of factorizations necessary. In Algorithm \ref{alg:algNPDexact} and Algorithm \ref{alg:algNPD} the nodes have to communicate in each inner iteration (ADMM iteration) whilst for ADMM applied to the original problem the nodes only have to communicate in each outer iteration (ADMM iteration). 

The convergence rate of Algorithm \ref{alg:algNPD} is sensitive to the setting of its parameters. How to choose them optimally, or even wisely, is an open question. Note though, that we can make use of an ad-hoc adaptive stop criteria for the search direction calculation in Algorithm \ref{alg:algNPDexact} which does not guarantee global convergence. Such ad-hoc criteria, can yield much faster convergence rate without much tuning of the settings. For example, with $\epsilon_{pri}=2\epsilon_{dual}=50/4\times10^{-3}$ for $l\leq 5$, $\epsilon_{pri}=2\epsilon_{dual}=50/4\times10^{-5}$ for $6\leq l\leq10$ and $\epsilon_{pri}=2\epsilon_{dual}=50/4\times10^{-8}$ for $l\geq 11$, we get 5469 number of iterations, a 33\% saving compared to ADMM. This result provides an incitement to investigate further the convergence rate properties of Algorithm \ref{alg:algNPD}.

\subsection{Simulation Set-Up 2}
We consider ten subproblems ($N=10$). The rest of the problem set-up coincides with that of Section \ref{setup1}.

The optimization problem is solved using Algorithm \ref{alg:algNPD} and ADMM (Algorithm \ref{alg:alg2}). For comparison, both algorithms are terminated using the stop criteria of  Algorithm \ref{alg:algNPDexact}. We initialize the methods and use the same settings as in Section \ref{setup1}. We perform 50 Monte-Carlo runs of the simulation set-up.

\subsubsection{Total Number of Iterations}
The total number of iterations, that is ADMM iterations for Algorithm \ref{alg:algNPD} and inner-point iterations in \texttt{cvx} for ADMM, are compared. We get the averaged, over all Monte-Carlo runs, total number of iterations equal to 569 (the standard deviation is 62) and 1257 (the standard deviation is 29), for each method respectively. Thus, for this specific simulation set-up, Algorithm \ref{alg:algNPD} beats ADMM in terms of the number of iterations. In fact, Algorithm \ref{alg:algNPD} terminates after 55\% less iterations than ADMM (with respect to the averaged value).

\section{Conclusion}\label{sec:C}

We have proposed two efficient distributed primal-dual interior-point method for loosely coupled problems using ADMM (Algorithm \ref{alg:algNPDexact} and Algorithm \ref{alg:algNPD}). Due to the nature of the interior-point method, the loosely coupled structure of the problem is preserved in the linear system of equations that provides the primal-dual directions. ADMM takes advantage of this structure and makes the direction calculations highly parallellizable. Consequently, the proposed method has superior computational properties with respect to other distributed algorithms. Of course, we can use Algorithm \ref{alg:algNPDexact} and Algorithm \ref{alg:algNPD} on problems with completely coupled structure as well, but we can not expect the same superior properties as for the loosely coupled structure.

The latter of the methods (Algorithm \ref{alg:algNPD}) adaptively chooses the required accuracy in the termination condition of the inner iterations (ADMM iterations) with respect to the accuracy obtained in the outer iterations (interior-point method iterations). This is to avoid unnecessary inner iterations when the accuracy of the current outer iteration is low, as elaborated in \cite{bon:05}. We have stated under which assumptions the method converges to the optimal solution. In addition, we have provided comparisons between Algorithm~ \ref{alg:algNPDexact}, Algorithm \ref{alg:algNPD} and ADMM in simulation.
\vspace{20pt}
\section*{Funding}
This work was partially supported by the European Research Council under the European Community’s Seventh Framework Programme (FP7/2007–2013) / ERC Grant Agreement No. 267381, the Swedish Research Council and the Linnaeus Center ACCESS at KTH.

\vspace{94pt}

\appendix

\section{Global Convergence of Distributed Inexact Primal-Dual Interior-Point Method}\label{app:global}
Here we state the proof for global convergence of the proposed distributed primal-dual inexact interior-point method. To this end, we use the definitions of $\Omega^i(\epsilon)$ for $i=1,\dots,N$ and assumptions B1--B4. The lemmas, theorems and proofs are adapted with very minor changes from \cite{eis:94} and \cite{bel:98}, more detailed references are given before each lemma and theorem. They are all included here for the sake of completeness.

The collection of lemmas and theorems in this appendix enables us to show that, through the run of the algorithm, the iterates are persistently updated and guaranteed to converge to the optimal solution. We first address the concept of \emph{break down} of the algorithm. That is, when we are not able to find a suitable step direction and step size \cite{eis:94}. We show that such a break down of the algorithm will not occur under our assumptions as defined in Section \ref{distConRes}. Second, we show that the algorithm is convergent towards an optimal solution. 

Particularly, Theorem \ref{notZero} and  Lemma \ref{breakdown} assure a persistent update of the iterates. The theorem illustrates that the upper bound of the step size is bounded away from zero. Lemma \ref{breakdown} states that, given a suitable search direction, it is always possible to find a step size which yields a satisfactory decrease in the merit function $\|H^i(z^i)\|$ for $i=1,\dots,N$.

The subsequent theorems are related to the convergence of the algorithm, that is, that the generated $z^{(l)}$ converges and $\|H(z^{(l)})\|$ converges to zero. Particularly, Theorem \eqref{conLimitPointTheorem} states that the sequence of iterates  $\{z^{(l)}\}$ generated by our method converges to a point $z_*$, and Theorem \eqref{conLimitPointTheoremPre} is a result necessary for the proof of Theorem \eqref{conLimitPointTheorem}.  Finally, in Theorem \eqref{globalCon}, we state under which assumptions the proposed algorithm is global convergent, that is, generates a sequence of iterates such that $\|H(z^{(l)})\|\rightarrow 0$. Theorem \eqref{divergent} is used in the proof of Theorem \eqref{globalCon}.
\vspace{20pt}
\subsection{Break Down}
There are four steps in the algorithm where a possible break down can occur. They are when calculating the step direction (Step 9 in Algorithm \ref{alg:algNPD}), choosing the intermediate step sizes $\alpha_1^i$ and $\alpha_2^i$ (steps 10-13 in Algorithm \ref{alg:algNPD}) and calculating the actual step size $\alpha$ (steps 10-13 in Algorithm \ref{alg:algNPD}). The hypothesis under which our proposed algorithm\emph{ does not} break down is addressed in the rest of this section.

A step direction can always be calculated provided $H'(z)$ is invertible. Hence, if assumption B3 is fulfilled, the algorithm will not break down at Step 9.  Next,  Theorem \ref{notZero} shows that the intermediate step sizes for any such step direction is always bounded away from zero. This theorem is based on Theorem 3.2 in \cite{bel:98}.
\vspace{10pt}
\begin{theorem}\label{notZero}
Assume $\{z^{(l)}\}$ is generated by Algorithm \ref{alg:algNPD} and assumptions B1-- B4 are fulfilled, then the sequence $\{\alpha^{(l)}\}$ with $\alpha^{(l)}=\underset{i}{\text{min}}(\alpha^{i,(l)})$ and $\alpha^{i,(l)}=\text{min}(\alpha^{i}_1,\alpha^{i}_2)$ is bounded away from zero.
\end{theorem}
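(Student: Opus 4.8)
The plan is to follow the proof of Theorem~3.2 in \cite{bel:98} (itself in the spirit of \cite{eis:94}), adapted to the distributed inexact setting; the only genuinely new points are that $\mu^{(l)}$, $\sigma^{(l)}$, $\hat\eta^{(l)}$ and the thresholds $\bar\tau_1^i,\bar\tau_2^i$ are now assembled from minima/maxima over the agents, and that the inexactness residual is controlled only through the ADMM stopping bound \eqref{stopIIP}. Since $N$ is finite and $\alpha^{(l)}=\min_i\alpha^{i,(l)}$ with $\alpha^{i,(l)}=\min(\bar\alpha_1^{i,(l)},\bar\alpha_2^{i,(l)})$, it is enough to produce one constant $\underline\alpha>0$, independent of $i$ and $l$, such that the scalar functions $f_1$ and $f_2$ of Section~\ref{sec:diststep} (for agent $i$ at iteration $l$) are nonnegative on $[0,\underline\alpha]$; then $\bar\alpha_1^{i,(l)},\bar\alpha_2^{i,(l)}\ge\underline\alpha$ for all $i,l$ and the claim follows at once.

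First I would record uniform a~priori bounds. By B2 the iterates lie in a bounded set, and by construction each $z^{i,(l)}\in\Omega^i(0)$; with B1 this bounds $\|H^i(z^{i,(l)})\|$, and the defining inequalities of $\Omega^i(0)$ (together with $\|S^{i}\Lambda^{i}\hat e\|\le(s^{i})^T\lambda^{i}$) show that $(s^{i,(l)})^T\lambda^{i,(l)}$ is comparable to $\|H^i(z^{i,(l)})\|$ up to constants depending only on $\bar\tau_2^i$. Hence the aggregate gap $\Theta^{(l)}:=\sum_i(s^{i,(l)})^T\lambda^{i,(l)}$ obeys $\Theta^{(l)}\le\bar\Theta<\infty$, while $\mu^{(l)}$ (a positive multiple of the smallest per-agent gap) and $\|\hat r^{i,(l)}\|$ (at most $\hat\eta^{(l)}(s^{i,(l)})^T\lambda^{i,(l)}/m$, by \eqref{epsADMMInexact1}--\eqref{epsADMMInexact2}) are both $O(\Theta^{(l)})$. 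By B3, $\|H'(z^{(l)})^{-1}\|$ is uniformly bounded along the iterates before termination, so solving \eqref{eq:QAppIneqKKTPDComInexact} gives $\|\Delta z^{(l)}\|\le C\|H(z^{(l)})\|=O(\Theta^{(l)})$; in particular $\|\Delta s^{i,(l)}\|$, $\|\Delta\lambda^{i,(l)}\|$, $\|\Delta z^{i,(l)}\|$ are all $O(\Theta^{(l)})$, so every second-order remainder below carries an extra factor $(\Theta^{(l)})^2$ --- this is exactly what will make the threshold $l$-independent.

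Next comes the scalar estimate for $\bar\alpha_1^{i,(l)}$. Writing $s_j^i(\alpha)\lambda_j^i(\alpha)$ as a quadratic in $\alpha$, substituting the linearized complementarity row \eqref{eq:QAppIneqKKTPD1}, and using $\|\hat r^{i,(l)}\|\le\hat\eta^{(l)}(s^{i,(l)})^T\lambda^{i,(l)}/m$, one collects terms into
\[
f_1(\alpha)\ \ge\ (1-\alpha)\,f_1(0)\ +\ \alpha\Bigl[\,\mu^{(l)}\bigl(1-\bar\tau_1^i\gamma^{i,(l)}\bigr)-\bigl(1+\tfrac{\bar\tau_1^i\gamma^{i,(l)}}{\sqrt{m_i}}\bigr)\|\hat r^{i,(l)}\|-\alpha\,C_1(\Theta^{(l)})^2\,\Bigr].
\]
Here $f_1(0)\ge0$ because $z^{i,(l)}\in\Omega^i(0)$ and $\gamma^{i,(l)}\ge\tfrac12$; the factor $1-\bar\tau_1^i\gamma^{i,(l)}$ is bounded below by a fixed positive constant since $\bar\tau_1^i\le1$ and $\gamma^{i,(l)}\le\gamma^{i,(0)}<1$; and the recipe for $\sigma^{(l)},\hat\eta^{(l)}$ in Section~\ref{sec:diststep} --- $\sigma^{(l)}$ bounded away from $0$, the inequality $\sigma^{i,(l)}>\bigl(\tfrac{\sqrt m+\bar\tau_1^i\gamma^{i,(l)}}{\sqrt m(1-\bar\tau_1^i\gamma^{i,(l)})}\bigr)\hat\eta^{i,(l)}$, and $\hat\eta^{(l)}=\min_i\hat\eta^{i,(l)}$, $\sigma^{(l)}=\max_i\sigma^{i,(l)}$ --- forces the first two terms of the bracket to be at least $c_0\Theta^{(l)}$ for a fixed $c_0>0$. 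As $\Theta^{(l)}\le\bar\Theta$, the bracket is $\ge\Theta^{(l)}(c_0-\alpha C_1\bar\Theta)\ge0$ once $\alpha\le c_0/(C_1\bar\Theta)$. For $\bar\alpha_2^{i,(l)}$ the argument is identical except for the nonlinear term $\|R^i(z^{i,(l)}(\alpha))\|$: B4 and the linearized feasibility/dual rows of \eqref{eq:QAppIneqKKTPDComInexact} give $\|R^i(z^i(\alpha))\|\le(1-\alpha)\|R^i(z^i)\|+\alpha\|\hat r^{i,(l)}\|+\tfrac{L^i}{2}\alpha^2\|\Delta z^i\|^2$, which combined with the quadratic lower bound for $(s^i(\alpha))^T\lambda^i(\alpha)$ yields
\[
f_2(\alpha)\ \ge\ (1-\alpha)\,f_2(0)\ +\ \alpha\Bigl[\,m_i\mu^{(l)}-\bigl(\sqrt{m_i}+\bar\tau_2^i\gamma^{i,(l)}\bigr)\|\hat r^{i,(l)}\|-\alpha\,C_2(\Theta^{(l)})^2\,\Bigr],
\]
with $f_2(0)\ge0$ since the algorithm maintained $(s^{i,(l)})^T\lambda^{i,(l)}\ge\bar\tau_2^i\gamma^{i,(l-1)}\|R^i(z^{i,(l)})\|$ and $\gamma^{i,(l)}\le\gamma^{i,(l-1)}$; invoking the second branch of the $\sigma^{(l)}$ condition, the bracket is again $\ge\Theta^{(l)}(c_0'-\alpha C_2\bar\Theta)\ge0$ below a fixed threshold. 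Taking $\underline\alpha$ equal to the smaller of the two thresholds proves the theorem.

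I expect the distributed bookkeeping in the previous paragraph to be the real obstacle: one must check carefully that $\mu^{(l)}(1-\bar\tau_1^i\gamma^{i,(l)})-(1+\cdots)\|\hat r^{i,(l)}\|$ and its $f_2$ analogue are genuinely bounded below by a positive multiple of $\Theta^{(l)}$ uniformly in $i$ and $l$, even when the per-agent gaps $(s^{i,(l)})^T\lambda^{i,(l)}$ become disparate, so that $\mu^{(l)}$ --- built from the smallest of them --- can still dominate residual contributions indexed by other agents; this is precisely what the safeguard $\epsilon_\sigma$ and the particular min/max assembly of $\mu^{(l)},\sigma^{(l)},\hat\eta^{(l)}$ in Section~\ref{sec:diststep} are designed to secure. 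Everything else --- the quadratic expansion of the complementarity products, the Lipschitz estimate of $R^i$ from B4, and the a~priori bound $\|\Delta z^{(l)}\|=O(\|H(z^{(l)})\|)$ that downgrades the second-order terms to $O((\Theta^{(l)})^2)$ --- is routine once B1--B4 are in hand.
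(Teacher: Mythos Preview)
Your overall strategy---expand $f_1,f_2$ as quadratics in $\alpha$, control the linear coefficient from below using the design of $\sigma^{(l)},\hat\eta^{(l)}$, and absorb the quadratic remainder using the uniform bound on $\|\Delta z^i\|$---is the same as the paper's, and the reduction to a single $i$-uniform threshold $\underline\alpha$ is fine. However, there is a concrete gap in your $f_1$ step.

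You invoke the inequality
\[
\sigma^{i,(l)}>\frac{\sqrt m+\bar\tau_1^i\gamma^{i,(l)}}{\sqrt m\,(1-\bar\tau_1^i\gamma^{i,(l)})}\,\hat\eta^{i,(l)}
\]
to make the bracket $\mu^{(l)}(1-\bar\tau_1^i\gamma^{i,(l)})-(1+\cdots)\|\hat r^{i,(l)}\|$ positive. That inequality is the $\bar\tau_1$-branch of the \emph{centralized} Algorithm~\ref{alg:alg1b}; it is \emph{not} imposed in the distributed Algorithm~\ref{alg:algNPD}, whose $\sigma^{i,(l)}$-rule in Section~\ref{sec:diststep} carries only the $\bar\tau_2$ safeguard. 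So as written your $f_1$ argument relies on a hypothesis the theorem does not give you.

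The fix---and the structural point you missed---is that in this algorithm the centering/complementarity row of \eqref{eq:QAppIneqKKTPDComInexact} is solved \emph{exactly}: $\Delta s^i$ and $\Delta\lambda^i$ are obtained in closed form from \eqref{extraDir}, so the residual has the shape $\hat r^{(l)}=(\hat r_1^{(l)},0)$ (cf.\ \eqref{residual2}). Consequently the complementarity expansion gives $s_j^i\Delta\lambda_j^i+\lambda_j^i\Delta s_j^i=-s_j^i\lambda_j^i+\mu^{(l)}$ with no error term, and your bracket in the $f_1$ estimate simplifies to $\mu^{(l)}(1-\bar\tau_1^i\gamma^{i,(l)})-\alpha C_1(\Theta^{(l)})^2$; the residual contribution you were trying to dominate is simply absent, so no $\bar\tau_1$-type condition on $\sigma$ is needed. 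The same observation removes the spurious $\sqrt{m_i}\|\hat r^{i,(l)}\|$ term in your $f_2$ bound: the only place $\hat r^{i,(l)}$ enters $f_2$ is through the Lipschitz estimate on $\|R^i(\alpha)\|$, and there it is controlled by the single $\bar\tau_2$-safeguard $\sigma^{i,(l)}>\bar\tau_2^i\gamma^{i,(l)}\hat\eta^{i,(l)}(s^{i,(l)})^T\lambda^{i,(l)}/\min_i((s^{i,(l)})^T\lambda^{i,(l)})+\epsilon_\sigma$ that \emph{is} in Section~\ref{sec:diststep}. With that correction, your outline goes through and matches the paper's proof (which, incidentally, argues directly in $\Omega^i(\epsilon)$ where $(s^i)^T\lambda^i$ is bounded away from zero, rather than tracking $\Theta^{(l)}$-homogeneity; either works).
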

\vspace{10pt}
\begin{proof}
The proof follows closely that of Theorem 3.2 in \cite{bel:98}. We first show that the sum of the complementary KKT condition is bounded away from zero in $\Omega^i(\epsilon)$. First, note that $(s^{i})^T\lambda^{i}\geq\|S^{i}\Lambda^{i}\hat{e}\|$ and $(s^{i})^T\lambda^{i}\geq\bar{\tau}_2^i\gamma^{i}\|R^i(z^{i})\|$. Using the quadratic mean we get
\begin{align*}
(s^{i})^T\lambda^{i}&\geq \sqrt{\frac{\|S^{i}\Lambda^{i}\hat{e}\|^2+(\bar{\tau}_2^i\gamma^{i}\|R^i(z^{i})\|)^2}{2}}\\
&\geq \textrm{min}\left(1,\frac{\bar{\tau}_2^i}{2}\right)\|H^i(z^{i,(l)})\|\frac{1}{\sqrt{2}}\\
&\geq\textrm{min}\left(1,\frac{\bar{\tau}_2^i}{2}\right)\frac{\epsilon}{N}\frac{1}{\sqrt{2}}.
\end{align*}
The right hand side of the last inequality is independent of the iteration $l$ and strictly larger than zero, thus $(s^{i})^T\lambda^{i}$ is bounded away from zero in $\Omega^i(\epsilon)$ for $i=1,\dots,N$. Due to assumptions B1, B3 and the fact that $\|\hat{r}^{(l)}\|$ is bounded, we have that $\|\Delta z^{i}\|$ is bounded and there exists $M_1^{i}>0$ and $M_2^i>0$ such that

$$|\Delta s_j^i\Delta \lambda_j^i-\frac{\tau_1^i\gamma^i}{m_i}(\Delta s^i)^T\Delta \lambda^i|\leq M_1^i, \textrm{ for all } i=1,\dots,N$$

\noindent and

$$|(\Delta s^i)^T\Delta \lambda^i-\tau_2^i\gamma^iL^i\|\Delta z^{i,(l)}\|^2|\leq M_2^i, \textrm{ for all } i=1,\dots,N,$$

\noindent respectively. Furthermore, we have $\hat{r}^{(l)}=(\hat{r}_1^{(l)},0)$, where the zero corresponds to the complementary KKT condition which is solved exactly in Algorithm \ref{alg:algNPD}.

To determine $\alpha^{i}_1$ at iteration $l$, we consider the expression (where we omit the superscript $l$)
\begin{align*}
s_j^{i}(\alpha)\lambda_j^{i}(\alpha)-\frac{\bar{\tau}_1^i\gamma^{i}}{m_i}(s^{i}(\alpha))^T\lambda^{i}(\alpha)&=\\
(s_j^{i}+\alpha \Delta s_j^{i})(\lambda_j^{i}+\alpha \Delta\lambda_j^{i})-\frac{\bar{\tau}_1^i\gamma^{i}}{m_i}(s^{i}+\alpha \Delta s^{i})^T(\lambda^{i}+\alpha \Delta\lambda^{i})&=\\
\left(\!s_j^{i}\lambda_j^{i}\!-\!\frac{\bar{\tau}_1^i\gamma^{i}}{m_i}(s^{i})^T\lambda^{i}\!\right)\!+\!\alpha\left(\!\Delta\lambda_j^{i} s_j^{i}+ \Delta s_j^{i}\lambda_j^{i}\! -\!\frac{\bar{\tau}_1^i\gamma^{i}}{m_i}(s^{i})^T\Delta\lambda^{i}\! -\!\frac{\bar{\tau}_1^i\gamma^{i}}{m_i}(\lambda^{i})^T\Delta s^{i}\!\right)&+\\
\alpha^2\left(\Delta s_j^{i}\Delta\lambda_j^{i}-\frac{\bar{\tau}_1^i\gamma^{i}}{m_i}(\Delta s^{i})^T\Delta\lambda^{i}\right)&=\\
\left(\!s_j^{i}\lambda_j^{i}\!-\!\frac{\bar{\tau}_1^i\gamma^{i}}{m_i}(s^{i})^T\lambda^{i}\!\right)+\alpha\left(\!-s_j^{i}\lambda_j^{i}+\mu-\frac{\bar{\tau}_1^i\gamma^{i}}{m_i}\hat{e}^T(-S^{i}\Lambda^{i}\hat{e}+\mu\hat{e})\right)&+\\
\alpha^2\left(\Delta s_j^{i}\Delta\lambda_j^{i}-\frac{\bar{\tau}_1^i\gamma^{i}}{m_i}(\Delta s^{i})^T\Delta\lambda^{i}\right)&=\\
(1-\alpha)\!\left(\!s_j^{i}\!\lambda_j^{i}\!-\!\frac{\bar{\tau}_1^i\gamma^{i}}{m_i}(s^{i})^T\lambda^{i}\!\right)\!+\alpha(1-\bar{\tau}_1^i\gamma^{i})\mu\!+\!\alpha^2\!\left(\!\Delta s_j^i\Delta \lambda_j^i-\frac{\tau_1^i\gamma^i}{m_i}(\Delta s^i)^T\Delta \lambda^i\!\right)\!&\geq\\
\alpha(1-\bar{\tau}_1^i\gamma^{i})\mu+\alpha^2\left(\Delta s_j^i\Delta \lambda_j^i-\frac{\tau_1^i\gamma^i}{m_i}(\Delta s^i)^T\Delta \lambda^i\right)&\geq\\
\alpha(1-\bar{\tau}_1^i\gamma^{i})\mu-\alpha^2\left |\Delta s_j^i\Delta \lambda_j^i-\frac{\tau_1^i\gamma^i}{m_i}(\Delta s^i)^T\Delta \lambda^i\right |&\geq\\
\alpha(1-\bar{\tau}_1^i\gamma^{i})\mu-\alpha^2M_1^i.
\end{align*}
Since $\alpha_1^i$ is defined as
\begin{align*}
\alpha_1^i=\underset{\alpha \in [0,1]}{\textrm{max}}\left\{\alpha|f_1(\alpha')\geq 0, \textrm{ for all }\alpha'\leq\alpha\right\},
\end{align*}
with
\begin{align*}f_1(\alpha)=\textrm{min}\left(S^{i,(l)}(\alpha)\Lambda^{i,(l)}(\alpha)\hat{e}\right)-\bar{\tau}_1^i\gamma^{i,(l)}\left(s^{i,(l)}(\alpha)\right)^T\lambda^{i,(l)}(\alpha)/m_i,\end{align*}
we see that
$$\alpha^{i}_1\geq (1-\bar{\tau}_1^i\gamma^{i})\mu\frac{1}{M^i_1}>0.$$
We have that $\mu$ is bounded away from zero (since both $\sigma$ and $(s^i)^T\lambda^i$ are bounded away from zero) and $\bar{\tau}_1^i\gamma^{i}$ is bounded away from one (since  $\bar{\tau}_1^i$ is at most one and $\gamma^{i}$ is bounded away from one). Hence, $\alpha^{i}_1$ is bounded away from zero in in $\Omega^i(\epsilon)$ for $i=1,\dots,N$.

To determine $\alpha^{i}_2$ at iteration $l$, we consider the expression (where we once again omit the superscript $l$)
$$(s^{i}(\alpha))^T\lambda^{i}(\alpha)-\bar{\tau}_2^i\gamma^{i}\|R^i(\alpha)\|.$$
In analogy to page 117 in \cite{bel:98}, we use the mean value theorem and the Lipschitz continuity of $(R^{i})'$. The mean value theorem gives
\begin{align*}
R^i(\alpha)&=R^i(z^{i})+\alpha (R^{i})'(z^{i})\Delta z^{i}+\alpha\left(\int_{0}^1 \left(  (R^{i})'(z^{i}+t\alpha\Delta z^{i})-(R^{i})'(z^{i}) \right)\Delta z^{i}\right)\\
&=R^i(z^{i})(1-\alpha)+\alpha\hat{r}^i_1+\alpha\left(\int_{0}^1 \left(  (R^{i})'(z^{i}+t\alpha\Delta z^{i})-(R^{i})'(z^{i}) \right)\Delta z^{i}\right),
\end{align*}
where $\hat{r}^i$ denotes the contributing elements of the $i$th term in $\|\hat{r}\|$, see \eqref{residualhat}.The Lipschitz continuity of $(R^{i})'$ gives
\begin{align}
\label{LipCon}
\begin{split}
\|R^i(\alpha)\|&\leq \|R^i(z^{i})\|(1-\alpha)+\alpha\|\hat{r}^i_1\|+\alpha^2L^i\|\Delta z^{i}\|^2.
\end{split}
\end{align}
From inequality \eqref{LipCon} and the stop criteria of the step direction calculation Algorithm~\ref{alg:algNPD}, we get
\begin{align*}
\|R^i(\alpha)\|\leq(1-\alpha)\|R^i(z^i)\|+\alpha\hat{\eta}\frac{(s^i)^T\lambda^i}{m}+\alpha^2L^i\|\Delta z^{i}\|^2.
\end{align*}
Thus,
\begin{align*}
(s^i(\alpha))^T\lambda^i(\alpha)-\bar{\tau}_2^i\gamma^i\|R^i(\alpha)\|&\geq\\
(\!s^{i}\!+\alpha \Delta s^{i})^T(\lambda^{i}\!+\alpha \Delta\lambda^{i})\!-\!\bar{\tau}_2^i\gamma^i\left(\!(1-\alpha)\|R^i(z^i)\|\!+\!\alpha\hat{\eta}\frac{(s^i)^T\lambda^i}{m}\!+\!\alpha^2L^i\|\Delta z^{i}\|^2\right)\!&=\\
(s^{i})^T\lambda^{i}+\alpha(s^{i})^T\Delta\lambda^{i}+\alpha(\Delta s^{i})^T\lambda^{i}+\alpha^2(\Delta s^{i})^T\Delta\lambda^{i}&-\\
\bar{\tau}_2^i\gamma^i\left((1-\alpha)\|R^i(z^i)\|+\alpha\hat{\eta}\frac{(s^i)^T\lambda^i}{m}+\alpha^2L^i\|\Delta z^{i}\|^2\right)&=\\
(1-\alpha)\left((s^i)^T\lambda^i-\bar{\tau}_2^i\gamma^i\|R^i(z^i)\|\right)+\alpha \left(m_i\mu-\frac{\bar{\tau}_2^i\gamma^i\hat{\eta}(s^i)^T\lambda^i}{m}\right)&+\\
\alpha^2\left((\Delta s^i)^T\Delta \lambda^i-\tau_2^i\gamma^iL^i\|\Delta z^{i,(l)}\|^2\right)&=\\
(1-\alpha)\left((s^i)^T\lambda^i-\bar{\tau}_2^i\gamma^i\|R^i(z^i)\|\right)+\alpha \left(m_i\mu-\frac{\bar{\tau}_2^i\gamma^i\hat{\eta}(s^i)^T\lambda^i}{m}\right)&+\\
\alpha^2\left((\Delta s^i)^T\Delta \lambda^i-\tau_2^i\gamma^iL^i\|\Delta z^{i,(l)}\|^2\right)&\geq\\
(1-\alpha)\left((s^i)^T\lambda^i-\bar{\tau}_2^i\gamma^i\|R^i(z^i)\|\right)+\alpha \left(\mu-\frac{\bar{\tau}_2^i\gamma^i\hat{\eta}(s^i)^T\lambda^i}{m}\right)&-\\
\alpha^2\left|(\Delta s^i)^T\Delta \lambda^i-\tau_2^i\gamma^iL^i\|\Delta z^{i,(l)}\|^2\right|&\geq\\
(1-\alpha)\!\left((s^i)^T\!\lambda^i\!\!-\!\bar{\tau}_2^i\gamma^i\|R^i(z^i)\|\right)\!+\!\alpha\! \left(\!\frac{\sigma}{m}\underset{i}{\textrm{min}}\left((s^i)^T\!\lambda^i\right)\!-\!\frac{\bar{\tau}_2^i\gamma^i\hat{\eta}(s^i)^T\!\lambda^i}{m}\right)\!-\!\alpha^2M_2^i&\geq\\
\alpha \left(\frac{\sigma}{m}\underset{i}{\textrm{min}}\left((s^i)^T\lambda^i\right)-\frac{\bar{\tau}_2^i\gamma^i\hat{\eta}(s^i)^T\lambda^i}{m}\right)-\alpha^2M_2^i.
\end{align*}
Since $\alpha_2^i$ is defined as
\begin{align*}
\alpha_2^i=\underset{\alpha \in [0,1]}{\textrm{max}}\left\{\alpha|f_2(\alpha')\geq 0, \textrm{ for all }\alpha'\leq\alpha\right\},
\end{align*}
with
$$f_2(\alpha)=\left(s^{i,(l)}(\alpha)\right)^T\lambda^{i,(l)}(\alpha)-\bar{\tau}_2^i\gamma^{i,(l)}\|R^i(\alpha)\|$$
and $\sigma^i>\bar{\tau}_2^i\gamma^{i}\hat{\eta}^i(s^i)^T\lambda^i/\underset{i}{\textrm{min}}\left((s^i)^T\lambda^i\right)+\epsilon_{\sigma}$,
we see that
$$\alpha^{i}_2\geq  \left(\sigma\underset{i}{\textrm{min}}\left((s^i)^T\lambda^i\right)-\bar{\tau}_2^i\gamma^i\hat{\eta}(s^i)^T\lambda^i\right)\frac{1}{mM_2^i}>0.$$
Hence, $\alpha^{i}_2$ is bounded away from zero in $\Omega^i(\epsilon)$ for $i=1,\dots,N$.
\end{proof}
Thus, we can always find a step size bounded away from zero. We continue by showing that, given a step direction $\tilde{p}^{i}$ calculated in accordance to Algorithm \ref{alg:algNPD}, there is a step $p^{i}$ such that 
\begin{align*}
\| H^i(z^{i}+p^{i}) \|\leq (1-\beta(1-\eta^{i}))\| H^i(z^{i}) \|.
\end{align*}
given $z^{i}$ and $\beta \in (0,1)$. That is, there exists an actual step size $\alpha$ such that the inequality above is fulfilled. To do this we state Lemma \eqref{breakdown} which is based on Lemma 3.1 in \cite{eis:94}.

\begin{lemma}\label{breakdown}
Given $z^{i}$ and $\beta \in (0,1)$, assume that $\tilde{p}^{i}$ is calculated in accordance to Algorithm \ref{alg:algNPD}. Then
\begin{align*}
\| H^i(z^{i})+(H^{i})'(z^{i})\tilde{p}^{i} \|<  \| H^i(z^{i}) \|,
\end{align*}
and there exists an $\eta_{\textrm{min}}^i\in [0,1)$ such that, for any $\eta^{i} \in [\eta_{\textrm{min}}^i,1)$, we can find a $p^{i}$ satisfying
\begin{align*}
\| H^i(z^{i}+p^{i}) \|\leq (1-\beta(1-\eta^{i}))\| H^i(z^{i}) \|.
\end{align*}
\end{lemma}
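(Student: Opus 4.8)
The plan is to adapt the proof of Lemma~3.1 in \cite{eis:94}, taking for the ``Newton residual'' the per-agent linearized KKT residual produced by Algorithm~\ref{alg:algNPD}. First I would record the inexact-Newton bound satisfied by $\tilde p^{i}$. Because $\tilde p^{i}$ is obtained by terminating Algorithm~\ref{alg:alg4PD} with the thresholds \eqref{epsADMMInexact1}--\eqref{epsADMMInexact2}, the $i$-th block of the residual vector \eqref{residual2}, augmented by the $\mu^{(l)}$-perturbation of its complementarity block, is exactly $H^i(z^{i})+(H^i)'(z^{i})\tilde p^{i}$; denote it $\tilde r^{i}$. Using the identity \eqref{residualhat} derived in the proof of Theorem~\ref{connectStop}, the explicit threshold values, the elementary bound $(s^{i,(l)})^T\lambda^{i,(l)}=\|S^{i,(l)}\Lambda^{i,(l)}\hat e\|_1\le\sqrt{m_i}\,\|S^{i,(l)}\Lambda^{i,(l)}\hat e\|_2\le\sqrt{m_i}\,\|H^i(z^{i})\|$, and $\mu^{(l)}\le\sigma^{(l)}(s^{i,(l)})^T\lambda^{i,(l)}/m$, one checks that
\begin{align*}
\bigl\|H^i(z^{i})+(H^i)'(z^{i})\tilde p^{i}\bigr\|=\|\tilde r^{i}\|\le\bigl(\sigma^{(l)}+\hat\eta^{(l)}\bigr)\|H^i(z^{i})\|=\bar\eta^{(l)}\|H^i(z^{i})\|,
\end{align*}
where $\bar\eta^{(l)}=\sigma^{(l)}+\hat\eta^{(l)}\in(0,1)$ by the parameter choices in Section~\ref{sec:diststep}. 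Since $H^i(z^{i})\ne 0$ (otherwise the first assertion is vacuous) and $\bar\eta^{(l)}<1$, this already yields $\|H^i(z^{i})+(H^i)'(z^{i})\tilde p^{i}\|<\|H^i(z^{i})\|$.

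Second, I would run the Eisenstat--Walker line-search estimate along the ray $p^{i}=\alpha\tilde p^{i}$, $\alpha\in(0,1]$. From the convex-combination identity
\begin{align*}
H^i(z^{i})+\alpha(H^i)'(z^{i})\tilde p^{i}=(1-\alpha)H^i(z^{i})+\alpha\bigl(H^i(z^{i})+(H^i)'(z^{i})\tilde p^{i}\bigr),
\end{align*}
the triangle inequality and the first step give $\|H^i(z^{i})+\alpha(H^i)'(z^{i})\tilde p^{i}\|\le\bigl(1-\alpha(1-\bar\eta^{(l)})\bigr)\|H^i(z^{i})\|$. By assumption~B1, $H^i$ is continuously differentiable on $\Omega^i(0)$, so $H^i(z^{i}+\alpha\tilde p^{i})-H^i(z^{i})-\alpha(H^i)'(z^{i})\tilde p^{i}=o(\alpha)$ as $\alpha\downarrow 0$, whence
\begin{align*}
\|H^i(z^{i}+\alpha\tilde p^{i})\|\le\bigl(1-\alpha(1-\bar\eta^{(l)})\bigr)\|H^i(z^{i})\|+o(\alpha).
\end{align*}
Since $\beta\in(0,1)$, the target bound $\bigl(1-\beta\alpha(1-\bar\eta^{(l)})\bigr)\|H^i(z^{i})\|$ exceeds the right-hand side by $(1-\beta)\alpha(1-\bar\eta^{(l)})\|H^i(z^{i})\|$, a quantity of order $\alpha$ that dominates the $o(\alpha)$ remainder for all sufficiently small $\alpha$; hence there is $\bar\alpha^{i}\in(0,1]$ with $\|H^i(z^{i}+\alpha\tilde p^{i})\|\le\bigl(1-\beta\alpha(1-\bar\eta^{(l)})\bigr)\|H^i(z^{i})\|$ for every $\alpha\in(0,\bar\alpha^{i}]$.

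Finally I would pass to the $\eta^{i}$-parametrisation of the statement. Put $\eta_{\textrm{min}}^{i}:=1-\bar\alpha^{i}(1-\bar\eta^{(l)})$, which lies in $[\bar\eta^{(l)},1)\subseteq[0,1)$. Given $\eta^{i}\in[\eta_{\textrm{min}}^{i},1)$, set $\alpha:=(1-\eta^{i})/(1-\bar\eta^{(l)})\in(0,\bar\alpha^{i}]$ and $p^{i}:=\alpha\tilde p^{i}$; then $\eta^{i}=1-\alpha(1-\bar\eta^{(l)})$ and the previous estimate reads precisely $\|H^i(z^{i}+p^{i})\|\le(1-\beta(1-\eta^{i}))\|H^i(z^{i})\|$, as required. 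The main obstacle is the first step: one must carefully verify that the agent-wise ADMM stopping tests \eqref{epsADMMInexact1}--\eqref{epsADMMInexact2}, combined via the residual decomposition \eqref{residualhat} and the block structure of \eqref{residual2} identified in the proof of Theorem~\ref{connectStop}, actually produce a genuine inexact-Newton bound for $H^i$ with a forcing constant bounded away from $1$. This is largely bookkeeping; once it is in place, the remaining steps are the standard globalization argument of \cite{eis:94}.
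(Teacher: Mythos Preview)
Your proposal is correct and follows essentially the same route as the paper: both first establish the inexact-Newton bound $\|H^i(z^i)+(H^i)'(z^i)\tilde p^i\|\le\bar\eta\|H^i(z^i)\|$ from the ADMM stopping thresholds and the residual decomposition of Theorem~\ref{connectStop}, and then run the Eisenstat--Walker Lemma~3.1 line-search estimate along the ray $\alpha\tilde p^i$. The only cosmetic differences are that the paper works with the actual ratio $\tilde\eta=\|H^i+(H^i)'\tilde p^i\|/\|H^i\|$ and an explicit $\epsilon$--$\delta$ choice for the linearization error, whereas you use the (larger) forcing constant $\bar\eta^{(l)}$ and the equivalent $o(\alpha)$ formulation; both yield a valid $\eta_{\min}^i$.
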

\begin{proof}
Constraint $\| H^i(z^{i})+(H^{i})'(z^{i})\tilde{p}^{i} \|< \| H^i(z^{i}) \|$ implies that $\|H^i(z^{i})\|\neq0$ and $\tilde{p}^{i}\neq0$. However, note that if $\|H^i(z^{i})\|=0$ we would have terminated the algorithm in the previous iteration. We start by showing the first inequality. The step direction calculations are terminated when the $i$th term of $\|\hat{r}\|$ (denoted $\|\hat{r}^i\|$) is less than $\hat{\eta}(s^i)^T\lambda^i/m$, see \eqref{residualhat}, \eqref{epsADMMInexact1} and \eqref{epsADMMInexact2}. Consequently, we get
\begin{align*}
\| H^i(z^{i})+(H^{i})'(z^{i})\tilde{p}^{i} \| &\leq \left\|\mu\begin{bmatrix}0 \\ 0\\ 0  \\\hat{e} \end{bmatrix}+\hat{r}^i\right\|\\
&\leq \sqrt{m_i}\mu+\left\|\hat{r}^i\right\|\\
&\leq\frac{\sqrt{m_i}\sigma}{m}\underset{i}{\textrm{min}}\left( (s^i)^T\lambda^i\right)+ \frac{\hat{\eta}}{m}\left( (s^i)^T\lambda^i\right)\\
&\leq \left(\sigma+ \hat{\eta}\right)\left( (s^i)^T\lambda^i\right)\\
&\leq\left(\sigma+ \hat{\eta}\right)\|H^i(z^i)\|\\
&=\bar{\eta}\|H^i(z^i)\|\\
&<\|H^i(z^i)\|,
\end{align*}
 since $\bar{\eta}<1$. The rest of the proof follows that of Lemma 3.1 in \cite{eis:94}. A slight modification has been made to incorporate $i=1,\dots,N$. Define
\begin{align*}
\tilde{\eta} &= \frac{\| H^i(z^{i})+(H^{i})'(z^{i})\tilde{p}^{i} \|}{\| H^i(z^{i})\|},\\
\epsilon ^i &=\frac{(1-\beta)(1-\tilde{\eta}^{i})\|H^i(z^{i})\|}{\|\tilde{p}^{i}\|},\\
\eta_{\textrm{min}}^i&=\textrm{max}\left\{ \tilde{\eta}, 1-\frac{(1-\tilde{\eta})\delta^i}{\|\tilde{p}^{i}\|}\right\},
\end{align*}
where $\delta ^i>0$ is chosen such that
\begin{align*}
\| H^i(z^{i}+p^{i})-H^i(z^{i})-(H^{i})'(z^{i})p^{i} \|\leq \epsilon^i \| p^{i} \|
\end{align*}
 whenever $\| p^{i} \|\leq \delta^i$. We set
\begin{align*}
 p^{i}= \frac{1-\eta^{i}}{1-\tilde{\eta}}  \tilde{p}^{i},
\end{align*}
for any $\eta^{i} \in [\eta_{\textrm{min}}^i,1)$.  Then
\begin{align*}
\| H^i(z^{i})+(H^{i})'(z^{i})p^{i} \|&\leq \frac{\eta^{i}-\tilde{\eta}}{1-\tilde{\eta}}  \| H^i(z^{i})\|+\frac{1-\eta^{i}}{1-\tilde{\eta}} \| H^i(z^{i})+(H^{i})'(z^{i})\tilde{p}^{i} \|\\
&=\frac{\eta^{i}-\tilde{\eta}}{1-\tilde{\eta}}  \| H^i(z^{i})\|+\frac{1-\eta^{i}}{1-\tilde{\eta}}\tilde{\eta} \| H^i(z^{i})\|\\
&=\eta^{i}\| H^i(z^{i})\|,
\end{align*}
and, due to
\begin{align*}
\| p^{i}\|= \frac{1-\eta^{i}}{1-\tilde{\eta}} \| \tilde{p}^{i}\|\leq \frac{1-\eta_{\textrm{min}}^i}{1-\tilde{\eta}} \| \tilde{p}^{i}\|\leq \delta^i,
\end{align*}
we get
\begin{align*}
\| H^i(z^{i}+p^{i}) \|&=\| H^i(z^{i}+p^{i})-H^i(z^{i})-(H^{i})'(z^{i})p^{i} +H^i(z^{i})+(H^{i})'(z^{i})p^{i} \|\\
& \leq\| H^i(z^{i}+p^{i})-H^i(z^{i})-(H^{i})'(z^{i})p^{i}\|+\|H^i(z^{i})+(H^{i})'(z^{i})p^{i} \|\\
&\leq \| H^i(z^{i}+p^{i})-H^i(z^{i})-(H^{i})'(z^{i})p^{i} \|\\
&\leq\epsilon^i\frac{1-\eta^{i}}{1-\tilde{\eta}}\| \tilde{p}^{i}\|+\eta^{i}\| H^i(z^{i})\|\\
&=(1-\beta )(1-\eta^{i})\| H^i(z^{i})\|+\eta^{i}\| H^i(z^{i})\|\\
&=(1-\beta(1-\eta^{i}))\| H^i(z^{i})\|.
\end{align*}
\end{proof}
Recall that the line search in Algorithm \ref{alg:algNPD} is initialized with $\tilde{p}^{i,(l)}=\alpha^{i,(l)}\Delta z^{i,(l)}$ and $\eta^{i,(l)}=1-\alpha^{i,(l)}(1-\bar{\eta}^{(l)})$, and evolves as $\tilde{p}^{i,(l)}=\theta\tilde{p}^{i,(l)}$ and $\eta^{i,(l)}=1-\theta(1-\eta^{i,(l)})$. Assume that $H^i(z^{i,(l)})\neq 0$ and $\|\Delta z^{i,(l)}\|\neq0$. Consequently, from the proof of Lemma~\ref{breakdown}, we conclude that the while loop terminates with
\begin{align}\label{terminate}1-\eta^{i,(l)}\geq\textrm{min}\left\{ \alpha^{i,(l)}(1-\bar{\eta}^{(l)}), \frac{\theta(1-\bar{\eta}^{(l)})\delta^{i}}{\|\Delta z^{i,(l)}\|}\right\},\end{align}
which is equivalent to the result on page 114 in \cite{bel:98} for $N=1$ and Lemma 5.1 in \cite{eis:94} for $N=1$ and an additional $\alpha$-update. To see this, we follow the reasoning made in the proof of Lemma 5.1 in \cite{eis:94}. If $\eta^{i,(l)} \in [\eta_{\textrm{min}}^{i,(l)},1)$ such that $1-\eta^{i,(l)}<\delta^i(1-\bar{\eta})/\|\Delta z^{i,(l)}\|$ we get, from the proof above, that
\begin{align*}
\| H^i(z^{i}+p^{i}) \|\leq (1-\beta(1-\eta^{i}))\| H^i(z^{i}) \|.
\end{align*}
Also, note that $1-\eta^{i,(l)}$ decreases with a factor $\theta\in(0,1)$ for each run of the while loop. If no iteration of the while loop is necessary we have $1-\eta^{i,(l)}=\alpha^{i,(l)}(1-\bar{\eta}^{(l)})$. Suppose instead that $1-\eta^{i,(l)}=\theta\delta^i(1-\bar{\eta})/\|\Delta z^{i,(l)}\|$ which is less than $\delta^i(1-\bar{\eta})/\|\Delta z^{i,(l)}\|$, then the loop terminates. Thus, \eqref{terminate} holds.

The value of $1-\eta^{i,(l)}$ is bounded away from zero (since $\alpha^{i,(l)}$ is bounded away from zero, $\bar{\eta}^{(l)}$ is bounded away from one, $\theta>0$ and independent of $l$, $\delta^i>0$ and independent of $l$, and $\|\Delta z^{i,(l)}\|$ is bounded). The fact that $1-\eta^{i,(l)}$ is bounded away from zero will be used in the proof of global convergence. So at this point we observe that the combination of Lemma \ref{breakdown} and Theorem \ref{notZero}, and the complementary discussion assure that the iterates are persistently updated  through the run of the algorithm. As a corollary, we can then state that the algorithm, provided assumptions B1-B4 are fulfilled, can only break down at some iteration point $z^{i,(l)}$ if and only if $\|H^i(z^{i,(l)})\|=0$. Notice that for convex problems the break down can only happen when we have arrived at an optimal solution, hence there will always exist a suitable search direction for updating the iterates. We will now focus on convergence properties of the algorithm.

\subsection{Convergence Properties}
We first discuss some results needed for the proof of global convergence. The following theorem is used in the proof of Theorem \ref{conLimitPointTheorem}, and is therefor included here. It is based on Theorem 3.5 in \cite{eis:94}.

\begin{theorem}\label{conLimitPointTheoremPre}
Assume that Algorithm \ref{alg:algNPD} does not break down. If $z_*$ is a limit point of $\{z^{(l)}\}$ such that there exists a $\Gamma$ independent of $l$ for which
\begin{align}\label{conLimitPoint}
\|p^{(l)} \|\leq\Gamma(1-\eta^{(l)}_*)\|H(z^{(l)})\|
\end{align}
when $z^{(l)}$ is sufficiently close to $z_*$ and $l$ is sufficiently large, then $z^{(l)}\rightarrow z_*$. Here, $(1-\eta^{(l)}_*)$ corresponds to the actual step size used by all subproblems $i=1,\dots, N$.
\end{theorem}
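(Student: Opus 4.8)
The plan is to adapt the proof of Theorem 3.5 in \cite{eis:94} to the distributed, $N$-subproblem setting, tracking how the bound \eqref{conLimitPoint} propagates along the iteration sequence. The essential idea is that condition \eqref{conLimitPoint} says the step $p^{(l)}$ is not too large relative to the merit value $\|H(z^{(l)})\|$ — specifically, it is controlled by the \emph{guaranteed decrease factor} $(1-\eta^{(l)}_*)$ supplied by Lemma \ref{breakdown} and Theorem \ref{notZero}. So the strategy is: show that once $z^{(l)}$ enters a small enough neighborhood of $z_*$, the iterates cannot escape, and the tail $\sum_l \|p^{(l)}\|$ is summable, which forces $z^{(l)}\to z_*$.

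First I would fix a limit point $z_*$ and a subsequence $z^{(l_j)}\to z_*$, and work in a ball $B(z_*,r)$ chosen small enough that \eqref{conLimitPoint} holds for all sufficiently large $l$ with $z^{(l)}\in B(z_*,r)$. The line-search acceptance rule guarantees $\|H(z^{(l+1)})\| \le (1-\beta(1-\eta^{(l)}))\|H(z^{(l)})\|$, so the sequence $\{\|H(z^{(l)})\|\}$ is nonincreasing; let its limit be $h_* \ge 0$. Telescoping the acceptance inequality gives $\beta\sum_l (1-\eta^{(l)})\|H(z^{(l)})\| \le \|H(z^{(0)})\| < \infty$. Combining this with \eqref{conLimitPoint} yields $\sum_l \|p^{(l)}\| \le \beta^{-1}\Gamma \sum_l (1-\eta^{(l)})\|H(z^{(l)})\| \cdot (\text{something finite})$ — more precisely, over the stretch of indices for which $z^{(l)}$ stays near $z_*$, the partial sums of $\|p^{(l)}\|$ are dominated by a convergent series. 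Hence if the iterates remain in $B(z_*,r)$ from some index onward, then $\{z^{(l)}\}$ is Cauchy and converges; since $z_*$ is already a limit point, the full sequence converges to $z_*$.

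The remaining point is to show the iterates cannot leave the neighborhood once they are deep inside it: pick $j$ large with $\|z^{(l_j)}-z_*\| < r/2$; using the tail bound $\sum_{l \ge l_j}\|p^{(l)}\| < r/2$ (achievable by taking $l_j$ large, because the series converges), an induction on $l \ge l_j$ shows $\|z^{(l)}-z_*\| < r$ for all $l\ge l_j$, so the hypothesis \eqref{conLimitPoint} keeps applying and the bootstrap closes. The distributed aspect enters only through the observation that the global step $\alpha^{(l+1)}=\min_i \alpha^{i,(l+1)}$ and the common $\eta^{(l)}_*$ are used by every subproblem, so $\|p^{(l)}\|^2 = \sum_i \|p^{i,(l)}\|^2$ and $\|H(z^{(l)})\|^2 = \sum_i \|H^i(z^{i,(l)})\|^2$ assemble the local quantities into the global ones without disturbing the scalar recursion above; Theorem \ref{notZero} (plus the post-line-search discussion that $1-\eta^{i,(l)}$ is bounded away from zero) is what makes the decrease factor usable.

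The main obstacle I anticipate is purely bookkeeping rather than conceptual: making the ``$z^{(l)}$ sufficiently close to $z_*$ and $l$ sufficiently large'' qualifier in \eqref{conLimitPoint} interact cleanly with the induction that keeps the iterates inside $B(z_*,r)$ — one must choose the radius $r$, the constant $\Gamma$, and the starting index $l_j$ in the right order so that the argument is not circular. I would structure it as: (i) get $r$ and $\Gamma$ from the hypothesis; (ii) note $\{\|H(z^{(l)})\|\}$ is monotone decreasing and bounded, hence convergent, so $\sum_l (1-\eta^{(l)})\|H(z^{(l)})\|$ converges by telescoping; (iii) deduce $\sum_l \|p^{(l)}\|$ converges on the indices where \eqref{conLimitPoint} is in force; (iv) choose $l_j$ so that both $\|z^{(l_j)}-z_*\|$ and the tail $\sum_{l\ge l_j}\|p^{(l)}\|$ are below $r/2$; (v) induct to confine the iterates; (vi) conclude Cauchy convergence to $z_*$. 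This mirrors \cite{eis:94} line-for-line with the $N$-fold direct-sum reinterpretation of the norms.
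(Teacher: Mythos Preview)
Your proposal is correct and follows exactly the approach the paper takes: the paper's own ``proof'' of this theorem consists of the single sentence ``See the proof of Theorem 3.5 in \cite{eis:94},'' and what you have written is precisely a faithful outline of that Eisenstat--Walker argument, with the (essentially cosmetic) observation that the global quantities $\|H(z^{(l)})\|$ and $\|p^{(l)}\|$ assemble as $\ell_2$-sums of the local ones. Your handling of the potential circularity is also the standard one: the telescoping bound $\sum_l (1-\eta^{(l)}_*)\|H(z^{(l)})\|<\infty$ holds \emph{globally}, so the tail can be made smaller than $r/2$ before invoking \eqref{conLimitPoint}, and the induction then closes cleanly.
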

\begin{proof}
See the proof of Theorem 3.5 in \cite{eis:94}.
\end{proof}

We are now ready to state the following theorem, which is based on Theorem 3.1 in \cite{bel:98}.
\begin{theorem}\label{conLimitPointTheorem}
If $z_*$ is a limit point of $\{z^{(l)}\}$ such that $\|H'(z_*)\|$ is nonsingular, then the sequence $\{z^{(l)}\}$ generated by Algorithm \ref{alg:algNPD} converges to $z_*$ .
\end{theorem}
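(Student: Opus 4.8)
The plan is to reduce the statement to Theorem~\ref{conLimitPointTheoremPre} (the distributed version of Theorem~3.5 in \cite{eis:94}), which promotes a limit point $z_*$ of $\{z^{(l)}\}$ to the limit as soon as the actual step $p^{(l)}=z^{(l+1)}-z^{(l)}=\alpha^{(l+1)}\Delta z^{(l+1)}$ obeys \eqref{conLimitPoint}, i.e.\ $\|p^{(l)}\|\le\Gamma(1-\eta^{(l)}_*)\|H(z^{(l)})\|$ for some $l$-independent $\Gamma$ whenever $z^{(l)}$ is near $z_*$. So the entire task is to exhibit such a $\Gamma$. I will use that, for Algorithm~\ref{alg:algNPD} and as recorded in the discussion around \eqref{terminate}, the effective forcing term satisfies $1-\eta^{(l)}_*=\alpha^{(l+1)}(1-\bar\eta^{(l)})$, where $\bar\eta^{(l)}=\sigma^{(l)}+\hat\eta^{(l)}$ is bounded away from $1$, say $\bar\eta^{(l)}\le\bar\eta_{\max}<1$.

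First I would invoke the hypothesis that $H'(z_*)$ is nonsingular. Since $z_*$ is a limit point of the iterates it lies in $\prod_{i}\Omega^i(0)$, where by assumption B1 each $H^i$---and hence $H$---is continuously differentiable; by continuity of $z\mapsto H'(z)$ and of matrix inversion at a nonsingular point, there are a neighbourhood $\mathcal N$ of $z_*$ and a constant $M>0$ with $\|H'(z)^{-1}\|\le M$ on $\mathcal N$.

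Next I would bound the direction. Since $\Delta z^{(l)}$ solves the inexact Newton system \eqref{eq:QAppIneqKKTPDComInexact}, $H'(z^{(l)})\Delta z^{(l)}=-H(z^{(l)})+\tilde r^{(l)}$, and the ADMM stopping rule---via Theorem~\ref{connectStop} and the thresholds \eqref{epsADMMInexact1}--\eqref{epsADMMInexact2}---forces $\|\tilde r^{(l)}\|\le\bar\eta^{(l)}\|H(z^{(l)})\|\le\bar\eta_{\max}\|H(z^{(l)})\|$, we get for $z^{(l)}\in\mathcal N$
\begin{align*}
\|\Delta z^{(l)}\|\le\|H'(z^{(l)})^{-1}\|\bigl(\|H(z^{(l)})\|+\|\tilde r^{(l)}\|\bigr)\le M(1+\bar\eta_{\max})\|H(z^{(l)})\|.
\end{align*}
Multiplying through by $\alpha^{(l+1)}$ and using $1-\eta^{(l)}_*=\alpha^{(l+1)}(1-\bar\eta^{(l)})\ge\alpha^{(l+1)}(1-\bar\eta_{\max})$, the common factor $\alpha^{(l+1)}$ cancels and \eqref{conLimitPoint} follows with $\Gamma=M(1+\bar\eta_{\max})/(1-\bar\eta_{\max})$:
\begin{align*}
\|p^{(l)}\|=\alpha^{(l+1)}\|\Delta z^{(l)}\|\le\frac{M(1+\bar\eta_{\max})}{1-\bar\eta_{\max}}\,(1-\eta^{(l)}_*)\,\|H(z^{(l)})\|.
\end{align*}
Theorem~\ref{conLimitPointTheoremPre} then yields $z^{(l)}\to z_*$, which is the claim.

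The hard part is not the analysis but the bookkeeping behind the identity $1-\eta^{(l)}_*=\alpha^{(l+1)}(1-\bar\eta^{(l)})$ in the distributed setting: one must check that the per-agent line searches (which scale $\tilde p^{i}$ and $1-\eta^{i}$ by the same $\theta$ at each backtracking step, cf.\ the derivation of \eqref{terminate}), the min-over-agents step $\alpha^{(l+1)}=\min_i\alpha^{i,(l+1)}$, and the global choices $\sigma^{(l)}=\max_i\sigma^{i,(l)}$, $\hat\eta^{(l)}=\min_i\hat\eta^{i,(l)}$ together make the ratio $\|p^{(l)}\|/(1-\eta^{(l)}_*)$ equal to $\|\Delta z^{(l)}\|/(1-\bar\eta^{(l)})$, so that the backtracking factor $\theta^{j}$ never enters the final estimate. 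Once that normalization is secured, what remains is exactly the uniform-boundedness argument of Theorem~3.1 in \cite{bel:98}.
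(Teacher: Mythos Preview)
Your proposal is correct and follows essentially the same approach as the paper: bound $\|H'(z)^{-1}\|$ uniformly on a neighbourhood of $z_*$, use the inexact Newton relation and $\|\tilde r^{(l)}\|\le\bar\eta^{(l)}\|H(z^{(l)})\|$ to control $\|\Delta z^{(l)}\|$, and then exploit the identity $1-\eta^{(l)}_*=\alpha^{(l+1)}(1-\bar\eta^{(l)})$ (which the paper writes as $p^{i,(l)}=\frac{1-\eta^{(l)}_*}{1-\bar\eta^{(l)}}\Delta z^i$) to arrive at \eqref{conLimitPoint} with the same constant $\Gamma=M(1+\bar\eta_{\max})/(1-\bar\eta_{\max})$ before invoking Theorem~\ref{conLimitPointTheoremPre}. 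Your closing remark about the bookkeeping behind that identity is exactly the distributed-setting subtlety the paper glosses over.
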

\begin{proof}
The proof follows closely that of Theorem 3.1 in \cite{bel:98}. We define $K=\|(H'(z_*))^{-1}\|$ and choose $\delta>0$ such that $(H'(z))^{-1}$ exists and $\|(H'(z))^{-1}\|\leq2K$ whenever $z\in N_{\delta}(z_*)$. The actual step used in subproblem $i$ is
\begin{align*}
 p^{i,(l)}= \underset{i}{min}\left\{1-\eta^{i,(l)}\right\}\frac{1}{1-\bar{\eta}^{(l)}}  \Delta z^{i}=\frac{1-\eta^{(l)}_*}{1-\bar{\eta}^{(l)}} \Delta z^{i},
\end{align*}
that is, we choose the minimum step size over all subproblems. Assume $z^{(l)}\in N_{\delta}(z_*)$, then
\begin{align*}
\| p^{(l)}\|&=\frac{1-\eta^{(l)}_*}{1-\bar{\eta}^{(l)}} \|\Delta z\|\\
&\leq \frac{1-\eta^{(l)}_*}{1-\bar{\eta}^{(l)}}\|(H'(z^{(l)}))^{-1}\|\|-H(z^{(l)})+\tilde{r}^{(l)}\|\\
&\leq \frac{1-\eta^{(l)}_*}{1-\bar{\eta}^{(l)}}\|(H'(z^{(l)}))^{-1}\|(\|H(z^{(l)})\|+\bar{\eta}^{(l)}\|H(z^{(l)})\|)\\
&\leq \frac{1-\eta^{(l)}_*}{1-\bar{\eta}^{(l)}}2K(1+\bar{\eta}^{(l)})\|H(z^{(l)})\|\\
&\leq \Gamma(1-\eta^{(l)}_*)\|H(z^{(l)})\|\\
\end{align*}
where $$\Gamma=2K\frac{1+\bar{\eta}_{\textrm{max}}}{1-\bar{\eta}_{\textrm{max}}}$$
and $\bar{\eta}_{\textrm{max}}$ is the maximum value that $\bar{\eta}^{(l)}$ can be set to. Thus, there exists a $\Gamma$ independent of $l$ for which
inequality \eqref{conLimitPoint} holds when $z^{(l)}$ is sufficiently close to $z_*$ and $l$ is sufficiently large. Consequently, by Theorem \ref{conLimitPointTheoremPre}, $z^{(l)}\rightarrow z_*$.
\end{proof}

The following theorem is needed in the proof of global convergence. It is based on Theorem 3.4 in \cite{eis:94}.

\begin{theorem}\label{divergent}
Assume that Algorithm \ref{alg:algNPD} does not break down. If $\sum_{l\geq 0}(1-\eta^{(l)}_*)$ is divergent then $H(z^{(l)}) \rightarrow 0$.
\end{theorem}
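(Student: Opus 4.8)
\textbf{Proof plan for Theorem \ref{divergent}.} The plan is to argue by contradiction using the monotone decrease of the merit function $\|H(z^{(l)})\|$ that is enforced by the line search. First I would observe that the while-loop acceptance criterion in Algorithm \ref{alg:algNPD} guarantees, at every outer iteration $l$, the sufficient-decrease inequality $\|H(z^{(l+1)})\| \leq (1 - \beta(1-\eta^{(l)}_*))\|H(z^{(l)})\|$, where $\eta^{(l)}_*$ corresponds to the common (minimum) step used by all subproblems. Since $\beta \in (0,1)$ and $1-\eta^{(l)}_* \in (0,1]$, the factor $1-\beta(1-\eta^{(l)}_*)$ lies in $(0,1)$, so the sequence $\{\|H(z^{(l)})\|\}$ is nonnegative and nonincreasing, hence convergent to some limit $H_* \geq 0$. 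The goal is to show $H_* = 0$.

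Next I would telescope the decrease. Unrolling the recursion gives
\begin{align*}
\|H(z^{(l)})\| \leq \|H(z^{(0)})\| \prod_{j=0}^{l-1}\bigl(1 - \beta(1-\eta^{(j)}_*)\bigr).
\end{align*}
Taking logarithms and using $\ln(1-t) \leq -t$ for $t \in [0,1)$, we get
\begin{align*}
\ln\|H(z^{(l)})\| \leq \ln\|H(z^{(0)})\| - \beta \sum_{j=0}^{l-1}(1-\eta^{(j)}_*).
\end{align*}
By hypothesis $\sum_{l\geq 0}(1-\eta^{(l)}_*)$ diverges, so the right-hand side tends to $-\infty$ as $l \to \infty$, forcing $\|H(z^{(l)})\| \to 0$, i.e. $H_* = 0$. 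This is exactly the claim.

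\textbf{Main obstacle.} The subtle point is not the telescoping estimate, which is routine, but justifying that the sufficient-decrease inequality actually holds at \emph{every} outer iteration $l$ — i.e. that the line search terminates (the algorithm does not break down), and that the common step $\eta^{(l)}_*$ formed by taking the minimum over the agents still yields a valid decrease of the \emph{global} merit function $\|H(z^{(l)})\|^2 = \sum_i \|H^i(z^{i,(l)})\|^2$. This is precisely what Lemma \ref{breakdown} together with Theorem \ref{notZero} and the accompanying discussion (culminating in \eqref{terminate}) establish: each agent's local while-loop terminates, $1-\eta^{i,(l)}$ is bounded away from zero, and the local decrease $\|H^i(z^{i,(l)}+p^{i,(l)})\| \leq (1-\beta(1-\eta^{i,(l)}))\|H^i(z^{i,(l)})\|$ holds; since $\alpha^{(l)} = \min_i \alpha^{i,(l)}$ makes $1-\eta^{(l)}_* \leq 1-\eta^{i,(l)}$ for all $i$ while still guaranteeing the line-search test passes for the aggregate, summing the squared local inequalities yields the global sufficient decrease. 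Given the ``does not break down'' hypothesis in the statement, this is available, so the argument closes; the only care needed is to phrase the decrease in terms of the global merit function consistently with how the local step sizes are combined.

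\begin{proof}
Assume Algorithm \ref{alg:algNPD} does not break down, so that at every outer iteration $l$ the line search produces a common step with parameter $\eta^{(l)}_*$ satisfying
\begin{align*}
\|H(z^{(l+1)})\| \leq \bigl(1 - \beta(1-\eta^{(l)}_*)\bigr)\|H(z^{(l)})\|,
\end{align*}
where $\beta \in (0,1)$ and $1-\eta^{(l)}_* \in (0,1]$, as guaranteed by Lemma \ref{breakdown}, Theorem \ref{notZero} and the discussion around \eqref{terminate}. Iterating this bound gives, for every $l \geq 1$,
\begin{align*}
\|H(z^{(l)})\| \leq \|H(z^{(0)})\| \prod_{j=0}^{l-1}\bigl(1 - \beta(1-\eta^{(j)}_*)\bigr).
\end{align*}
If $\|H(z^{(0)})\| = 0$ the conclusion is immediate, so assume $\|H(z^{(l)})\| > 0$ for all $l$ (otherwise the algorithm would have terminated and the statement holds trivially). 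Taking logarithms and using the inequality $\ln(1-t) \leq -t$ valid for $t \in [0,1)$ with $t = \beta(1-\eta^{(j)}_*)$,
\begin{align*}
\ln\|H(z^{(l)})\| \leq \ln\|H(z^{(0)})\| - \beta \sum_{j=0}^{l-1}(1-\eta^{(j)}_*).
\end{align*}
Since $\sum_{l\geq 0}(1-\eta^{(l)}_*)$ is divergent and $\beta > 0$, the right-hand side tends to $-\infty$ as $l \to \infty$. Hence $\ln\|H(z^{(l)})\| \to -\infty$, which is equivalent to $\|H(z^{(l)})\| \to 0$, i.e. $H(z^{(l)}) \to 0$.
\end{proof}
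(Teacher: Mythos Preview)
Your proof is correct and follows essentially the same route as the paper: telescope the sufficient-decrease inequality from the line search, bound the product $\prod_j(1-\beta(1-\eta^{(j)}_*))$ by $\exp\bigl(-\beta\sum_j(1-\eta^{(j)}_*)\bigr)$ (equivalently, your use of $\ln(1-t)\leq -t$), and conclude from the divergence hypothesis. The only cosmetic difference is that the paper first establishes the bound for each local $\|H^i(z^{i,(l)})\|$ and then aggregates to $\|H(z^{(l)})\|$, whereas you work directly with the global merit function; your ``main obstacle'' paragraph correctly identifies and handles this aggregation step.
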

\begin{proof}
The proof follows closely that of Theorem 3.4 in \cite{eis:94}. From the line search in Algorithm \ref{alg:algNPD}, we get
\begin{align*}
\|H^i(z^{i,(l)})\|&\leq(1-\beta(1-\eta^{(l-1)}_*))\|H^i(z^{i,(l-1)})\|\\
&\leq \Pi_{0\leq j <l}(1-\beta(1-\eta^{(j)}_*))\|H^i(z^{i,(0)})\|\\
&\leq \textrm{exp}\left(-\beta\sum_{0\leq j <l}(1-\eta^{(j)}_*)\right)\|H^i(z^{i,(0)})\|.
\end{align*}
Thus
\begin{align*}
\|H(z^{(l)})\|\leq \textrm{exp}\left(-\beta\sum_{0\leq j <l}(1-\eta^{(j)}_*)\right)\|H(z^{(0)})\|.
\end{align*}
The divergence of $\sum_{l\geq 0}(1-\eta^{{(l)}_*})$ implies $H(z^{(l)}) \rightarrow 0$ since $\beta>0$ and $1-\eta^{(l)}_*\geq 0$.
\end{proof}

We are now ready to state the theorem of global convergence of the proposed method. It is based on Theorem 3.3 in \cite{bel:98}.

\begin{theorem}\label{globalCon}
Assume $\{z^{(l)}\}$ is generated by Algorithm \ref{alg:algNPD} and assumptions B1-B4 are fulfilled, then $\{\|H(z^{(l)})\|\}$ converges to zero.
\end{theorem}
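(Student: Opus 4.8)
The plan is to reduce the global convergence of the distributed Algorithm \ref{alg:algNPD} to the already-established machinery for inexact interior-point methods, namely the classical argument of \cite{bel:98} and \cite{eis:94}, which is essentially reproduced in the preceding lemmas and theorems of this appendix. The starting point is the observation (already verified via Lemma \ref{breakdown}, Theorem \ref{notZero}, and the discussion around \eqref{terminate}) that under assumptions B1--B4 the algorithm does not break down: a search direction exists at every iterate with $\|H^i(z^{i,(l)})\|\neq 0$ because $H'$ is nonsingular on $\prod_i \Omega^i(\epsilon)$; the intermediate step sizes $\alpha_1^{i}$, $\alpha_2^{i}$ are bounded away from zero; and the line-search while loop terminates with $1-\eta^{i,(l)}$ bounded away from zero as long as the iterates stay in $\prod_i\Omega^i(\epsilon)$. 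Hence the sequence $\{\|H(z^{(l)})\|\}$ is monotonically decreasing (by the line-search acceptance criterion summed over $i$), so it converges to some limit $H_*\ge 0$; the whole point is to show $H_*=0$.

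The argument is by contradiction: suppose $H_* = \lim_l \|H(z^{(l)})\| = \epsilon > 0$. Then for all $l$ the iterates $z^{i,(l)}$ remain in $\Omega^i(\epsilon)$ for each $i$ (the lower bound $\epsilon/N \le \|H^i(z^{i,(l)})\|$ holds because $\|H(z^{(l)})\|^2 = \sum_i \|H^i(z^{i,(l)})\|^2 \ge \epsilon^2$ would be violated only if some term is large, but we can take $\epsilon/N$ as the uniform lower bound used in the definition of $\Omega^i$, and the centrality/residual inequalities defining $\Omega^i$ are maintained by the choices of $\bar\alpha_1^i$, $\bar\alpha_2^i$ in steps 10--13). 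On this region, by Theorem \ref{notZero} we have $\alpha^{i,(l)} = \min(\alpha_1^i,\alpha_2^i) \ge \underline\alpha > 0$ uniformly, and by the analysis following \eqref{terminate} the accepted line-search parameter satisfies $1-\eta^{(l)}_* \ge c > 0$ uniformly in $l$, where $\eta^{(l)}_* = \max_i \eta^{i,(l)}$ corresponds to the common (smallest) step size $\alpha^{(l+1)} = \min_i \alpha^{i,(l+1)}$. Consequently $\sum_{l\ge 0}(1-\eta^{(l)}_*) = \infty$, and Theorem \ref{divergent} then forces $H(z^{(l)})\to 0$, contradicting $H_*=\epsilon>0$. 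Therefore $H_*=0$.

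The one place that needs genuine care — and which I expect to be the main obstacle — is making rigorous the claim that $1-\eta^{(l)}_*$ is bounded away from zero \emph{uniformly in $l$}, as opposed to merely being positive at each $l$. This requires the uniform boundedness of $\|\Delta z^{i,(l)}\|$ (which follows from B1, B3, and the bound on $\|\hat r^{(l)}\|$ enforced by the ADMM stopping thresholds \eqref{epsADMMInexact1}--\eqref{epsADMMInexact2} via Theorem \ref{connectStop}), a uniform lower bound on the radius $\delta^i$ in the inverse-function-type estimate of Lemma \ref{breakdown} (which in turn rests on the Lipschitz continuity B4 of $(R^i)'$ together with the fact that the exactly-solved complementarity block contributes nothing to the nonlinearity), and the uniform positivity of $\mu^{(l)}$ and of $(s^{i,(l)})^T\lambda^{i,(l)}$ away from zero on $\Omega^i(\epsilon)$ shown at the start of the proof of Theorem \ref{notZero}. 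Assembling these uniform bounds — and checking that replacing the centralized step size by the distributed $\min_i$ does not destroy any of them, since a minimum of finitely many quantities each bounded below is bounded below — is the technical heart of the proof; everything else is a direct transcription of \cite[Thm.~3.3]{bel:98} with the subsystem index $i$ carried along.
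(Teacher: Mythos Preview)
Your overall architecture matches the paper's: both argue by contradiction, assume $\|H(z^{(l)})\|\to\kappa>0$, produce a uniform positive lower bound on $1-\eta^{(l)}_*$, and then invoke Theorem~\ref{divergent} to force $\|H(z^{(l)})\|\to 0$. The difference is in how the uniform lower bound on $1-\eta^{(l)}_*$ is obtained, and that is where your proposal has a genuine gap.

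You try to get uniformity directly from membership in $\prod_i\Omega^i(\epsilon)$: since $\|H(z^{(l)})\|\ge\epsilon$, you claim each $z^{i,(l)}\in\Omega^i(\epsilon)$, and then invoke Theorem~\ref{notZero} and the discussion after \eqref{terminate}. But the definition \eqref{omega:setinexact} of $\Omega^i(\epsilon)$ requires $\|H^i(z^{i})\|\ge\epsilon/N$, and the implication $\|H(z^{(l)})\|^2=\sum_i\|H^i(z^{i,(l)})\|^2\ge\epsilon^2\Rightarrow\|H^i(z^{i,(l)})\|\ge\epsilon/N$ for every $i$ is false in general---one block could be arbitrarily small while another compensates. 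Your parenthetical acknowledges something is off here but does not repair it. This matters downstream: the lower bound on $(s^{i})^T\lambda^{i}$ in the proof of Theorem~\ref{notZero} uses $\|H^i(z^{i})\|\ge\epsilon/N$, and the uniformity of $\delta^i$ in Lemma~\ref{breakdown} needs $\epsilon^i=(1-\beta)(1-\tilde\eta^i)\|H^i(z^i)\|/\|\tilde p^i\|$ bounded away from zero, which again needs $\|H^i(z^i)\|$ bounded below per block.

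The paper closes exactly this gap by inserting Theorem~\ref{conLimitPointTheorem}: from B2 there is a limit point $z_*$, from B3 the Jacobian $H'(z_*)$ is nonsingular, and Theorem~\ref{conLimitPointTheorem} then upgrades subsequential convergence to full convergence $z^{(l)}\to z_*$. Once all iterates are eventually in a fixed neighbourhood $N_{\delta^i}(z_*)$, the $\delta^i$ in \eqref{terminate} can be taken to be the one attached to $z_*$ and is automatically uniform in $l$; no per-block lower bound on $\|H^i\|$ is needed. So the step you omitted---passing through the limit-point convergence result---is precisely the device that makes the uniform bound on $1-\eta^{(l)}_*$ rigorous. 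Restore that invocation (or, alternatively, give an independent argument that each $\|H^i(z^{i,(l)})\|$ stays bounded away from zero under the common-step-size line search, which is not obvious) and your proof goes through.
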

\begin{proof}
The proof follows closely that of Theorem 3.3 in \cite{bel:98} and Theorem 5.2 in \cite{eis:94}. Algorithm \ref{alg:algNPD} does not break down (unless at the optimal solution) and has step sizes bounded away from zero, see Theorem \ref{notZero}, Lemma \ref{breakdown} and the complementary discussion. Furthermore, it follows from Theorem \ref{conLimitPointTheorem} that $z^{(l)}\rightarrow z_*$. 

The sequence $\{\|H(z^{(l)})\|\}$ is, by construction of the algorithm, decreasing and bounded, and consequently convergent. Assume that it converges to $\kappa >0$. So, for sufficiently large $l$ we have $z^{i,(l)}\in N_{\delta^i}(z_*)$ and the while loop of Algorithm \ref{alg:algNPD} terminates with inequality \eqref{terminate} fulfilled. Since $z^{(l)}\rightarrow z_*$, all but finitely many $l$ satisfy that $z^{i,(l)}\in N_{\delta^i}(z_*)$,  consequently $\sum_{l\geq 0}(1-\eta^{(l)}_*)$ is divergent. Hence, from Theorem \ref{divergent} we get that $H(z^{(l)}) \rightarrow 0$, which contradicts our assumption that $H(z^{(l)}) \rightarrow \kappa >0$. Thus, $\{\|H(z^{(l)})\|\}$ converges to zero.
\end{proof}

\section{ADMM, Fixed Point Iterations and Uzawa's Method}\label{sec:Appendix2}
As stated in Section \ref{sec:UMAFPI}, ADMM can be viewed as fixed point iterations and as a modified version of Uzawa's method. In this appendix, we explore these relations in detail for our specific problem formulation.
\subsection{ADMM and Fixed Point Iterations}
Consider functions $F_1(\Delta W)$ and $F_2(\Delta x)$ in \eqref{eq:ADMM}, and let us rewrite them in the form \[F_1(\Delta W)= \frac{1}{2}\Delta W^T\tilde{F}_1\Delta W+\tilde{f}_1^T\Delta W \text{ and }  F_2(\Delta x)=\tilde{f}_2^T\Delta x.\]
We can then rewrite the optimality conditions for \eqref{eq:ADMM}, in \eqref{sysOfEq}, as
\begin{equation}\label{kkt}
\underbrace{\begin{bmatrix} \tilde{F}_1 & 0 & \rho A^T  \\ 0& 0& \rho B^T\\\rho A & \rho B & 0 \end{bmatrix}}_{A_{\text{KKT}}}
\begin{bmatrix} \Delta W  \\ \Delta x \\ \Delta \bar{u}\end{bmatrix} =
\underbrace{\begin{bmatrix}-\tilde{f}_1  \\ -\tilde{f}_2\\\rho c\end{bmatrix}}_{b_{\text{KKT}}},
\end{equation}
where $\Delta \bar{u} = (\Delta \bar{v},\Delta \bar{v}_c)$. With the newly defined notation, the ADMM iterations for the problem in \eqref{eq:ADMM} can be written in closed form as
\begin{equation}\label{eq:ADMMIter}
\begin{split}
\Delta W^{(k+1)} &= M_1(-\tilde{f}_1+\rho A^Tc -\rho A^TB\Delta x^{(k)}-\rho A^T \Delta \bar{u}^{(k)}),\\
\Delta x^{(k+1)} &=M_2(-\tilde{f}_2+\!\rho B^Tc\! -\!\rho B^T\!A\Delta W^{(k+1)}\!-\!\rho B^T\! \Delta \bar{u}^{(k)}),\\
\Delta \bar{u}^{(k+1)} &=  \Delta \bar{u}^{(k)} + A \Delta W^{(k+1)} + B \Delta x^{(k+1)} - c,
\end{split}
\end{equation}
with
\begin{equation*}
M_1=(\tilde{F}_1+\rho A^TA)^{-1} \text{ and } M_2=(\rho B^TB)^{-1}.
\end{equation*}
We can rewrite the equations in \eqref{eq:ADMMIter} in a more compact manner as
\begin{equation}\label{eq:ADMMIterSS}
\begin{split}
 \begin{bmatrix} \Delta W^{(k+1)}   \\ \Delta x^{(k+1)}  \\  \Delta \bar{u}^{(k+1)} \end{bmatrix}=G \begin{bmatrix} \Delta W^{(k)}   \\ \Delta x^{(k)} \\  \Delta \bar{u}^{(k)}\end{bmatrix}+f,
\end{split}
\end{equation}
with
\begin{equation*}
G\!=\!
\begin{bmatrix}\begin{array}{c c c}
0 & -\rho M_1 A^TB &-\rho M_1 A^T\\
0 & \rho^2M_2 B^T\!AM_1A^TB& \rho M_2 B^T\!(\rho AM_1A^T\!- I) \\
0&\rho  (\rho B M_2 B^T\!\!-I)AM_1A^T\!B  & \rho (\rho B M_2 B^T\!\!-I)AM_1 A^T\!-\rho B M_2 B^T\!\!+\!I\!\end{array}\end{bmatrix}
\end{equation*}
and
\begin{equation*}
\begin{split}
f=
\begin{bmatrix}M_1m_1\\
M_2m_2-\rho M_2B^TAM_1m_1\\
-c+AM_1m_1+B(M_2m_2-\rho M_2 B^TAM_1m_1)\\ \end{bmatrix}\!,
\end{split}
\end{equation*}
where
\begin{equation*}
m_1=-\tilde{f}_1+\rho A^Tc \text{ and } m_2=-\tilde{f}_2+\rho B^Tc.
\end{equation*}
The iterations in \eqref{eq:ADMMIterSS} clearly show that $\Delta x$ and $\Delta \bar{u}$ make up the state of the algorithm, whereas $\Delta W$ only is an intermediate result, see \cite{boyd:11}. Notice that we can view \eqref{eq:ADMMIterSS} as an iterative solver for a pre-conditioned version of the system of equations in \eqref{kkt}. That is, the iteration matrix $G$ and the vector $f$ can be expressed as
\begin{equation*}
G=I-M_{\text{PRE,1}}^{-1}A_{\text{KKT}}\textrm{ and }f = M_{\text{PRE,1}}^{-1}b_{\text{KKT}},
\end{equation*}
respectively, where $M_{\text{PRE,1}}$ is a pre-conditioner defined as
\begin{equation*}
\begin{split}
M_{\text{PRE,1}}=
\begin{bmatrix} \tilde{F}_1&-\rho A^TB&\rho A^T \\ 0&0&\rho B^T \\\rho A&\rho B &-\rho I\\ \end{bmatrix}.
\end{split}
\end{equation*}
For details regarding iterative solvers and pre-conditioners, see \cite{saa:03}. When ADMM converges, we get the fixed point iterations
\begin{equation*}
\begin{split}
 \begin{bmatrix} \Delta W \\ \Delta x \\\Delta \bar{u} \end{bmatrix}=G \begin{bmatrix} \Delta W  \\\Delta  x \\ \Delta \bar{u}\end{bmatrix}+f \Leftrightarrow
A_{\text{KKT}}\begin{bmatrix} \Delta W  \\ \Delta x \\ \Delta \bar{u}\end{bmatrix}=b_{\text{KKT}},
\end{split}
\end{equation*}
which is equal to the system of equations that we would like to solve, namely \eqref{kkt}.

\subsection{ADMM and Uzawa's Method}\label{admom}

Notice that solving \eqref{kkt} is equivalent to finding a saddle point of the Lagrangian function
\begin{align*}
\begin{split}
\mathcal{L}(\Delta W,\Delta x,\Delta \bar{u}) =  F_1(\Delta W)+F_2(\Delta x)+\rho \Delta \bar u^T(AW+Bx-c),
\end{split}
\end{align*}
which also is a saddle point of the augmented Lagrangian
\begin{align*}
\begin{split}
\mathcal{L}_{\rho}(\Delta W,\Delta x,\Delta \bar{u}) =   F_1(\Delta W)+F_2(\Delta x)+\frac{\rho}{2}\|A\Delta W+B\Delta x-c+\Delta \bar{u}\|_2^2.
\end{split}
\end{align*}
This augmented Lagrangian function is in turn the Lagrangian function of the optimization problem
\begin{equation}\label{eq:ADMMmod}
\begin{split}
\minimize_{\Delta S, \Delta x}  & \quad  F_1 (\Delta W) + F_2(\Delta x)+\frac{\rho}{2}\|A\Delta W+B\Delta x-c\|_2^2, \\
\subject & \quad  A \Delta W + B\Delta x = c,
\end{split}
\end{equation}
which is equivalent to \eqref{eq:ADMM}. ADMM applied to \eqref{eq:ADMM} is equivalent to Uzawa's method applied to \eqref{eq:ADMMmod} with one Gauss-Seidel iteration \citep{saa:03} in the update of the primal variables \citep{mar:75,gab:76} and the relaxation parameter equal to the penalty parameter $\rho$. This can be seen by first noting that the optimality conditions of \eqref{eq:ADMMmod}, i.e.,
\begin{equation}\label{augkkt}
\underbrace{\begin{bmatrix} M_1^{-1} & \rho A^TB & \rho A^T  \\ \rho B^TA& M_2^{-1}& \rho B^T\\\rho A & \rho B & 0 \end{bmatrix}}_{A_{\text{KKT}}^{\text{PRE,2}}}
\begin{bmatrix} \Delta W  \\ \Delta x \\ \Delta \bar{u}\end{bmatrix} =
\underbrace{\begin{bmatrix}m_1  \\ m_2\\\rho c\end{bmatrix}}_{b_{\text{KKT}}^{\text{PRE,2}}},
\end{equation}
are equivalent to those of Problem \eqref{eq:ADMM}. In fact, the system of equations \eqref{augkkt} is a preconditioned version of \eqref{kkt}, that is, $A_{\text{KKT}}^{\text{PRE,2}}=M_{\text{PRE,2}}^{-1}A_{\text{KKT}}$ and $b_{\text{KKT}}^{\text{PRE,2}}=M_{\text{PRE,2}}^{-1}b_{\text{KKT}}$,
with
\begin{align*}
M_{\text{PRE,2}} = \begin{bmatrix} I & 0 & -A^T  \\ 0& I& -B^T\\0 & 0 & I \end{bmatrix}.
\end{align*}
Uzawa's method minimizes the Lagrangian function by iteratively first minimizing with respect to primal variables and then with respect to dual variables. In ADMM, however, the minimization with respect to primal variables are performed sequentially over $\Delta W$ and $\Delta x$, that is, by performing one Gauss-Seidel iteration with respect to $\Delta W$ and $\Delta x$  \citep{saa:03}. The first step of Uzawa's method applied to our problem then requires solving
\begin{equation}\label{1uzawa}
\begin{bmatrix} M_1^{-1} & \rho A^TB\\ \rho B^TA& M_2^{-1}\end{bmatrix}
\begin{bmatrix} \Delta W  \\ \Delta x\end{bmatrix} =
\begin{bmatrix}m_1-\rho A^T\Delta \bar{u}^k  \\ m_2-\rho B^T\Delta \bar{u}^k\end{bmatrix},
\end{equation}
which is the closed form solution of
\begin{align*}
(\Delta W^{(k+1)},\Delta x^{(k+1)}) =& \argmin_{\Delta W,\Delta x} \left\{L_{\rho}(\Delta W,\Delta x,\Delta \bar{u}^k)  \right\}\\
=&\argmin_{\Delta W,\Delta x} \left\{\!\! F_1(\Delta W)\!\!+\!\!F_2(\Delta x)\!\!+\!\!\frac{\rho}{2}\|A\Delta W\!\!+\!\!B\Delta x\!\!-c\!\!+\!\!\Delta \bar{u}^{k}\|_2^2\right\}\!\!.
\end{align*}
The system of equations in \eqref{1uzawa} can be solved approximately using Gauss-Seidel, that is,
\begin{align}\label{gaussseidel}
\begin{split}
\begin{bmatrix} \Delta W^{k+1}  \\ \Delta x^{k+1}\end{bmatrix} =&
\begin{bmatrix} M_1^{-1} & 0\\ \rho B^T\!\!A& M_2^{-1}\end{bmatrix}^{\!-1}
\begin{bmatrix} 0 & -\rho A^TB\\ 0& 0\end{bmatrix}\!\!
\begin{bmatrix} \Delta W^{k}  \\ \Delta x^{k}\end{bmatrix}\!\!
+\\&
\begin{bmatrix} M_1^{-1} & 0\\ \rho B^T\!\!A& M_2^{-1}\end{bmatrix}^{\!-1}
\begin{bmatrix}m_1-\rho A^T\Delta \bar{u}^k  \\ m_2-\rho B^T\Delta \bar{u}^k\end{bmatrix}\\=&
\begin{bmatrix}0 & -\rho M_1 A^TB  \\
0 & \rho^2M_2 B^T\!\!AM_1 A^TB  \end{bmatrix}\!\!
\begin{bmatrix} \Delta W^{k}  \\ \Delta x^{k}\end{bmatrix}
+\\&
\begin{bmatrix}M_1(m_1-\rho A^T\Delta \bar{u}^k)\\
M_2(m_2\!-\!\rho B^T\!\Delta \bar{u}^k)-\rho M_2B^T\!\!AM_1(m_1\!-\!\rho A^T\!\Delta \bar{u}^k)\end{bmatrix}.
\end{split}
\end{align}
Notice that the iteration in \eqref{gaussseidel} is equivalent to the primal updates \eqref{eq:ADMMIterSS} in ADMM. The fact that the dual update in Uzawa's method is equivalent to that of ADMM then shows the equivalence. \\

\end{document}